\documentclass[12pt]{article}

\usepackage{graphicx}
\usepackage{latexsym,amssymb}
\usepackage{amsthm}
\usepackage{indentfirst}
\usepackage{amsmath}
\usepackage{color}
\usepackage{xcolor}
\usepackage{fourier}
\usepackage[colorlinks=true,backref=page]{hyperref}

\usepackage{float}

\newtheorem{theoremalph}{Theorem}

\newtheorem*{Main Theorem}{Main Theorem}
\newtheorem{Coro}[theoremalph]{Corollary}
\newtheorem{Theorem}{Theorem}[section]
\newtheorem*{Theorem A}{Theorem A}
\newtheorem*{Theorem A'}{Theorem A'}
\newtheorem*{Theorem B'}{Theorem B'}

\newtheorem{Definition}[Theorem]{Definition}
\newtheorem{Proposition}[Theorem]{Proposition}
\newtheorem{Lemma}[Theorem]{Lemma}
\newtheorem{Example}[Theorem]{Example}

\newtheorem{Question}{Question}

\newtheorem{Remark}[Theorem]{Remark}
\newtheorem*{Remark*}{Remark}

\newtheorem{Remark-numbered}[Theorem]{Remark}

\newtheorem*{Claim}{Claim}
\newtheorem{Claim-numbered}[Theorem]{Claim}

\begin{document}

	\title{Entropy formula for surface diffeomorphisms}

	\author{Yuntao Zang\\ \small Soochow College, Soochow University, Suzhou, P.R. China\\
		\small \texttt{ytzang@suda.edu.cn}}
\date{}

	\maketitle
	\begin{abstract}

		\medskip
	Let $f$ be a $C^r$ ($r>1$) diffeomorphism on a compact surface $M$ with $h_{\rm top}(f)\geq\frac{\lambda^{+}(f)}{r}$ where $\lambda^{+}(f):=\lim_{n\to+\infty}\frac{1}{n}\max_{x\in M}\log \left\|Df^{n}_{x}\right\|$. We establish an equivalent formula for the topological entropy: $$h_{\rm top}(f)=\lim_{n\to+\infty}\frac{1}{n}\log\int_{M}\left\|Df^{n}_{x}\right\|\,dx.$$ We also characterize the topological entropy via the volume growth of curves and several applications are presented. Our approach builds on the key ideas developed in the works of Buzzi-Crovisier-Sarig (\emph{Invent. Math.}, 2022) and Burguet (\emph{Ann. Henri Poincar\'e}, 2024) concerning the continuity of the Lyapunov exponents.
	
	\medskip
	\noindent\textbf{\normalsize MSC Class :} 37A05; 37B40; 37C40; 37D25; 37E30

	\end{abstract}
	
	\tableofcontents
	\section{Introduction}
	\subsection{Statement of the main results}
	
	Let $f$ be a diffeomorphism on a compact surface $M$. Write $$\lambda^{+}(f):=\lim_{n\to+\infty}\frac{1}{n}\max_{x\in M}\log \left\|Df^{n}_{x}\right\|.$$ The sequence $\left\{\max_{x\in M}\log \left\|Df^n_x\right\|\right\}_{n\ge 1}$ is sub-additive, and hence the above limit exists.

	 For surface diffeomorphisms with large entropy, we establish an equivalent formula for the topological entropy in terms of the \emph{volume growth of the tangent cocycle} :
	\begin{theoremalph}\label{main result}
		For any $C^r$ ($r>1$) diffeomorphism $f$ on a compact surface $M$ with $h_{\rm top}(f)\geq\frac{\lambda^{+}(f)}{r}$, we have $$h_{\rm top}(f)=\lim_{n\to+\infty}\frac{1}{n}\log\int_{M}\left\|Df^{n}_{x}\right\|\,dx.$$

	\end{theoremalph}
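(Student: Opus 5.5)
We split the equality into an upper bound and a lower bound. Throughout, write $\Lambda_n:=\int_M\|Df^n_x\|\,dx$.

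\emph{Upper bound.} We will show $\limsup_n\frac1n\log\Lambda_n\le h_{\rm top}(f)$. This direction uses the hypothesis $h_{\rm top}(f)\ge\lambda^+(f)/r$ essentially. The plan is to cover $M$ by the fibres of a small-scale reparametrization. We apply a Yomdin--Gromov type $C^r$ reparametrization lemma to $f^n$ on squares (or to the graph of $Df^n$), iterated along orbits in blocks of length $p$, as in Burguet's and Buzzi--Crovisier--Sarig's semi-uniform reparametrizations. This bounds the integral of $\|Df^n\|$ over $M$ by
\begin{equation*}
\Lambda_n\lesssim \#\{(n,\epsilon)\text{-separated pieces}\}\cdot e^{n\lambda^+(f)/r+o(n)}\cdot(\text{contribution of each piece}).
\end{equation*}
A cruder route via Ruelle/Przytycki-type inequalities only gives $\limsup\frac1n\log\Lambda_n\le h_{\rm top}+\lambda^+/r$. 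To remove the extra $\lambda^+/r$ term, we will instead work measure-theoretically. Take an empirical measure built from $\|Df^n_x\|\,dx$: push forward the normalized densities $\|Df^n_x\|\,dx/\Lambda_n$ and average over $0\le k<n$, giving a limit invariant measure $\mu$. We then show $\limsup\frac1n\log\Lambda_n\le h_\mu(f)+(\text{defect})$, where by Burguet's entropy-continuity estimate the defect is bounded by $\lambda^+(f)/r$ times the escaping mass. The remaining task is to compare this defect with $h_{\rm top}$. Using $h_{\rm top}\ge\lambda^+/r$ and the convexity bound $h\le \beta h_\mu+(1-\beta)\lambda^+/r$, we get $\limsup\le h_{\rm top}$.

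\emph{Lower bound.} We will show $h_{\rm top}(f)\le\liminf_n\frac1n\log\Lambda_n$. The hypothesis is not needed for this direction in the case $h_{\rm top}>0$. Take an ergodic measure of maximal entropy $\mu$, which exists by Buzzi--Crovisier--Sarig when $h_{\rm top}>\lambda^+/r$; in the boundary case $h_{\rm top}=\lambda^+/r$ we use measures of entropy close to $h_{\rm top}$. By Ruelle, $\mu$ is hyperbolic with $\lambda^u(\mu)\ge h_\mu>0$. Katok's horseshoe theorem then gives a horseshoe $\Lambda$ with entropy at least $h_\mu-\epsilon$ and dominated splitting. Its local unstable manifolds are $C^1$ curves, and the $u$-Jacobian grows along them. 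Thicken $\Lambda$ to a positive-volume neighbourhood $U$ of stable-manifold boxes; on $U$, each point has a cone field and $\|Df^n_x\|\ge \|Df^n|_{E^u}\|$.

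Now count: $U$ contains about $e^{n(h-\epsilon)}$ Markov rectangles $R_w$ of volume roughly $\prod$(stable contraction)$\times$(unstable contraction)$^{-1}$ along the $n$-orbit. On $R_w$ we have $\|Df^n\|\approx|\det Df^n|\cdot(\text{stable contraction})^{-1}$. Summing gives
\begin{equation*}
\Lambda_n\ge\sum_w \mathrm{vol}(R_w)\,\|Df^n\|\approx\sum_w |W^u\text{-length of } f^n(R_w)|\ge e^{n(h-\epsilon)}\cdot c.
\end{equation*}
Equivalently, one can use Newhouse/Yomdin: the volume growth of a piece of unstable curve is at least $h_{\rm top}$, and then Fubini over a foliated box transports curve growth into $\int\|Df^n\|$. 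We will prefer this latter formulation; it also yields the curve characterization.

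\emph{Main obstacle.} The main obstacle is the upper bound. The difficulty is controlling the possible escape of $\|Df^n\|$-weight to measures of small entropy but large exponent, and showing that the threshold $\lambda^+/r$ exactly absorbs it. This requires the sharp Burguet/BCS reparametrization estimate, namely that the entropy defect is at most $\lambda^+/r$ times the "non-hyperbolic mass", rather than a Yomdin-type bound with a fixed error.
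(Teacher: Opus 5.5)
The upper bound $\limsup_{n}\frac{1}{n}\log\Lambda_n\le h_{\rm top}(f)$ carries the content of the theorem, and your plan has a genuine gap there. The decisive inequality $\limsup_{n}\frac{1}{n}\log\Lambda_n\le\beta\,h(f,\mu)+(1-\beta)\lambda^+(f)/r$ is asserted, not derived, and the tool you cite cannot produce it.

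Burguet's entropy-continuity estimate bounds $\limsup_k h(f,\nu_k)$ along a convergent sequence of invariant ergodic measures $\nu_k$. By contrast, $\nu_n:=\|Df^n_x\|\,dx/\Lambda_n$ is not invariant, and nothing in your plan relates $\frac{1}{n}\log\Lambda_n$ to the entropy of any invariant measure.

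A Misiurewicz-type argument does not help by itself. The identity $\log\Lambda_n=H_{\nu_n}(\mathcal{P}^n)+\sum_{P\in\mathcal{P}^n}\nu_n(P)\log\int_P\|Df^n_x\|\,dx$ leaves you with the weights of individual dynamical balls. Bounding those uniformly by Yomdin's local estimate returns exactly the sum bound $h_{\rm top}(f)+\lambda^+(f)/r$ that you yourself call crude. What is missing is a mechanism showing that local (Yomdin) growth and entropic growth occur at complementary times, together with a proof that the residual local growth at the entropic times is negligible.

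The paper supplies this in purely topological form, without a reference measure.
\begin{itemize}
\item In a chart, $\|DF^n\|\le\|\partial_1F^n\|+\|\partial_2F^n\|$ and Fubini bound the growth of $\Lambda_n$ by that of the length of $f^n$ of a single coordinate line. So Theorem~\ref{main theorem of decomposition of volume growth of derivatives} reduces to the curve statement Theorem~\ref{main theorem of decomposition of volume growth of submanifolds}, where Burguet's reparametrization lemmas apply.
\item The curve is partitioned according to its geometric times (defined through $\rho'=\rho-\frac{1}{r}\log\|Df_x\|$) and its derivative data. Proposition~\ref{grow rate for major component in finite step n} then charges $h_{\rm top}(f)$ per geometric time and $\log\|Df\|/r$ per neutral or trapping time (time spent near a periodic source). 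It also leaves a local term $\sum_k(\beta'_k-\beta''_k)/(r-1)$ over geometric times, comparing $\frac{1}{p}\log\|Df^p\|$ with the growth of the curve's tangent direction.
\item Proposition~\ref{volume growth for time n} shows this term is negligible. Limits of empirical measures along non-trapping geometric times are unstable lifts of measures with a single positive exponent. By Lemma~\ref{upper continuity of subadditive functions} the term is therefore asymptotically at most $\lambda^+(f,\mu)-\lambda(\hat f,\hat\mu)=0$.
\item The trapping times are indispensable. Without them, limit mass can sit on a periodic source, where the tangent direction need not be the top Oseledets direction and this defect does not vanish.
\item Passing to $f^q$ finally replaces $\log\|Df\|$ by $\lambda^+(f)$ and absorbs the constants.
\end{itemize}
None of these ingredients appears in your plan, so the upper bound is unproved as it stands.

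Your lower bound, by contrast, is sound and differs from the paper's. The paper (Theorem~\ref{volume of tangent cocycles bounds entropy}) combines Przytycki's estimate $\int_{B(z,n,\varepsilon)}\|Df^n_x\|\,dx\ge C\varepsilon^2 2^{-n}$, valid on a set of $\mu$-measure larger than $1/2$, with Katok's entropy formula. It then removes the $\log 2$ loss by passing to $f^q$.

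Your horseshoe version works as follows. Take a $C^1$ family of $u$-curves fully crossing a Markov rectangle of a Katok horseshoe for $f^N$ with $k$ branches, so that $f^{Nj}$ of each curve contains $k^j$ disjoint full crossings. Then apply Fubini and $\|Df^n_x\|\ge\|Df^n_x v\|$ for $v$ tangent to the curve. This has no $\log 2$ loss and needs no measure of maximal entropy. Keep this formulation rather than the rectangle-volume count, which is imprecise as written.

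You should, however, treat $h_{\rm top}(f)=0$ explicitly, since your argument needs $h(f,\mu)>0$. In that case the hypothesis forces $\lambda^+(f)=0$. Then $\|Df^n_x\|^2\ge|\det Df^n_x|$ together with $\int_M|\det Df^n_x|\,dx={\rm vol}(M)$ gives $\Lambda_n\ge{\rm vol}(M)/\max_y\|Df^n_y\|$, hence $\liminf_n\frac{1}{n}\log\Lambda_n\ge 0$. This case genuinely uses the hypothesis: for $z\mapsto z/2$ on the Riemann sphere, $\Lambda_n$ decays like $n\,2^{-n}$. The paper's argument, which needs a hyperbolic ergodic measure, passes over this case as well.
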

Let us first make some remarks about Theorem \ref{main result}.
	\begin{itemize}
		\item Note that the integral is taken with respect to the Lebesgue measure. The Lyapunov regular set (that is, the set of points with well-defined Lyapunov exponents) usually has zero Lebesgue measure and therefore does not directly contribute to the integral above. Even for ergodic volume-preserving Anosov systems-where Lebesgue almost every point has a positive Lyapunov exponent equal to the entropy of the volume measure (Lebesgue measure)-the integral appearing in Theorem~\ref{main result} does not converge to the entropy of the volume measure. Instead, it converges to the topological entropy. In this sense, for sufficiently large time $n$, it is the \emph{exceptional} points (those for which the derivative has not yet grown at the rate of the positive Lyapunov exponent up to time $n$) that dominate the integral. The order of the operations $\log$ and $\int$ is crucial (see Corollary~\ref{order of log and integral}).

		\item In the special case $r=\infty$, no additional term is added to the topological entropy, and our result thus implies the main results of Kozlovski \cite{Koz98} for surface diffeomorphisms. Also note that Theorem~\ref{main result} is formulated directly in terms of the derivative $Df^{n}_{x}$, rather than the induced map $(Df^{n}_{x})^{\wedge}$ between exterior algebras of the tangent spaces which is considered by Kozlovski \cite{Koz98}. Although the equivalence of these two quantities for evaluating the topological entropy is not apparent from their respective forms, it is nonetheless natural to work directly with $Df^{n}_{x}$ in the two-dimensional setting, after a more careful analysis. The use of the derivative $Df^{n}_{x}$ in Theorem~\ref{main result} might provide a more effective approach for numerically estimating the topological entropy.

		The extension of the formula in Theorem~\ref{main result} from the $C^{\infty}$ to the $C^{r}$ setting is far from routine and requires significant improvements in Yomdin theory. The key obstruction lies in the presence of local complexity (indicated by the Yomdin term $\frac{\lambda^{+}(f)}{r}$) . In the $C^{\infty}$ case, this obstruction disappears which drastically simplifies the problem \footnote{This is also the case for Anosov systems, where no local complexity is present, and the formula in Theorem~\ref{main result} can be proved in a relatively straightforward way, without any assumption on the topological entropy or the regularity (see, e.g., Kifer-Newhouse \cite{KiN91}, Yang-Zang \cite{YaZ23}).}. In contrast, in the $C^{r}$ setting, the persistence of a nontrivial local complexity term makes the analysis substantially more delicate. We refer to the background in Section~\ref{Backgrounds and applications} for a detailed comparison between the two settings.

		\item The threshold $\frac{\lambda^{+}(f)}{r}$ for the topological entropy is typically not expected to be attained in Yomdin-type results. For instance:
		\begin{itemize}
			\item For $C^r$ ($r>1$) surface diffeomorphisms with $h_{\rm top}(f)>\frac{\lambda^{+}(f)}{r}$, Buzzi-Crovisier-Sarig \cite{BCS22'} (see also subsequent extensions by Buzzi-Luo-Yang \cite{BLY25}, Burguet-Luo-Yang \cite{BLY25'} and Zang \cite{Zan25})  showed that there are at most finitely many ergodic measures of maximal entropy.
			\item For $C^r$ ($r>1$) surface diffeomorphisms with $h_{\rm top}(f)>\frac{\lambda^{+}(f)}{r}$, Burguet \cite{Bur24} showed if $\mu_{n}$ is a sequence of ergodic measures with $h(f,\mu_{n})\to h_{\rm top}(f)$ and $\mu_{n}\to \mu$ for some invariant measure $\mu$, then $\mu$ is a measure of maximal entropy and the positive Lyapunov exponent of $\mu_{n}$ converges to that of $\mu$.
			\item For $C^r$ ($r>1$) surface diffeomorphisms, Burguet \cite{Bur24'} also showed that if Lebesgue almost every point has maximal Lyapunov exponent strictly larger than $\frac{\lambda^{+}(f)}{r}$, then the system admits an SRB measure.
		\end{itemize}
		All these results rely on the assumption that certain invariants-such as the topological entropy or Lyapunov exponents-are \emph{strictly} larger than the threshold $\frac{\lambda^{+}(f)}{r}$. The critical value $\frac{\lambda^{+}(f)}{r}$ itself is delicate and remains poorly understood. In contrast, Theorem~\ref{main result} does not suffer from this limitation, as it concerns a formula for the topological entropy itself.
		\item \begin{itemize}
			\item The case of systems with low regularity ($r=1$) is not understood by us.
			\item The condition $h_{\rm top}(f)\geq\frac{\lambda^{+}(f)}{r}$ appears to be critical. There are examples for interval maps (see Kozlovski \cite{Koz98}) with
			$h_{\rm top}(f)<\frac{\lambda^{+}(f)}{r}$, in which the topological entropy vanishes while the volume growth of the tangent cocycle appearing in Theorem~\ref{main result} is positive. Although this does not immediately yield counterexamples in our setting of surface diffeomorphisms, we do not expect a fundamental gap between the situations of interval maps and surface diffeomorphisms \footnote{We also note that Kozlovski \cite[Page 2]{Koz98} mentioned, without providing a precise reference, that Misiurewicz constructed counterexamples for surface diffeomorphisms where the topological entropy is strictly smaller than the volume growth.}. 
			\item Recently, S. Ben Ovadia and Burguet \cite{BeB25} studied the Viana conjecture in arbitrary dimensions using a refined high-dimensional version of Yomdin theory. This work may shed light on possible extensions of Theorem~\ref{main result} to higher dimensions.
		\end{itemize}

	\end{itemize}
	
		We also provide a formula expressing the topological entropy in terms of the \emph{volume growth of the sub-manifolds} which extends the classical results in Yomdin theory \cite{Yom87} in $C^{\infty}$ setting.

	The \emph{volume} (i.e., arc length) of a curve $\sigma: [0,1]\to M$ is denoted by ${\rm Vol}\left(\sigma\right)$.
	
	\begin{theoremalph}
		\label{entropy coincides with volume growth of sub manifolds}
		For any $C^r$ ($r>1$) diffeomorphism $f$ on a compact surface $M$ with $h_{\rm top}(f)\geq\frac{\lambda^{+}(f)}{r}$, we have
		
		$$\begin{aligned}
			h_{\rm top}(f)
			&=\lim_{n\to+\infty}\frac{1}{n}\log \sup_{\sigma}{\rm Vol}\left(f^{n}\left(\sigma\right)\right)\\
			&=\sup_{\sigma}\limsup_{n\to+\infty}\frac{1}{n}\log{\rm Vol}\left(f^{n}\left(\sigma\right)\right)\\
			&=\sup_{\sigma}\liminf_{n\to+\infty}\frac{1}{n}\log{\rm Vol}\left(f^{n}\left(\sigma\right)\right)
		\end{aligned}$$	where the supremum is taken over all $C^{r}$ embedded curves $\sigma: [0,1]\to M$ with $\left\|\sigma\right\|_{C^{r}}\leq 1$ and $\left\|d_{t}\sigma\right\| \geq 1/2, \,t\in[0,1]$ \footnote{These conditions ensure that the curves are neither degenerate nor highly oscillatory and are imposed only for technical convenience. One may allow more general families.}.
	\end{theoremalph}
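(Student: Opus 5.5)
We need to prove Theorem B: topological entropy equals the volume growth of curves. Set $v^*:=\lim_{n}\frac1n\log\sup_\sigma{\rm Vol}(f^n(\sigma))$. The sequence $\sup_\sigma{\rm Vol}(f^n\sigma)$ is submultiplicative up to a constant: one cuts $f^m(\sigma)$ into boundedly many reparametrized pieces with $C^r$ norm at most $1$ per unit length, so the limit exists. We then have the chain
$$\sup_\sigma\liminf_n\tfrac1n\log{\rm Vol}(f^n\sigma)\le\sup_\sigma\limsup_n\tfrac1n\log{\rm Vol}(f^n\sigma)\le v^*,$$
and the plan is to close it with two inequalities: (i) $v^*\le h_{\rm top}(f)$, and (ii) $h_{\rm top}(f)\le \sup_\sigma\liminf_n\frac1n\log{\rm Vol}(f^n\sigma)$.

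\textbf{Upper bound (i).} I would use Yomdin reparametrization. For a fixed curve $\sigma$, the $C^r$ Yomdin--Burguet lemma covers $f^n\sigma$ by reparametrizations $\theta$ such that each $f^k\circ\sigma\circ\theta$, $0\le k\le n$, is bounded in $C^r$ at a fixed scale $\varepsilon$. The number of such maps, taken along an $(n,\varepsilon)$-Bowen ball, is at most $e^{n(\lambda^+(f)/r+\delta)}$. Each piece has length $O(1)$, so
$${\rm Vol}(f^n\sigma)\lesssim \#\{(n,\varepsilon)\text{-separated pieces}\}\cdot e^{n(\lambda^+/r+\delta)}.$$
This only gives the weak bound $v^*\le h_{\rm top}+\lambda^+/r$, which is Newhouse's estimate. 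To remove the $\lambda^+/r$ term we must use the hypothesis $h_{\rm top}\ge\lambda^+/r$. My approach is to convert volume growth into measures. Choose $\sigma_n$ nearly realizing the supremum, and let $\mu$ be a limit of the empirical measures $\frac1n\sum_k f^k_*(\text{normalized length on } f^n\sigma_n)$, or of the averaged Lebesgue measure weighted by $\|Df^n\|$ as in Theorem A. Following Buzzi--Crovisier--Sarig and Burguet, the "geometric" time-averaged measures have entropy at least $\lambda^+(\mu)$ minus the Yomdin defect. Their argument then shows that $v^*\le \int\lambda^+\,d\mu\le h(\mu)+\frac{\lambda^+(f)}r$ with a correction, and that any excess must produce an ergodic component where $h=\lambda^+$.

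This is the step I expect to be the main obstacle. What I would try first is to take Theorem A as given. Its proof presumably already controls $\int\|Df^n\|$, and volume growth is sandwiched by it: integrate ${\rm Vol}(f^n\sigma_t)$ over a smooth foliation by curves $\sigma_t$. That gives
$$\int_M\|Df^n_x\|\,dx\ \ge\ c\int{\rm Vol}(f^n\sigma_t)\,dt.$$
For the reverse bound on a single curve, I would thicken $\sigma$ into a thin strip and use bounded distortion at scale $e^{-n\lambda^+}$ coming from $C^r$ control. Since the strip has volume only $e^{-O(n)}$, this loses exactly the Yomdin term. So the cleaner route is the measure one: apply the semicontinuity of entropy minus $\lambda^+/r$ (Burguet's upper-semicontinuity estimate) to obtain $v^*\le\max(h_{\rm top},\lambda^+/r)=h_{\rm top}$.

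\textbf{Lower bound (ii).} The lower bound uses the hypothesis more mildly. If $h_{\rm top}=0$ it is trivial, because volume is bounded below by $1/2$. Otherwise, by Ruelle's inequality and Katok's horseshoe theorem applied to an ergodic measure $\nu$ with $h(\nu)>h_{\rm top}-\delta>0$, we obtain a hyperbolic horseshoe $\Lambda$ with $h_{\rm top}(f|_\Lambda)>h_{\rm top}-2\delta$. Take a short $C^r$ curve $\sigma$ transverse to the local stable lamination of $\Lambda$, normalized to the $C^r$ and speed constraints. By the $\lambda$-lemma, $f^n\sigma$ $C^1$-accumulates on the unstable manifolds of $\Lambda$ and contains at least $e^{n(h_{\rm top}-3\delta)}$ disjoint unstable segments of definite length, one for each $n$-cylinder of the Markov coding, and this holds for every large $n$. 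Hence $\liminf_n\frac1n\log{\rm Vol}(f^n\sigma)\ge h_{\rm top}-3\delta$, and letting $\delta\to0$ gives (ii). Combining (i) and (ii) closes the chain of equalities.
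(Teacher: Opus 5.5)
The genuine gap is the upper bound (i), which is the whole new content of the theorem. Your lower bound (ii) is fine, and it is a legitimately different route from the paper's. You go through Katok horseshoes and count unstable crossings of Markov rectangles. The paper instead quotes Cogswell's Pesin-theoretic bound $h(f,\mu)\le\liminf_{n}\frac1n\log{\rm Vol}(f^n W^u_{\rm loc}(x))$ for hyperbolic ergodic $\mu$ and applies the variational principle.

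None of your three routes for (i) delivers it:
\begin{itemize}
\item Taking the integral formula (your ``Theorem A'') as given is circular. In the paper its upper half is deduced \emph{from} the curve estimate by Fubini. As you observe yourself, Fubini only controls an average of ${\rm Vol}(f^n\sigma_t)$ over a foliation, never the supremum over curves.
\item Your measure sketch ends with $v^*\le\int\lambda^+\,d\mu\le h(\mu)+\lambda^+(f)/r$. That is still the additive Newhouse--Yomdin bound. The claim that ``any excess must produce an ergodic component where $h=\lambda^+$'' has no argument behind it.
\item Burguet's upper semicontinuity estimate controls $\limsup_n h(f,\mu_n)$ for \emph{invariant} measures $\mu_n$. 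What you would actually need is an inequality bounding the exponent $\int\rho\,d\hat\mu$ of a limit of non-invariant curve measures by a \emph{convex combination} of entropy and $\lambda^+(f)/r$. That is an entropy-production argument for curves, which is the problem itself, not a step you can cite.
\end{itemize}
Separately, your submultiplicativity claim fails. Cutting $f^m\sigma$ into pieces of controlled $C^r$ size needs a number of pieces not bounded by a constant times its length; that overhead is exactly what Yomdin theory measures. This does not matter, since existence of the limit follows from the squeeze once (i) and (ii) hold.

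The missing idea is the splitting of time that turns the sum into a maximum. Along each orbit the paper separates three kinds of times:
\begin{itemize}
\item geometric times, where $\sum_{i=k}^{n-1}\rho'(\hat f^i(\hat x))\ge n-k$ for all $k<n$;
\item neutral times;
\item trapping times spent near periodic sources.
\end{itemize}
Only on geometric intervals is the curve split along dynamical balls, using a spanning set plus Burguet's $n$-step reparametrization lemma; this costs $h_{\rm top}(f)$ per unit time. On neutral and trapping times only the one-step Burguet--Yomdin lemma is used, with no splitting into dynamical balls. There the pieces stay $\varepsilon_f$-bounded because these times are not geometric (respectively, because of confinement near the source), and the cost is at most $\log\|Df\|/r$ per unit time, up to constants.

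This yields $\alpha\, h_{\rm top}(f)+(1-\alpha)\log\|Df\|/r$ plus a residual local-complexity term $\sum_{k\in E}\frac{\beta'_k-\beta''_k}{r-1}$ at geometric times. The residual is negligible for the following reason. Limits of the empirical measures along geometric times are $\hat f$-invariant, and $\hat\mu$ is the \emph{unique} unstable lift of $\mu$. Hence $\int\lim_p\frac1p\log\|Df^p\|\,d\mu=\int\rho\,d\hat\mu$, via the subadditive upper-semicontinuity lemma. Uniqueness of the lift is precisely where the trapping times are needed, since mass accumulating on a periodic source would carry two positive exponents. Passing to $f^q$ then replaces $\log\|Df\|$ by $\lambda^+(f)$ and removes the constants. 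Without this decomposition, or an equivalent one, your argument only gives $v^*\le h_{\rm top}(f)+\lambda^+(f)/r$.
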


	We make some comments.
	\begin{itemize}
		\item In general, beyond our $C^{r}$ setting together with the condition $h_{\rm top}(f)\geq \frac{\lambda^{+}(f)}{r}$, it is quite subtle to establish an equivalence between the two kinds of volume growth of the sub-manifolds in Theorem \ref{entropy coincides with volume growth of sub manifolds} : $$\limsup_{n\to+\infty}\frac{1}{n}\log \sup_{\sigma}{\rm Vol}\left(f^{n}\left(\sigma\right)\right)\stackrel{?}{ = }\sup_{\sigma}\limsup_{n\to+\infty}\frac{1}{n}\log{\rm Vol}\left(f^{n}\left(\sigma\right)\right).$$ This question was implicitly raised in an unpublished manuscript of Buzzi \cite[Remark 1.2]{Buz96}.
		\item If the topological entropy is \emph{strictly} larger than $\frac{\lambda^{+}(f)}{r}$, then there exists an invariant measure $\mu$ of maximal entropy (\cite{Bur24,BCS25}). Moreover, by a result of Cogswell \cite{Cog00} (see also Zang \cite{Zan22}), the supremum in Theorem~\ref{entropy coincides with volume growth of sub manifolds} is attained by the local unstable manifold $W^{u}_{\rm loc}(x)$ for $\mu$-almost every point $x$. In fact, Corollary~\ref{volume growth w.r.t. any measure with large entropy} shows that the supremum is attained for any ergodic measure $\mu$ with $h(f,\mu)>\frac{\lambda^{+}(f)}{r}$, and not only for measures of maximal entropy as one might expect. 
		\item It is worth noting that Theorem~\ref{entropy coincides with volume growth of sub manifolds} actually implies Theorem~\ref{main result}. Indeed, by a Fubini-type argument (see the proof of Theorem~\ref{main theorem of decomposition of volume growth of derivatives} in Section~\ref{Upper bound of the volume growth rate}), there exists a constant $K>0$ such that for any $n\ge 1$, $$\int_{M}\left\|Df^{n}_{x}\right\|\,dx\leq K \cdot \sup_{\sigma}{\rm Vol}(f^n\sigma).$$ As a consequence, by Theorem \ref{entropy coincides with volume growth of sub manifolds}, we have
		
		$$\limsup_{n\to+\infty}\frac{1}{n}\log\int_{M}\left\|Df^{n}_{x}\right\|\,dx\leq\limsup_{n\to+\infty}\frac{1}{n}\log \sup_{\sigma}{\rm Vol}(f^n\sigma)=h_{\rm top}(f).$$
	
		The reverse inequality ($\geq$) follows from pure Pesin theory (see Theorem~\ref{volume of tangent cocycles bounds entropy}).
		
		However, we will directly prove Theorem~\ref{main result} using a more fundamental maximum bound, namely Theorem~\ref{main theorem of decomposition of volume growth of derivatives}. This approach offers a deeper insight into the mechanism governing the volume growth of tangent cocycle. But there is no essential difference between these two approaches, since both ultimately follow from Theorem~\ref{main theorem of decomposition of volume growth of submanifolds}.
		
	\end{itemize}

	\subsection{Background and applications}\label{Backgrounds and applications}
	\medskip
	The study of the volume growth of the tangent cocycle:
	$$\frac{1}{n}\log\int\left\|(Df^{n}_{x})^{\wedge}\right\|\,d x$$ appears to go back to Sacksteder-Shub\cite{SaS78}. Here $(Df^{n}_{x})^{\wedge}$ denotes the linear map induced by $Df^{n}_{x}$ on the exterior algebras of the tangent spaces $T_{x}M$ and $T_{f^{n}x}M$, and $\left\|\cdot\right\|$ is the operator norm induced by the Riemannian metric.

Based on Pesin theory, an inequality was obtained by Przytycki \cite{Prz80} for $C^r$ ($r>1$) diffeomorphisms and by Newhouse \cite{New88} for $C^r$ ($r>1$) maps on a compact manifold $M$ of arbitrary dimension:
\begin{equation}\label{Przytycki upper bound of entropy}
	\liminf_{n\to+\infty}\footnote{Przytycki \cite{Prz80} established the inequality in terms of a $\limsup$ and remarked at the end of the paper that it remains valid with $\liminf$.}\frac{1}{n}\log\int\left\|(Df^{n}_{x})^{\wedge}\right\|\,d x\geq h_{\rm top}(f).
\end{equation}
Subsequently, Kozlovski \cite{Koz98}(see also Klapper-L. S. Young \cite{KlY95}) showed that, for $C^\infty$ diffeomorphisms, by Yomdin theory \cite{Yom87}, a lower bound for the topological entropy is provided, i.e.,
\begin{equation}\label{Kozlovski lower bound of entropy}
	\limsup_{n\to+\infty}\frac{1}{n}\log\int\left\|(Df^{n}_{x})^{\wedge}\right\|\,d x\leq h_{\rm top}(f).
\end{equation}
 Combining (\ref{Przytycki upper bound of entropy}) and (\ref{Kozlovski lower bound of entropy}), we obtain an equality for the topological entropy. One might wonder what the $C^{r}$ version of the above formula is for finite $r$. Indeed, in the $C^{r}$ setting, one may expect an inequality that combines the topological entropy with the Yomdin term:  \begin{equation}\label{Kozlovski lower bound of entropy with Yomdin term}
 	\limsup_{n\to+\infty}\frac{1}{n}\log\int\left\|(Df^{n}_{x})^{\wedge}\right\|\,d x\leq\footnote{Although this formula is not stated explicitly in Kozlovski \cite{Koz98} or elsewhere (as far as we know), it can be deduced from the arguments in \cite{Koz98} as an application of Yomdin theory in the $C^{r}$ case.} h_{\rm top}(f)+\frac{\lambda^{+}(f)}{r}.
 \end{equation} In the $C^{\infty}$ case, the Yomdin term $\frac{\lambda^{+}(f)}{r}$ vanishes, which yields the inequality (\ref{Kozlovski lower bound of entropy}). In contrast, for finite $r$, the inequality (\ref{Kozlovski lower bound of entropy with Yomdin term}) cannot be reduced to inequality (\ref{Kozlovski lower bound of entropy}), as the Yomdin term does not vanish in general.  Rather than the \emph{additive} bound of the topological entropy and the Yomdin term appearing in (\ref{Kozlovski lower bound of entropy with Yomdin term}), we obtain a \emph{maximum} bound that yields a substantial improvement (see Theorem~\ref{main theorem of decomposition of volume growth of derivatives}):

 \medskip	\begin{equation}\label{Our lower bound of entropy convex with Yomdin term}
 \limsup_{n\to+\infty}\frac{1}{n}\log\int_{M}\left\|Df^{n}_{x}\right\|\,dx\leq \max \left\{h_{\rm top}(f),\,\, \frac{\lambda^{+}(f)}{r}\right\}.
 \end{equation}\medskip

 If $h_{\rm top}(f)\geq\frac{\lambda^{+}(f)}{r}$, inequality (\ref{Our lower bound of entropy convex with Yomdin term}) immediately implies inequality (\ref{Kozlovski lower bound of entropy}). Together with inequality (\ref{Przytycki upper bound of entropy}) which holds for any $C^r$ ($r>1$) diffeomorphism, this yields Theorem~\ref{main result}.

There are also related results in the partially hyperbolic setting (see the works of Saghin \cite{Sag14}, Yang-Zang \cite{YaZ23} and Guo-Liao-Sun-Yang \cite{GLS18}), as well as in the setting of random dynamical systems (see Ma \cite{MaX20}).

\medskip
\medskip

	Next, we present two applications (Corollary \ref{order of log and integral}, Corollary \ref{exponent less than topological entropy}) of Theorem \ref{main result}, as well as two applications (Corollary \ref{volume growth w.r.t. any measure with large entropy}, Corollary \ref{full entropy implies full volume growth}) of Theorem \ref{entropy coincides with volume growth of sub manifolds}.

	The order of the operations $\log$ and $\int$ in Theorem \ref{main result} is crucial in the following sense.

	\begin{Coro}\label{order of log and integral}
		For any $C^r$ ($r>1$) diffeomorphism $f$ on a compact surface $M$ with $h_{\rm top}(f)>\frac{\lambda^{+}(f)}{r}$ \footnote{Note that here we have to assume a strict inequality. Otherwise, Burguet's result \cite{Bur24'} does not guarantee the existence of an SRB measure.}, if $$\limsup_{n\to+\infty}\frac{1}{n}\int_{M} \log\left\|Df^{n}_{x}\right\|\,dx=\limsup_{n\to+\infty}\frac{1}{n}\log\int_{M} \left\|Df^{n}_{x}\right\|\,dx, $$ then there exists finitely many ergodic SRB measures. Moreover, each of them is a measure of maximal entropy \footnote{A measure of maximal entropy is an $f$-invariant probability measure whose metric entropy is equal to the topological entropy $h_{\rm top}(f)$.}.
	\end{Coro}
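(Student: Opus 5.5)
By Theorem~\ref{main result}, the right-hand side of the hypothesis equals $h_{\rm top}(f)$. The plan is to turn the hypothesis into a statement about Lebesgue-typical exponents and then feed it into Burguet's SRB criterion \cite{Bur24'}. Set
$$\chi(x):=\limsup_{n\to+\infty}\frac{1}{n}\log\left\|Df^{n}_{x}\right\|,\qquad a_n(x):=\frac1n\log\left\|Df^n_x\right\|.$$
The hypothesis then reads $\limsup_n\int a_n\,dx=h_{\rm top}(f)$.

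\textbf{Step 1: pointwise upper bound.} I will first show $\chi(x)\le h_{\rm top}(f)$ for Lebesgue a.e.\ $x$. Suppose instead that $\{\chi>h_{\rm top}(f)+\epsilon\}$ had positive Lebesgue measure $\delta$. By a Borel--Cantelli-type argument, infinitely many $n$ then satisfy $\mathrm{Leb}\{a_n>h_{\rm top}(f)+\epsilon/2\}\ge \delta/2$ along a subsequence. Along that subsequence
$$\int\left\|Df^n_x\right\|dx\ge \tfrac{\delta}{2}\, e^{n(h_{\rm top}+\epsilon/2)},$$
which contradicts Theorem~\ref{main result}. The delicate point is that $\limsup$ of measures of superlevel sets dominates the measure of the limsup set; I will handle this via Fatou's lemma applied to the indicators.

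\textbf{Step 2: pointwise lower bound.} The functions $a_n$ are uniformly bounded by $\max\log\|Df\|$. Reverse Fatou together with Step~1 gives
$$h_{\rm top}(f)=\limsup_n\int a_n\le\int\chi\le h_{\rm top}(f).$$
Hence $\chi(x)=h_{\rm top}(f)$ for Lebesgue a.e.\ $x$, and in particular $\chi>\lambda^+(f)/r$ almost everywhere, by the strict hypothesis $h_{\rm top}(f)>\lambda^{+}(f)/r$. The main obstacle lies here. Burguet's theorem \cite{Bur24'} is stated for the maximal Lyapunov exponent in his sense, and I must check that it matches $\chi$ (for surfaces it does, being the $\limsup$ of $\frac1n\log\|Df^n_x\|$). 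Burguet's result then yields ergodic SRB measures whose basins cover Lebesgue almost all of $M$.

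\textbf{Step 3: entropy and finiteness.} Let $\mu$ be an ergodic SRB measure with positive exponent $\lambda^+(\mu)$. For points $x$ in its basin of positive Lebesgue measure, the construction in \cite{Bur24'} (via empirical measures along the unstable curves) gives $\chi(x)\ge\lambda^+(\mu)$; I would alternatively argue with absolute continuity of the unstable foliation. Combined with Step~2, this gives $\lambda^+(\mu)\le h_{\rm top}(f)$. The Pesin entropy formula gives $h(f,\mu)=\lambda^+(\mu)$, while the Ruelle inequality and the variational principle give $h(f,\mu)\le h_{\rm top}(f)$. To get equality I need the reverse inequality $\lambda^+(\mu)\ge h_{\rm top}(f)$. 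I would obtain it from a.e.\ convergence: for $x$ in the basin, the exponent $\frac1n\log\|Df^n_x\|$ converges to the exponent of $\mu$ along typical directions, so $\chi(x)=\lambda^+(\mu)$, and Step~2 forces $\lambda^+(\mu)=h_{\rm top}(f)$. Thus every ergodic SRB measure is a measure of maximal entropy. Finally, by \cite{BCS22'} there are only finitely many ergodic measures of maximal entropy when $h_{\rm top}>\lambda^+/r$, so there are only finitely many SRB measures.
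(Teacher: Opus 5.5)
Your skeleton matches the paper's proof. Reverse Fatou together with Theorem~\ref{main result} gives $\int\chi\,dx\ge h_{\rm top}(f)$, and one then shows $\chi=h_{\rm top}(f)$ Lebesgue-a.e. Burguet's theorem \cite{Bur24'} produces ergodic SRB measures whose basins cover Lebesgue-a.e.\ point. The identity $h(f,\mu)=\lambda^{+}(f,\mu)$ for SRB measures makes them measures of maximal entropy, and \cite{BCS22'} gives finiteness.

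The one real difference is how you get $\chi\le h_{\rm top}(f)$ a.e. The paper argues by contradiction through Burguet: a positive-measure set with $\chi>h_{\rm top}(f)>\lambda^{+}(f)/r$ would yield an SRB measure whose exponent, hence entropy, exceeds $h_{\rm top}(f)$. That route only needs the Przytycki direction of Theorem~\ref{main result}. You instead use the upper bound of Theorem~\ref{main result} directly, which is exactly Corollary~\ref{exponent less than topological entropy} and keeps SRB theory out of that step.

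\textbf{Step 1 fails as written.} Set $B_n:=\{a_n>h_{\rm top}(f)+\epsilon/2\}$. The inference from $\mathrm{Leb}(\limsup_n B_n)\ge\delta$ to $\mathrm{Leb}(B_n)\ge\delta/2$ for infinitely many $n$ is false in general. A typewriter sequence of shrinking intervals has $\limsup_n B_n$ of full measure while $\mathrm{Leb}(B_n)\to0$. Fatou applied to indicators gives the opposite inequality, $\mathrm{Leb}(\limsup_n B_n)\ge\limsup_n\mathrm{Leb}(B_n)$.

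The correct mechanism is Markov plus the first Borel--Cantelli lemma. Theorem~\ref{main result} gives $\int\|Df^n_x\|\,dx\le e^{n(h_{\rm top}(f)+\epsilon/4)}$ for large $n$. Hence $\mathrm{Leb}(B_n)\le e^{-n\epsilon/4}$, which is summable, so $\mathrm{Leb}(\limsup_n B_n)=0$. This is Lemma~\ref{exponent less than topological entropy abstract version}.

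\textbf{Step 3 has an unjustified claim.} You assert that every $x$ in the basin of $\mu$ satisfies $\chi(x)=\lambda^{+}(f,\mu)$. Membership in the basin controls empirical measures on $M$, not the growth of $\|Df^n_x\|$. Since $\chi=h_{\rm top}(f)$ already holds Lebesgue-a.e., it is enough to exhibit a set of positive Lebesgue measure where $\chi=\lambda^{+}(f,\mu)$.

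Take the union of local stable manifolds through $\mu$-typical points of an unstable leaf. It has positive volume by absolute continuity of the unstable conditionals of $\mu$ and of the \emph{stable} (not unstable) holonomies. On it, forward exponents agree with $\lambda^{+}(f,\mu)$ by Pesin theory. Alternatively, use the exponent identification in Burguet's theorem, as the paper implicitly does.

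With these two repairs your argument is complete.
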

	\begin{proof}
		By Reverse Fatou Lemma (see Lemma \ref{Reverse Fatou Lemma}) and Theorem \ref{main result},

			$$\begin{aligned}
		\int_{M} \limsup_{n\to+\infty}\frac{1}{n}\log\left\|Df^{n}_{x}\right\|\,dx
			&\geq \limsup_{n\to+\infty}\frac{1}{n}\int_{M} \log\left\|Df^{n}_{x}\right\|\,dx\\
			&=\limsup_{n\to+\infty}\frac{1}{n}\log\int_{M} \left\|Df^{n}_{x}\right\|\,dx\\
			&=\footnotemark h_{\rm top}(f)\\
			&>\frac{\lambda^{+}(f)}{r}.
		\end{aligned}$$\footnotetext{Indeed, the argument here only requires one direction (see Theorem \ref{volume of tangent cocycles bounds entropy}) of the equality in Theorem \ref{main result}.}
 We claim that for Lebesgue almost every $x$, \begin{equation}\label{leb almost every points has exponent equal to topo entropy}
		\limsup_{n\to+\infty}\frac{1}{n}\log\left\|Df^{n}_{x}\right\|= h_{\rm top}(f).
		\end{equation} Because otherwise, since the integral of the left side is no less than $h_{\rm top}(f)$, there must exist some set $A$ with positive Lebesgue measure such that $$\limsup_{n\to+\infty}\frac{1}{n}\log\left\|Df^{n}_{x}\right\| > h_{\rm top}(f)>\frac{\lambda^{+}(f)}{r},\quad\forall x\in A.$$ By Burguet's result \cite{Bur24'}, there is an ergodic SRB measure $\mu$ whose positive Lyapunov exponent is strictly larger than $h_{\rm top}(f)$. This is a contradiction since the positive exponent of an ergodic SRB measure coincides with its entropy.

		Again, by Burguet's result \cite{Bur24'}, under condition (\ref{leb almost every points has exponent equal to topo entropy}), there are at most countably many ergodic SRB measures $\{\mu_{i}\}_{i\in \mathbb{N}}$ such that Lebesgue almost every $x$ lies in the basin of some $\mu_{i}$ and each $\mu_{i}$ carries maximal entropy.  Indeed, in our setting there are only finitely many such ergodic SRB measures, as a direct consequence of the main results of Buzzi-Crovisier-Sarig \cite{BCS22'}, which assert that there are at most finitely many ergodic measures of maximal entropy.

	\end{proof}
	\medskip
	\medskip
	
	The following result, while not new \footnote{Although this result does not appear in this exact form, Corollary \ref{exponent less than topological entropy} can also be deduced from Burguet's result \cite{Bur24'} by considering SRB measures, in a way similar to the proof of Corollary \ref{order of log and integral}. Here we give a technically lighter proof using Theorem \ref{main result}.}, is another interesting consequence of Theorem~\ref{main result}. Denote the usual \emph{maximal Lyapunov exponent} of $x$ by $$\lambda^{+}(f,x):=\limsup_{n\to+\infty}\frac{1}{n}\log\left\|Df^{n}_{x}\right\| .$$
	\begin{Coro}\label{exponent less than topological entropy}
		For any $C^r$ ($r>1$) diffeomorphism $f$ on a compact surface $M$ with $h_{\rm top}(f)\geq\frac{\lambda^{+}(f)}{r}$, $$\lambda^{+}(f,x)\leq h_{\rm top}(f)$$ for Lebesgue almost every point $x$. 
	\end{Coro}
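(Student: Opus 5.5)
We argue by contradiction, using only the upper bound in Theorem~\ref{main result}, namely
$$\limsup_{n\to+\infty}\frac{1}{n}\log\int_M\|Df^n_x\|\,dx\le h_{\rm top}(f).$$
Suppose the set $\{x:\lambda^{+}(f,x)>h_{\rm top}(f)\}$ has positive Lebesgue measure. Writing it as a countable union over rational $\epsilon>0$, we find some $\epsilon>0$ for which
$$A:=\{x:\lambda^{+}(f,x)>h_{\rm top}(f)+2\epsilon\}$$
has ${\rm Leb}(A)=a>0$. For each $x\in A$ there are infinitely many $n$ with $\|Df^n_x\|\ge e^{n(h_{\rm top}(f)+2\epsilon)}$.

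The subtlety is that these good times depend on $x$, and the $\limsup$ in the definition of $\lambda^{+}(f,x)$ is not uniform. We would therefore not try to find a single $n$ that is good for most of $A$. Instead, for each $N$, we set
$$A_n:=\{x:\|Df^n_x\|\ge e^{n(h_{\rm top}(f)+2\epsilon)}\}.$$
Every $x\in A$ lies in infinitely many $A_n$, so $A\subset\bigcup_{n\ge N}A_n$ for every $N$. This gives
$$a\le\sum_{n\ge N}{\rm Leb}(A_n).$$

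On the other hand, Chebyshev's inequality gives
$${\rm Leb}(A_n)\le e^{-n(h_{\rm top}(f)+2\epsilon)}\int_M\|Df^n_x\|\,dx.$$
By the upper bound from Theorem~\ref{main result}, for all $n$ large we have $\int_M\|Df^n_x\|\,dx\le e^{n(h_{\rm top}(f)+\epsilon)}$. Hence ${\rm Leb}(A_n)\le e^{-n\epsilon}$ for large $n$. The series $\sum_n{\rm Leb}(A_n)$ therefore converges, and its tails tend to $0$ as $N\to\infty$. This contradicts $a>0$; equivalently, by Borel--Cantelli, Lebesgue-a.e.\ $x$ lies in only finitely many $A_n$.

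Thus ${\rm Leb}(A)=0$ for each rational $\epsilon>0$, and taking the countable union over $\epsilon$ gives $\lambda^{+}(f,x)\le h_{\rm top}(f)$ for Lebesgue almost every $x$.

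There is no serious obstacle once Theorem~\ref{main result} is available. The only point requiring care is the non-uniformity of the $\limsup$ in $x$, and the Borel--Cantelli summation handles it, precisely because the bound on $\int_M\|Df^n_x\|\,dx$ holds for all large $n$ rather than along a subsequence. This is why the order ``$\log$ of the integral'' in Theorem~\ref{main result} is exactly what is needed. The hypothesis $h_{\rm top}(f)\ge\lambda^{+}(f)/r$ enters only through Theorem~\ref{main result} (indeed only through the inequality~(\ref{Our lower bound of entropy convex with Yomdin term})).
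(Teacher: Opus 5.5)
Your proof is correct and is essentially the paper's argument. The paper combines Theorem~\ref{main result} with Lemma~\ref{exponent less than topological entropy abstract version}, and that lemma is proved by exactly your Chebyshev plus Borel--Cantelli estimate. You have inlined the lemma and observed, correctly, that only the upper bound $\limsup_{n}\frac{1}{n}\log\int_M\|Df^n_x\|\,dx\le h_{\rm top}(f)$ is needed.
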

	\begin{proof} By a general result in measure theory (see Lemma \ref{exponent less than topological entropy abstract version}) and Theorem \ref{main result}, $$\lambda^{+}(f,x)\leq \lim_{n\to+\infty}\frac{1}{n}\log\int_{M}\left\|Df^{n}_{x}\right\|\,dx=h_{\rm top}(f)$$ for Lebesgue almost every point $x$.	
	\end{proof}
	\medskip
	\medskip
	The following Corollay \ref{volume growth w.r.t. any measure with large entropy} is an application of Theorem~\ref{entropy coincides with volume growth of sub manifolds}, showing that the maximal volume growth is attained w.r.t. any ergodic measure $\mu$ satisfying $h(f,\mu)>\frac{\lambda^{+}(f)}{r}$, rather than being restricted to measures of maximal entropy. This conclusion is somewhat unexpected. For instance, consider an SRB measure $\mu$: although its unstable manifolds carry $\mu$-typical points with full induced Lebesgue measure, their volume growth does not converge to the positive Lyapunov exponent (which coincides with the entropy of $\mu$), but instead to the topological entropy.
	
	It is instructive to compare this phenomenon with the so-called \emph{Almost Anosov} systems constructed by Hu-Young \cite{HuY95}, where Lebesgue almost every point has zero Lyapunov exponents, yet a measure of maximal entropy still exists. In particular, a curve along which Lebesgue almost every point has zero Lyapunov exponents may nevertheless exhibit maximal volume growth, equal to the topological entropy. On the other hand, it is not hard to see: a curve along which Lebesgue almost every point has positive Lyapunov exponents cannot have zero volume growth.
	
	The proof of Corollary~\ref{volume growth w.r.t. any measure with large entropy} relies on background from homoclinic relations (see \cite{BCS22'}) and Pesin theory (see \cite{BPe07}). Here we only provide an outline of the argument, omitting the technical details.
	
		\begin{Coro}\label{volume growth w.r.t. any measure with large entropy}
		For any $C^r$ ($r>1$) transitive diffeomorphism $f$ on a compact surface $M$ and any ergodic measure $\mu$ with $h(f,\mu)>\frac{\lambda^{+}(f)}{r}$, we have for $\mu$-almost every $x$, $$h_{\rm top}(f)=\lim_{n\to+\infty}\frac{1}{n}\log {\rm Vol}\left(f^{n}\left(W^{u}_{\rm loc}(x)\right)\right)$$ where $W^{u}_{\rm loc}(x)$ denotes the local unstable manifold at $x$.
	\end{Coro}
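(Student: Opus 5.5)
The upper bound is immediate. A local unstable manifold $W^{u}_{\rm loc}(x)$ is a $C^{r}$ embedded curve, and after reparametrizing and cutting it into finitely many pieces it satisfies the normalization of Theorem~\ref{entropy coincides with volume growth of sub manifolds}. Hence
$$\limsup_{n\to+\infty}\frac{1}{n}\log{\rm Vol}\left(f^{n}\left(W^{u}_{\rm loc}(x)\right)\right)\leq h_{\rm top}(f).$$
Note also that $h_{\rm top}(f)\geq h(f,\mu)>\frac{\lambda^{+}(f)}{r}$, so Theorem~\ref{entropy coincides with volume growth of sub manifolds} applies. The whole difficulty is the lower bound for the $\liminf$.

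For the lower bound I would use homoclinic classes. Since $h(f,\mu)>0$, $\mu$ is hyperbolic of saddle type. By Katok's horseshoe theorem there is a hyperbolic periodic orbit $\mathcal{O}$ homoclinically related to $\mu$ in the sense of \cite{BCS22'}. Transitivity of $f$, together with the spectral decomposition of \cite{BCS22'} for measures with entropy above $\frac{\lambda^{+}(f)}{r}$, should then give the following. For every $\varepsilon>0$ there is an ergodic measure $\nu$ that is homoclinically related to $\mathcal{O}$ and satisfies $h(f,\nu)>h_{\rm top}(f)-\varepsilon$. The point is that near-maximal entropy is carried by measures with entropy above the threshold, and by transitivity these lie in a single homoclinic class. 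Making this step rigorous is where I expect the main obstacle. The key input is that measures with entropy close to $h_{\rm top}(f)>\frac{\lambda^{+}(f)}{r}$ are homoclinically related to one another; I would follow the argument of \cite{BCS22'} (and the continuity results of \cite{Bur24}) on why such measures are related. It may also require passing to a suitable iterate or periodic component.

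Given $\nu$, I would build a horseshoe $\Lambda_{\varepsilon}$ with $h_{\rm top}(f|_{\Lambda_{\varepsilon}})>h_{\rm top}(f)-2\varepsilon$, again using Katok, and arrange that $\Lambda_{\varepsilon}$ is homoclinically related to $\mathcal{O}$. Then for $\mu$-a.e. $x$ (a Pesin regular point), the curve $W^{u}_{\rm loc}(x)$ crosses $W^{s}(\mathcal{O})$ transversally. This follows from the definition of homoclinic relation and ergodicity, together with Pesin's absolute continuity and recurrence to a Pesin block. By the inclination lemma, some forward iterate $f^{m}(W^{u}_{\rm loc}(x))$ contains a curve $C^{1}$-close to a local unstable leaf of a point of $\Lambda_{\varepsilon}$, crossing its stable lamination.

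Such a curve, iterated $n$ times, contains about $e^{n(h_{\rm top}(f|_{\Lambda_{\varepsilon}})-\varepsilon)}$ disjoint pieces, one for each $(n,\delta)$-separated orbit segment of the horseshoe. Each piece has length bounded below by a uniform constant, because of uniform hyperbolicity on $\Lambda_{\varepsilon}$ and the Markov rectangle structure. Therefore
$$\liminf_{n\to+\infty}\frac{1}{n}\log{\rm Vol}\left(f^{n}\left(W^{u}_{\rm loc}(x)\right)\right)\geq h_{\rm top}(f)-3\varepsilon.$$
Letting $\varepsilon\to0$ along a countable sequence, and intersecting the corresponding full-measure sets, gives the lower bound for $\mu$-a.e. $x$. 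Combined with the upper bound, this shows the limit exists and equals $h_{\rm top}(f)$.
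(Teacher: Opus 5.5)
Your proposal is correct in outline and rests on the same structural input as the paper: the result of \cite{BCS22'} that, for transitive $f$, all ergodic measures with entropy above $\frac{\lambda^{+}(f)}{r}$ are homoclinically related. Where you differ is in how the lower bound is obtained.

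The paper works with the measure of maximal entropy $\nu$, whose existence comes from \cite{Bur24} and uniqueness from \cite{BCS22'}. It applies the Pesin-theoretic inclination lemma at a recurrent $\nu$-typical point $y$, which produces a curve $D$ in a forward image of $W^{u}_{\rm loc}(f^{-m_1}(x))$ that is $C^1$-close to $W^{u}_{\rm loc}(y)$. It then transports the fiber Katok estimate of \cite{Zan22} from a Pesin set $\Lambda\subset W^{u}_{\rm loc}(y)$ to $D$ by stable holonomy. This gives $h(f,\nu)=h_{\rm top}(f)$ in one step, with no approximation.

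You instead only need, for each $\varepsilon$, an ergodic $\nu$ with $h(f,\nu)>h_{\rm top}(f)-\varepsilon$. You replace it by a Katok horseshoe related to the fixed hyperbolic periodic orbit $\mathcal{O}$ and route everything through $\mathcal{O}$. This uses only the classical inclination lemma and Markov-rectangle counting in a uniformly hyperbolic set. It therefore avoids the existence of a measure of maximal entropy, Pesin holonomies, recurrence of $y$, and the fiber Katok formula, at the price of an $\varepsilon$-approximation. Your full-measure set of $x$ does not depend on $\varepsilon$, so no countable intersection is actually needed. Both routes equally yield the paper's more general inequality $h(f,\nu)\leq\liminf_{n}\frac{1}{n}\log{\rm Vol}(f^{n}(W^{u}_{\rm loc}(x)))$ for any $\nu$ homoclinically related to $\mu$.

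There are two small points to tidy.
\begin{enumerate}
\item The homoclinic relation only says that the global $W^{u}(x)$ meets $W^{s}(\mathcal{O})$, hence that $W^{u}_{\rm loc}(f^{-m}(x))$ does for large $m$. Passing to $W^{u}_{\rm loc}(x)$ itself is exactly the ergodicity/recurrence argument that the paper packages as Lemma~\ref{simple property of ergodicity}; Pesin absolute continuity plays no role there.
\item For the upper bound, cutting the short curve $W^{u}_{\rm loc}(x)$ into pieces cannot produce curves with $\left\|d_{t}\sigma\right\|\geq 1/2$. You should instead embed $W^{u}_{\rm loc}(x)$ in a curve of the admissible family, as the paper's footnote does.
\end{enumerate}
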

	\begin{proof}[Outline of the proof]
		Since $f$ is transitive, by \cite[Theorem 6.12]{BCS22'}, there is a unique homoclinic class supports all ergodic measures $\mu$ with $h(f,\mu)>\frac{\lambda^{+}(f)}{r}$. Moreover, this homoclinic class  supports a unique ergodic measure $\nu$ of maximal entropy \footnote{The existence of measures of maximal entropy for the $C^{r}$ case is established by Burguet \cite[Corollary 1]{Bur24} and the uniqueness is established by Buzzi-Crovisier-Sarig \cite[Main Theorem Revisited]{BCS22'}.}. Hence $\mu$ must be homoclinically related to $\nu$, i.e., there are two subsets $R_{\mu}, R_{\nu}$ with $\mu(R_{\mu}), \nu(R_{\nu})>0$ such that for any $x\in R_{\mu}, y\in R_{\nu}$, the unstable manifold $W^{u}(x)$ intersects the stable manifold $W^{s}(y)$ transversely at some points, and likewise $W^{s}(x)$ intersects $W^{u}(y)$ transversely at some points. Note that for $\nu$-almost every $y\in R_{\nu}$, $$\nu_{y}\left(R_{\nu}\right)>0$$ where $\nu_{y}$ denotes the conditional measure \footnote{In fact, conditional measures are defined with respect to \emph{subordinate partitions} and we disregard this point here. See Ledrappier-Young \cite{LeY85} for further details.} of $\nu$ on $W^{u}_{\rm loc}(y)$. We fix any such $y\in R_{\nu}$. Let $z$ be a transverse point of $W^{u}(x)$ and  $W^{s}(y)$. Let $m_{1}\in\mathbb{N}$ be large enough such that $$z\in f^{m_{1}}\left(W^{u}_{\rm loc}\left(f^{-m_{1}}\left(x\right)\right)\right).$$ By Inclination Lemma \footnote{The Inclination Lemma \cite[Lemma 2.7]{BCS22'} requires $y$ to be \emph{recurrent}. As $\nu$-almost every point in $R_{\nu}$ is recurrent (see \cite[Section 2.3(e),(f)]{BCS22'}), we may assume without loss of generality that $y$ is recurrent.}, there are $m_{2}\in\mathbb{N}$ and some curve $D$ with $$z\in f^{-m_{2}}\left(D\right)\subset f^{m_{1}}\left(W^{u}_{\rm loc}\left(f^{-m_{1}}\left(x\right)\right)\right)$$ such that $D$ is sufficiently $C^{1}$-close to $W^{u}_{\rm loc}(y)$. Consequently, the stable holonomy map $\pi_{s}:\Lambda\to D$ is well defined on some Pesin set \footnote{Here $\Lambda$ refers to the standard Pesin set, where points admit stable and unstable manifolds with uniform size, and the angle between the stable and unstable directions is uniformly bounded away from zero. See \cite{BPe07} for details on Pesin theory.} $\Lambda\subset W^{u}_{\rm loc}(y)$ with $\nu_{y}(\Lambda)>0$. See Figure \ref{holonomyonD}.
		
			\begin{figure}[htbp]
			\centering
			\includegraphics[width=0.85\textwidth]{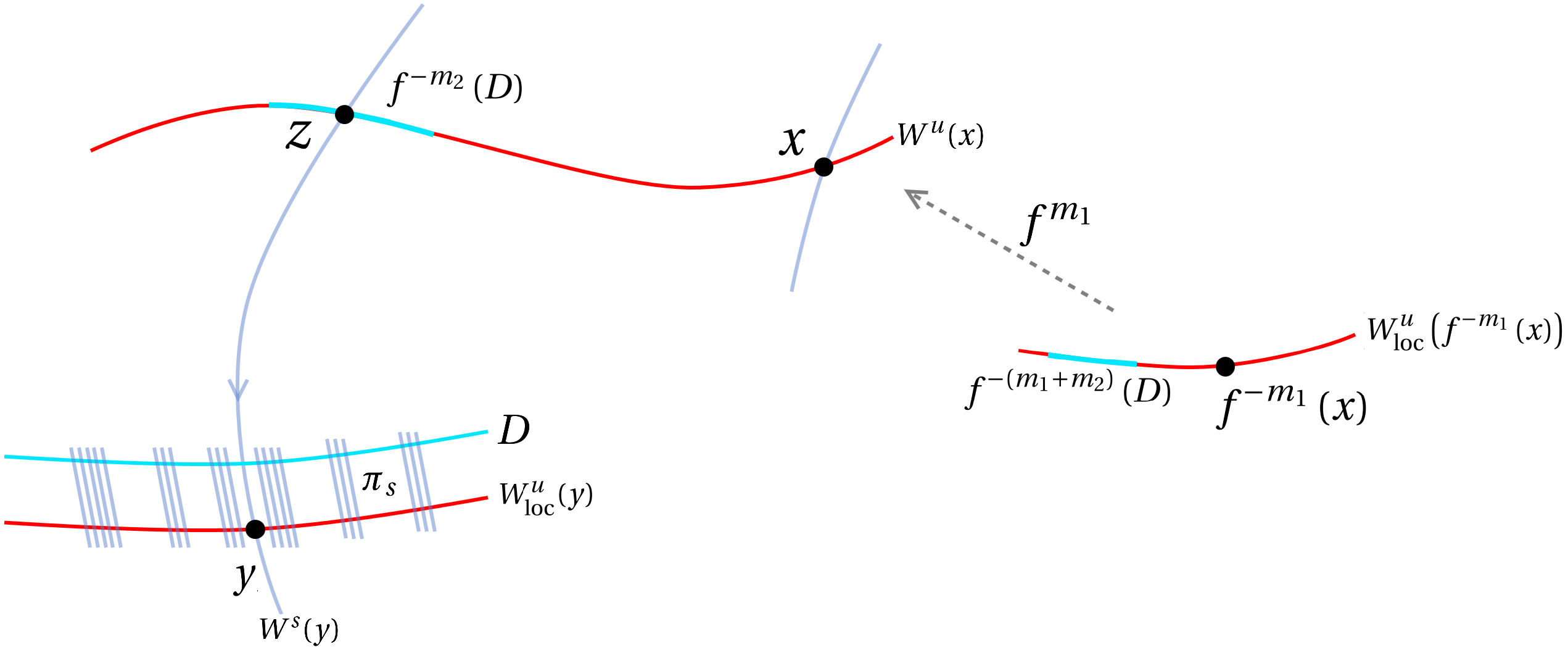}
			\caption{The stable holonomy map $\pi_{s}$}
			\label{holonomyonD}
		\end{figure}
		
		Let us first briefly recall the argument in \cite{Zan22} (see also \cite{Cog00}, \cite{New88} for similar ideas) establishing 
		\begin{equation}\label{result of zang}
			h(f,\nu)\leq \liminf_{n\to+\infty}\frac{1}{n}\log {\rm Vol}\left(f^{n}\left(W^{u}_{\rm loc}(y)\right)\right).
		\end{equation} The fact that $\nu_{y}(\Lambda)>0$ is what ensures sufficient volume growth. Given any small $\varepsilon>0$ and any large $n\in\mathbb{N}$, since $\nu_{y}\left(\Lambda\right)>0$, by a fiber-version of Katok's entropy formula (see \cite[Theorem A (1)]{Zan22}), there is a maximal $(n,\varepsilon)$-separated subset $\Omega_{n,\varepsilon}$ of $\Lambda$ with $$\#\Omega_{n,\varepsilon}\gtrsim e^{n\cdot \left(h(f,\nu)-\varepsilon\right)}$$ such that for any $w\in\Omega_{n,\varepsilon}$, $${\rm Vol}\left(f^{n}\left(B^{u}(w,n,\varepsilon)\right)\right)\gtrsim e^{-n\varepsilon}$$ where $B^{u}(w,n,\varepsilon)$ denotes the unstable dynamical ball, $$B^{u}(w,n,\varepsilon):=\left\{w'\in W^{u}_{\rm loc}(w):\ d\left(f^{k}(w'),f^{k}(w)\right)<\varepsilon \ \text{ for all } 0\le k<n\right\}.$$ As a consequence, $${\rm Vol}\left(f^{n}\left(W^{u}_{\rm loc}(y)\right)\right)\gtrsim \#\Omega_{n,\varepsilon}\cdot \min_{w\in\Omega_{n,\varepsilon}} {\rm Vol}\left(f^{n}\left(B^{u}(w,n,\varepsilon)\right)\right)\gtrsim e^{n\cdot \left(h(f,\nu)-2\varepsilon\right)}$$ which proves (\ref{result of zang}) by the arbitrariness of $\varepsilon$. The key observation is that the same argument applies with $D$ in place of $W^{u}_{\rm loc}(y)$. Indeed, since $D$ is connected to $W^{u}_{\rm loc}(y)$ through the stable holonomy on $\Lambda$, it follows that, for any large $n$ and any $w\in\Lambda$, the point $f^{n}\left(\pi_{s}(w)\right)$ remains inside the Pesin chart centered at $f^{n}(w)$. As a consequence, $${\rm Vol}\left(f^{n}\left(B^{u}\left(\pi_{s}(w),n,\varepsilon\right)\right)\right)\approx{\rm Vol}\left(f^{n}\left(B^{u}(w,n,\varepsilon)\right)\right)\gtrsim e^{-n\varepsilon}.$$ Hence $${\rm Vol}\left(f^{n}\left(D\right)\right)\gtrsim \#\Omega_{n,\varepsilon}\cdot \min_{w\in\Omega_{n,\varepsilon}} {\rm Vol}\left(f^{n}\left(B^{u}\left(\pi_{s}(w),n,\varepsilon\right)\right)\right)\gtrsim e^{n\cdot \left(h(f,\nu)-2\varepsilon\right)}.$$ This shows for any $x\in R_{\mu}$, $$h_{\rm top}(f)=h(f,\nu)\leq\liminf_{n\to+\infty}\frac{1}{n}\log {\rm Vol}\left(f^{n}\left(D\right)\right)\leq\liminf_{n\to+\infty}\frac{1}{n}\log {\rm Vol}\left(f^{n}\left(W^{u}_{\rm loc}\left(f^{-m_{1}}\left(x\right)\right)\right)\right).$$ By Theorem \ref{entropy coincides with volume growth of sub manifolds} \footnote{Any local unstable manifold is contained in a curve from the family in Theorem~\ref{entropy coincides with volume growth of sub manifolds}, as local unstable manifolds are nearly straight at small scales.}, $$h_{\rm top}(f)=\lim_{n\to+\infty}\frac{1}{n}\log {\rm Vol}\left(f^{n}\left(W^{u}_{\rm loc}\left(f^{-m_{1}}\left(x\right)\right)\right)\right).$$ By ergodicity and the backward contraction of the local unstable manifolds, this equality above in fact holds for $\mu$-almost every $x$ without passing to backward iterates (see Lemma~\ref{simple property of ergodicity}): $$h_{\rm top}(f)=\lim_{n\to+\infty}\frac{1}{n}\log {\rm Vol}\left(f^{n}\left(W^{u}_{\rm loc}(x)\right)\right).$$
		
	\end{proof}
	
The proof of Corollary~\ref{volume growth w.r.t. any measure with large entropy} in fact yields the following more general statement: if two hyperbolic ergodic measures $\mu$ and $\nu$ are homoclinically related, then for $\mu$-almost every point $x$, $$h(f,\nu)\leq\liminf_{n\to+\infty}\frac{1}{n}\log {\rm Vol}\left(f^{n}\left(W^{u}_{\rm loc}\left(x\right)\right)\right).$$ As a consequence of variational principle, for any hyperbolic ergodic measure $\mu$ supported on a homoclinic class $H(\mathcal{O})$ and for $\mu$-almost every $x$, $$h_{\rm top}\left(f, H(\mathcal{O})\right)\leq\liminf_{n\to+\infty}\frac{1}{n}\log {\rm Vol}\left(f^{n}\left(W^{u}_{\rm loc}(x)\right)\right)$$ where $h_{\rm top}\left(f, H(\mathcal{O})\right)$ denotes the topological entropy of the homoclinic class.

	\medskip
	\medskip
	
	In general, the topological entropy of a curve is bounded above by its volume growth (see Lemma \ref{volume growth geq entropy}), and the inequality can be strict. Indeed, topological entropy only reflects the number of dynamical balls, capturing the global complexity, whereas volume growth depends not only on this number but also on the geometric distortion within each dynamical ball, such as folding and bending, which represent local complexity. However, the following corollary shows that when a curve carries full topological entropy, its volume growth attains the maximal rate which coincides with the topological entropy. In this case, the contribution of local complexity inside dynamical balls becomes negligible.
	
	Write $\log^{+} x := \max\{\log x,\, 0\}$.
	\begin{Coro}\label{full entropy implies full volume growth}
		For any $C^r$ ($r>1$) diffeomorphism $f$ on a compact surface $M$ with $h_{\rm top}(f)\geq\frac{\lambda^{+}(f)}{r}$ and any $C^{r}$ embedded curve $\sigma: [0,1]\to M$ with $\left\|\sigma\right\|_{C^{r}}\leq 1$ and $\left\|d_{t}\sigma\right\| \geq 1/2, \,t\in[0,1]$, if $h_{\rm top}\left(f,\sigma\right)\footnote{$h_{\rm top}(f,\Omega)$ denotes the topological entropy of $f$ restricted to a general subset $\Omega$, defined in the usual way by $$h_{\rm top}\left(f,\Omega\right):=\lim_{\varepsilon\to 0}\limsup_{n \to +\infty}\frac{1}{n}\log\# S(n,\varepsilon,\Omega)$$ where $S(n,\varepsilon,\Omega)$ denotes a maximal $(n,\varepsilon)$-separated subset of $\Omega$ (see the book \cite{Wal82} for background).}=h_{\rm top}(f)$,  then $$h_{\rm top}(f)=\limsup_{n\to+\infty}\frac{1}{n}\log^{+} {\rm Vol}\left(f^{n}\left(\sigma\right)\right).$$
	\end{Coro}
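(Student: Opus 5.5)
The proof is a sandwich of two inequalities. The upper bound comes from Theorem~\ref{entropy coincides with volume growth of sub manifolds}, and the lower bound from Lemma~\ref{volume growth geq entropy}, which says the topological entropy of a curve is bounded by its volume growth.

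\emph{Upper bound.} Since $\sigma$ belongs to the admissible family of Theorem~\ref{entropy coincides with volume growth of sub manifolds}, for every $n$ we have
$${\rm Vol}(f^n\sigma)\le \sup_{\sigma'}{\rm Vol}(f^n\sigma').$$
Hence
$$\limsup_{n\to+\infty}\frac1n\log^{+}{\rm Vol}(f^n\sigma)\le \max\Big\{0,\ \lim_{n\to+\infty}\frac1n\log\sup_{\sigma'}{\rm Vol}(f^n\sigma')\Big\}=\max\{0,h_{\rm top}(f)\}=h_{\rm top}(f).$$
The use of $\log^{+}$ is harmless here because $h_{\rm top}(f)\ge 0$.

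\emph{Lower bound.} I would invoke Lemma~\ref{volume growth geq entropy} in the form
$$h_{\rm top}(f,\sigma)\le \limsup_{n\to+\infty}\frac1n\log^{+}{\rm Vol}(f^n\sigma).$$
Combined with the hypothesis $h_{\rm top}(f,\sigma)=h_{\rm top}(f)$, this gives the reverse inequality and proves the corollary.

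If the lemma is only available in a weaker form, I would prove this inequality directly. Fix $\varepsilon>0$ and $n$, and let $S$ be a maximal $(n,\varepsilon)$-separated subset of $\sigma$. Order the points of $S$ along the curve. For consecutive points $w,w'$, some iterate $0\le k<n$ satisfies $d(f^kw,f^kw')\ge\varepsilon$, so the arc of $\sigma$ between them has $f^k$-image of length at least $\varepsilon$. Since $f^{-1}$ is Lipschitz with constant $L$, this arc's image under $f^{n}$ also satisfies ${\rm Vol}(f^{n}(\text{arc}))\ge \varepsilon L^{-(n-k)}$, which is too weak on its own. The standard fix is to sum over $k$ instead:
$$\#S-1\le \sum_{k=0}^{n-1}\varepsilon^{-1}{\rm Vol}(f^k\sigma)+1.$$
This yields
$$\#S\lesssim n\,\varepsilon^{-1}\max_{k<n}{\rm Vol}(f^k\sigma),$$
and the polynomial factor $n$ disappears in the exponential rate. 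The $\limsup$ of $\frac1n\log\max_{k<n}{\rm Vol}(f^k\sigma)$ equals that of $\frac1n\log^{+}{\rm Vol}(f^n\sigma)$; the $\log^{+}$ takes care of the case where volumes stay bounded. Letting $n\to\infty$ and then $\varepsilon\to0$ gives the lower bound.

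The main obstacle is this counting step: keeping track of which iterate $k$ separates each consecutive pair, and bounding the number of pairs separated at time $k$ by ${\rm Vol}(f^k\sigma)/\varepsilon$ using the arcs between consecutive points. Everything else is immediate from Theorem~\ref{entropy coincides with volume growth of sub manifolds}.
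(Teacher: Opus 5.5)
Your proof is correct and matches the paper's: the lower bound is exactly Lemma~\ref{volume growth geq entropy} combined with the hypothesis $h_{\rm top}(f,\sigma)=h_{\rm top}(f)$, and the upper bound follows from Theorem~\ref{entropy coincides with volume growth of sub manifolds} since $\sigma$ belongs to the admissible family. Your fallback counting argument is a valid variant of the averaging step the paper uses to prove that lemma (Lemma~\ref{volume growth geq entropy abstract prototype}); working with $\max_{k<n}{\rm Vol}(f^k\sigma)$ also avoids needing the paper's assertion that the selected times $k_n$ tend to $+\infty$.
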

\begin{proof}
By Lemma \ref{volume growth geq entropy}, we have $$h_{\rm top}(f)=h_{\rm top}\left(f,\sigma\right)\leq \limsup_{n\to+\infty}\frac{1}{n}\log^{+} {\rm Vol}\left(f^{n}\left(\sigma\right)\right).$$ On the other hand, by Theorem \ref{entropy coincides with volume growth of sub manifolds}, the right-hand side is bounded above by $h_{\rm top}(f)$. Therefore, equality holds.
\end{proof}

\subsection{Some further questions}
We believe that the ideas of Theorem~\ref{main result} and Theorem~\ref{entropy coincides with volume growth of sub manifolds} might extend to other topological invariants (see Llibre-Saghin \cite{LlS09} for a survey of various invariants measuring dynamical complexity), such as the growth rate of hyperbolic periodic points. This has been established by Burguet~\cite{Bur20} in the $C^{\infty}$ setting and it was conjectured by Burguet in the $C^{r}$ case:

\begin{Question}[\cite{Bur20}]
	For any $C^r$ ($r>1$) diffeomorphism $f$ on a compact surface $M$ with $h_{\rm top}(f)>\frac{\lambda^{+}(f)}{r}$ and any $0<\delta< h_{\rm top}(f)$, $$h_{\rm top}(f)=\limsup_{n\to+\infty}\frac{1}{n}\log \#\mathcal{P}^{n}_{\delta}$$ where $\mathcal{P}^{n}_{\delta}$ is the set of $n$-periodic points with Lyapunov
	exponents $\delta$-away from zero.
\end{Question}

We also propose some questions related to the formula in Theorem~\ref{main result}: $$h_{\rm top}(f)=\lim_{n\to+\infty}a_{n}:=\frac{1}{n}\log\int_{M}\left\|Df^{n}_{x}\right\|\,dx.$$

\begin{Question}
	In general, does $a_{n}$ always converge ? If so, what is its limit if it is not the topological entropy ?
\end{Question}

\begin{Remark*}
	Note that $a_{n}$ might NOT be a sub-additive sequence, even when $f$ preserves the volume (i.e., when the Lebesgue measure is an invariant measure).
    
    We believe that a deeper formula underlies Theorem~\ref{main result}.
\end{Remark*}

\vspace{3mm}

\begin{Question}
Is it reasonable to ask about the rate of convergence in Theorem~\ref{main result} ?
\end{Question}

\begin{Remark*}
Note that the definition of $a_n$ already contains the factor "$\frac{1}{n}\log$". It might therefore be too ambitious to expect a specific rate of convergence (for instance, exponential). However, we do not know this convincingly.  We also note that Buzzi-Crovisier-Sarig \cite{BCS25} have recently shown that measures of maximal entropy are exponentially mixing.
\end{Remark*}

\vspace{3mm}

Denote by ${\rm Diff}^{r}(M)$ the space of all $C^{r}$ diffeomorphisms on $M$ equipped with the $C^{r}$ topology.  Define the functions $\varphi_{n}, \varphi: {\rm Diff}^{r}(M)\to \mathbb{R}$: $$\varphi_{n}\left(g\right):=\frac{1}{n}\log\int_{M}\left\|Dg^{n}_{x}\right\|\,dx,\qquad \varphi:=\limsup_{n\to+\infty} \varphi_{n}.$$
\begin{Question}
	Is $\varphi$ upper semi-continuous ?
\end{Question}
\begin{Remark*}
		Burguet \cite[Corollary 2]{Bur24} shows that the topological entropy function  $h_{\rm top}(\cdot): {\rm Diff}^{r}(M)\to \mathbb{R}$ is upper semi-continuous \footnote{$h_{\rm top}(\cdot): {\rm Diff}^{r}(M)\to \mathbb{R}$ is in fact continuous. Katok's horseshoe theorem \cite{Kat80} implies that $h_{\rm top}(\cdot)$ is lower semi-continuous with respect to the $C^{1}$ topology, and together with the above upper semi-continuity, this yields continuity.} at any $f$ with $h_{\rm top}(f)\geq\frac{\lambda^{+}(f)}{r}$. By Theorem~\ref{main result}, this also proves, in a rather indirect way, that $\varphi$ is upper semi-continuous at any $f$ with $h_{\rm top}(f)\geq \frac{\lambda^{+}(f)}{r}$. We wonder whether $\varphi$ is upper semi-continuous in general, and whether this can be established by a direct argument.
\end{Remark*}

\vspace{3mm}

\begin{Question}
Does an analogue of Theorem~\ref{main result} hold for interval maps ?
\end{Question}

\begin{Remark*}
This is known to hold for $C^{\infty}$ interval maps by the work of Kozlovski \cite{Koz98}. It remains unclear whether new obstructions arise when extending it to $C^{r}$ interval maps. In this paper, we only consider diffeomorphisms.

It is also worth noting that there have been several recent works on the continuity of Lyapunov exponents and the existence of SRB measures for interval maps (see, for example, Delplanque \cite{Del24}, Delplanque-Li \cite{DeL25}, and Li \cite{LiH24}).
\end{Remark*}

	\section{Maximum bounds for volume growth}
	In this section, we present the main results used to prove Theorem~\ref{main result} and Theorem~\ref{entropy coincides with volume growth of sub manifolds}.

\subsection{Mechanism behind the maximum bounds for volume growth}

	\begin{Theorem}\label{main theorem of decomposition of volume growth of submanifolds}
		Let $f$ be a $C^r$ ($r>1$) diffeomorphism on a compact surface $M$. We have $$\limsup_{n\to+\infty}\frac{1}{n}\log \sup_{\sigma}{\rm Vol}(f^n\sigma)\leq\max \left\{h_{\rm top}(f),\,\, \frac{\lambda^{+}(f)}{r}\right\}$$ where the supremum is taken over all $C^{r}$ embedded curves $\sigma: [0,1]\to M$ with $\left\|\sigma\right\|_{C^{r}}\leq 1$ and $\left\|d_{t}\sigma\right\| \geq 1/2, \,t\in[0,1]$.
	\end{Theorem}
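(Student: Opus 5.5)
We need to bound the volume growth of $C^r$ curves under iteration by $\max\{h_{\rm top}(f),\lambda^+(f)/r\}$, improving on the additive Yomdin bound $h_{\rm top}(f)+\lambda^+(f)/r$. The plan is to combine Yomdin reparametrization of the iterated curve with the entropy structure and Lyapunov-exponent continuity arguments of Buzzi--Crovisier--Sarig and Burguet. Throughout we argue by contradiction.

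\textbf{Setup.} Fix $\gamma>\max\{h_{\rm top}(f),\lambda^+(f)/r\}$ and suppose that along a subsequence $n_k$ there are curves $\sigma_k$ with ${\rm Vol}(f^{n_k}\sigma_k)\ge e^{n_k\gamma}$.

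\textbf{Step 1: Burguet-type reparametrization.} Fix a large scale $p$ (so $f^p$ is treated as one step) and a small $\varepsilon>0$. For each $k$, construct a family $\mathcal{F}_k$ of reparametrizations $\theta:[0,1]\to[0,1]$ covering $[0,1]$, with the following properties:
\begin{itemize}
\item every $f^{j}\circ\sigma_k\circ\theta$ ($0\le j\le n_k$) is bounded in $C^r$ at scale $\varepsilon$;
\item the cardinality of $\mathcal{F}_k$ is controlled by a \emph{combinatorial} term plus a \emph{local} term.
\end{itemize}
Each reparametrization step is indexed by the integer part of $\log\|Df^p|_{T(f^j\sigma_k)}\|$. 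The branches added by bending at a step with derivative $\approx e^{\lambda}$ number about $e^{\lambda/r}$ (up to $\varepsilon$). After passing to a subfamily of maximal contribution, this gives
$$\#\mathcal{F}_k\lesssim \#\{\text{$(n_k,\varepsilon)$-separated points}\}\cdot \exp\Big(\tfrac1r\sum_{j\in G_k}\log^+\|Df^p|_{T(f^{jp}\sigma_k)}\|\Big),$$
where $G_k$ is the set of times at which the local curvature is large ("bad" times). Since the curves are bounded, ${\rm Vol}(f^{n_k}\sigma_k)\lesssim\#\mathcal{F}_k$.

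\textbf{Step 2: an empirical measure.} The maximum rather than the sum must come from the fact that expansion along a good time contributes to the entropy, not to the Yomdin term. To capture this, pick a heavy branch and build empirical measures
$$\mu_k=\frac1{n_k}\sum_j\delta_{(f^j x,\,[T f^j\sigma_k])}$$
on the projective bundle, with $x$ drawn according to the pushforward of length on the branches. Pass to a limit $\hat\mu$, projecting to an invariant measure $\mu$.

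\textbf{Step 3: splitting the limit.} Following BCS's "neutral/expanding" dichotomy, decompose $\hat\mu=\beta\hat\mu_1+(1-\beta)\hat\mu_0$:
\begin{itemize}
\item $\hat\mu_1$ lives on the unstable direction of hyperbolic components with positive exponent $\chi$;
\item $\hat\mu_0$ collects the directions where the limit exponent is $\le0$, or where the geometric "bad" times concentrate.
\end{itemize}
Volume growth $\gamma$ forces $\int\log\|Df|_{E}\|\,d\hat\mu\ge\gamma$. Burguet's entropy estimate via the reparametrization lemma then gives
$$\gamma\le \beta\, h(f,\mu_1)+(1-\beta)\,\frac{\lambda^+(f)}{r}+O(\varepsilon).$$
The key point is that on good times the log-count of branches and the growth coincide with the entropy of the limit measure (Ledrappier--Young/Ruelle), while on bad times the growth is at most $\lambda^+/r$ per unit time. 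Since this is a convex combination, it is at most $\max\{h_{\rm top},\lambda^+/r\}+O(\varepsilon)<\gamma$, a contradiction.

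\textbf{Main obstacle.} The hardest step is the convex splitting in Step 3: showing that the bad-time contribution really carries weight $(1-\beta)$ with rate only $\lambda^+/r$, instead of stacking on top of the entropy. I would try Burguet's geometric/neutral time decomposition in his Ann.\ Henri Poincar\'e paper: define times $j$ where $f^{j}\sigma$ is "$\alpha$-hyperbolic at scale $\varepsilon$". Then:
\begin{itemize}
\item on hyperbolic times, use bounded distortion to convert length into a separated-set count;
\item on the complementary times, use only the $C^r$ Yomdin count.
\end{itemize}
Making these counts multiply with exponents weighted by the frequencies, and then taking the limit in $p$ and $\varepsilon$, is the delicate part.
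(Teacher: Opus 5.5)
Your outline names the right ingredients: Burguet's reparametrization lemma, a geometric/neutral splitting of times, empirical measures on $\hat M$, and a limit in $p$. But the step that turns Yomdin's sum bound into a maximum is exactly the one you leave unproved, and the mechanisms you propose for it would not work as stated.

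Your Step 1 multiplies the number of $(n_k,\varepsilon)$-separated points over the \emph{whole} interval $[0,n_k)$ by a local factor at bad times. That only gives $h_{\rm top}(f)+(1-\beta)\lambda^{+}(f)/r$, which is still a sum, so it cannot yield the convex bound you assert in Step 3. What you need is that no entropy is paid at neutral times. In the paper this comes from two facts used together: the contraction $|\theta'|\le e^{-(a-b+3r)/(r-1)}$ in Burguet's one-step lemma, and the failure of the geometric-time inequality (Lemma~\ref{property of geometric time}). Together they keep every reparametrized piece $\varepsilon_f$-bounded through an entire neutral stretch, so spanning sets, and hence $h_{\rm top}(f)$, are used only on geometric intervals. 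Also, the local term is not $\frac1r\log^{+}\|Df^p|_{T}\|$ at bad times only. Burguet's local term is $\frac{1}{r-1}\bigl(\lceil\log\|Df^p\|\rceil-\lceil\log\|Df^p|_{T}\|\rceil\bigr)$, and it is present at geometric times as well. It is only asymptotically negligible, and proving that is the heart of the matter; bounded distortion does not give it.

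Your justification for Step 3 goes the wrong way. You claim the branch count at good times equals the entropy of the limit measure via Ledrappier--Young/Ruelle, but Ruelle gives $h(f,\mu)\le\lambda^{+}(f,\mu)$. The reverse inequality for an arbitrary limit of length-weighted empirical measures would need an SRB-type argument you do not have.

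The paper never uses the entropy of a limit measure. It proceeds in three moves:
\begin{itemize}
\item It partitions the curve by the geometric-time set $E(\hat x)\cap[0,n)$, by the trapping time near periodic sources, and by quantized derivative data. This costs $\log 2+4\log(\log\|Df\|+\log\|Df^{-1}\|+2)$ per unit time, removed at the end by passing to $f^q$, and it ensures one time decomposition applies to every point of the heaviest piece.
\item It bounds dynamical-ball counts by $h_{\rm top}(f)$ directly.
\item It uses a limit measure only to kill the geometric local term. For a point $x_{n,p}$ of the heaviest piece this term is, up to an $O(1)$ error per unit time, $\frac{1}{r-1}\int\frac{\log\|Df^p\|-\sum_{i<p}\rho\circ\hat f^i}{p}\,d\delta_{[m_n,n)\cap E_n^L}(\hat x_{n,p})$.
\end{itemize}
By Proposition~\ref{limit measures are hyperbolic} and Lemma~\ref{upper continuity of subadditive functions}, the limit of this term (in $n$, then $L$, then $p$) is at most $\int\lambda^{+}(f,x)\,d\mu(x)-\int\rho\,d\hat\mu$. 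This vanishes only because $\hat\mu$ is the \emph{unique} unstable lift of $\mu$.

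That uniqueness fails if $\mu$ charges a periodic source: both exponents are positive there, and the weak direction gives a second positive lift. Your dichotomy (positive exponent versus exponent $\le 0$) misses this case. This is why the paper isolates trapping times near sources and charges them only $\log\|Df\|/r$.
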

\begin{Remark*} This \emph{maximum bound} improves the classical estimate in Yomdin theory \cite{Yom87}, where only the following \emph{sum bound} is obtained:
$$\limsup_{n\to+\infty}\frac{1}{n}\log \sup_{\sigma}{\rm Vol}(f^n\sigma)\leq h_{\rm top}(f)+\frac{\lambda^{+}(f)}{r}.$$
\end{Remark*}	
	\begin{Theorem}\label{main theorem of decomposition of volume growth of derivatives}
		Let $f$ be a $C^r$ ($r>1$) diffeomorphism on a compact surface $M$. We have $$\limsup_{n\to+\infty}\frac{1}{n}\log\int_{M}\left\|Df^{n}_{x}\right\|\,dx\leq\max \left\{h_{\rm top}(f),\,\, \frac{\lambda^{+}(f)}{r}\right\}.$$
	\end{Theorem}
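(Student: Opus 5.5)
The plan is to reduce Theorem~\ref{main theorem of decomposition of volume growth of derivatives} to Theorem~\ref{main theorem of decomposition of volume growth of submanifolds} by a Fubini-type argument, as announced in the introduction. The key fact is that there is a constant $K>0$, independent of $n$, such that
$$\int_{M}\left\|Df^{n}_{x}\right\|\,dx\leq K\cdot \sup_{\sigma}{\rm Vol}(f^{n}\sigma),$$
where the supremum is over the admissible family of curves. Once this holds, taking $\frac1n\log$ and $\limsup$ gives the claim immediately from Theorem~\ref{main theorem of decomposition of volume growth of submanifolds}.

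To prove the inequality, I would first cover $M$ by finitely many coordinate charts $\phi_i:(-2,2)^2\to M$, with $C^{r}$ norms bounded by a fixed constant (after rescaling), whose images of $[0,1]^2$ cover $M$. In each chart I use a finite set of directions $v_1,\dots,v_N$ in $\mathbb{R}^2$, for instance $N=2$ with $e_1,e_2$. For any linear map $A$ on $\mathbb{R}^2$ one has $\|A\|\le C(\|Ae_1\|+\|Ae_2\|)$, since $\|A\|\le \|A\|_{\rm HS}$ and the Hilbert–Schmidt norm is comparable to $\|Ae_1\|+\|Ae_2\|$. Because the charts have bounded distortion, this gives, for $x=\phi_i(s,t)$,
$$\|Df^n_x\|\le C'\bigl(\|D(f^n\circ\phi_i)_{(s,t)}e_1\|+\|D(f^n\circ\phi_i)_{(s,t)}e_2\|\bigr).$$

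Next, I would integrate over $(s,t)\in[0,1]^2$ using Fubini. For the $e_1$ term, fix $t$; then $\int_0^1\|D(f^n\circ\phi_i)_{(s,t)}e_1\|\,ds$ is exactly the length of $f^n(\sigma_t)$, where $\sigma_t(s)=\phi_i(s,t)$ is a horizontal coordinate curve. After a fixed rescaling of the charts (possibly splitting $\sigma_t$ into boundedly many pieces, each reparametrized to $[0,1]$), every $\sigma_t$ satisfies $\|\sigma_t\|_{C^r}\le1$ and $\|d\sigma_t\|\ge1/2$, so it belongs to the admissible family. The Jacobian of $\phi_i$ is bounded, so the Lebesgue integral over $\phi_i([0,1]^2)$ is bounded by a constant times $\sup_\sigma{\rm Vol}(f^n\sigma)$. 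The $e_2$ term is handled the same way with vertical curves. Summing over finitely many charts gives $K$.

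There is no real obstacle here. The only care needed is normalizing the charts so the coordinate segments satisfy the $C^r$ and nondegeneracy constraints, which costs only a multiplicative constant (a bounded number of pieces). All the depth lies in Theorem~\ref{main theorem of decomposition of volume growth of submanifolds}, which is assumed.
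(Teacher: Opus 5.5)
Your argument is correct and uses the same mechanism as the paper's proof. You bound $\|A\|$ by $\|Ae_1\|+\|Ae_2\|$, integrate along horizontal and vertical coordinate lines by Fubini, and invoke Theorem~\ref{main theorem of decomposition of volume growth of submanifolds}.

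The implementation differs in two ways.
\begin{itemize}
\item The paper first localizes, via Lemma~\ref{existence of a maximal point}, to one small chart $U_0$ around a point that carries the full growth rate. You instead sum over finitely many source charts, so you do not need the maximal-point lemma.
\item The paper computes derivatives in coordinates $F^n=\varphi_i\circ f^n\circ\varphi_0^{-1}$. It must therefore split $U_0$ into the sets $U_0^i$ according to which target chart contains $f^n(x)$, and keep the dominant one. You measure $\|D(f^n\circ\phi_i)_{(s,t)}e_j\|$ directly in the Riemannian metric on $T_{f^n(x)}M$, so the length formula for $f^n\circ\sigma_t$ holds without any target charts.
\end{itemize}
Your version is shorter. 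It yields exactly the uniform inequality $\int_M\|Df^n_x\|\,dx\le K\sup_\sigma{\rm Vol}(f^n\sigma)$ stated in the introduction.

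One detail of your normalization needs to change. The condition $\|d_t\sigma\|\ge 1/2$ on $[0,1]$ forces every admissible curve to have length at least $1/2$. So if you cut a coordinate segment into short pieces and reparametrize each to $[0,1]$, the pieces have speed below $1/2$ and are not admissible. Instead, as the paper does for $U_0$, take the charts small enough that every coordinate segment is contained in some admissible curve $\sigma$. Then ${\rm Vol}(f^n(\text{segment}))\le{\rm Vol}(f^n\sigma)$ gives the bound you need.
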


The inequalities in Theorems~\ref{main theorem of decomposition of volume growth of submanifolds} and \ref{main theorem of decomposition of volume growth of derivatives} can be equivalently expressed in a convex combination form: there is some $\alpha\in[0,1]$ such that \begin{equation}\label{convex form}
	\begin{aligned}
	\limsup_{n\to+\infty}\frac{1}{n}\log \sup_{\sigma}{\rm Vol}(f^n\sigma)\leq \alpha\cdot h_{\rm top}(f) + (1-\alpha)\cdot\frac{\lambda^{+}(f)}{r},\\
		\limsup_{n\to+\infty}\frac{1}{n}\log\int_{M}\left\|Df^{n}_{x}\right\|\,dx\leq \alpha\cdot h_{\rm top}(f) + (1-\alpha)\cdot\frac{\lambda^{+}(f)}{r}.
	\end{aligned} 
\end{equation}

The reason for presenting this convex-combination form is that our proof actually yields a more refined upper bound, in which the coefficient $\alpha\in[0,1]$ reflects how the volume growth is distributed between the entropic contribution and the Lyapunov-type contribution (the Yomdin term). In particular, the argument implicitly determines an admissible choice of $\alpha$ and thus contains strictly more information than the crude bound given by the maximum. However, making this choice explicit would require introducing a substantial amount of additional technical machinery. For this reason, we do not pursue an explicit description of $\alpha$ here in the statements. As a consequence, the statements of the theorems may be equivalently weakened to the simpler form involving $\max\{h_{\rm top}(f),\lambda^{+}(f)/r\}$.

	These convex-type upper bounds originate from the idea that dynamical invariants such as volume growth measure complexity from two different perspectives: a global one and a local one. To illustrate this idea, let us consider the volume growth of a curve $\sigma$ and write $${\rm Vol}(f,\sigma):=\limsup_{n\to+\infty}\frac{1}{n}\log {\rm Vol}(f^n\sigma).$$ At time $n$ and at scale $\varepsilon$, the volume of $f^{n}(\sigma)$ can be estimated by decomposing it into two factors:

	$${\rm Vol}(f^n\sigma)\leq \overbrace{\left(\text{number of $(n,\varepsilon)$ dynamical balls to cover $\sigma$}\right)}^{\text{Global complexity}}\times \overbrace{\max_{x}{\rm Vol}\left(f^n\left(\sigma\cap \left(B\left(x,n,\varepsilon\right)\right)\right)\right)}^{\text{Local complexity}}$$

	The global complexity is controlled by the topological entropy, while the classical Yomdin theory \cite{Yom87} shows that the local complexity grows at most at the rate $\lambda^{+}(f)/r$. Combining these two estimates yields the classical Yomdin bound:
	\begin{equation}\label{classical Yomdin bound}
		{\rm Vol}(f,\sigma)\leq h_{\rm top}(f) + \frac{\lambda^{+}(f)}{r}.
	\end{equation}

	However, this estimate is in general quite loose, since it implicitly assumes that the global and the local complexities are simultaneously present at all times.

	The key insight behind the convex bound in (\ref{convex form}) is that these two sources of complexity need not act at the same time. Instead, the time interval $[0,n)$ can be decomposed into two major types of periods:
	\begin{itemize}
		\item Geometric times (with proportion $\alpha$): during which the curve stretches across the manifold in a globally expanding and essentially straight manner;
		\item Neutral times (with proportion $1-\alpha$): during which the curve remains confined inside a single dynamical ball and develops complexity only through local folding.
	\end{itemize}

	\begin{figure}[htbp]
		\centering
		\includegraphics[width=1\textwidth]{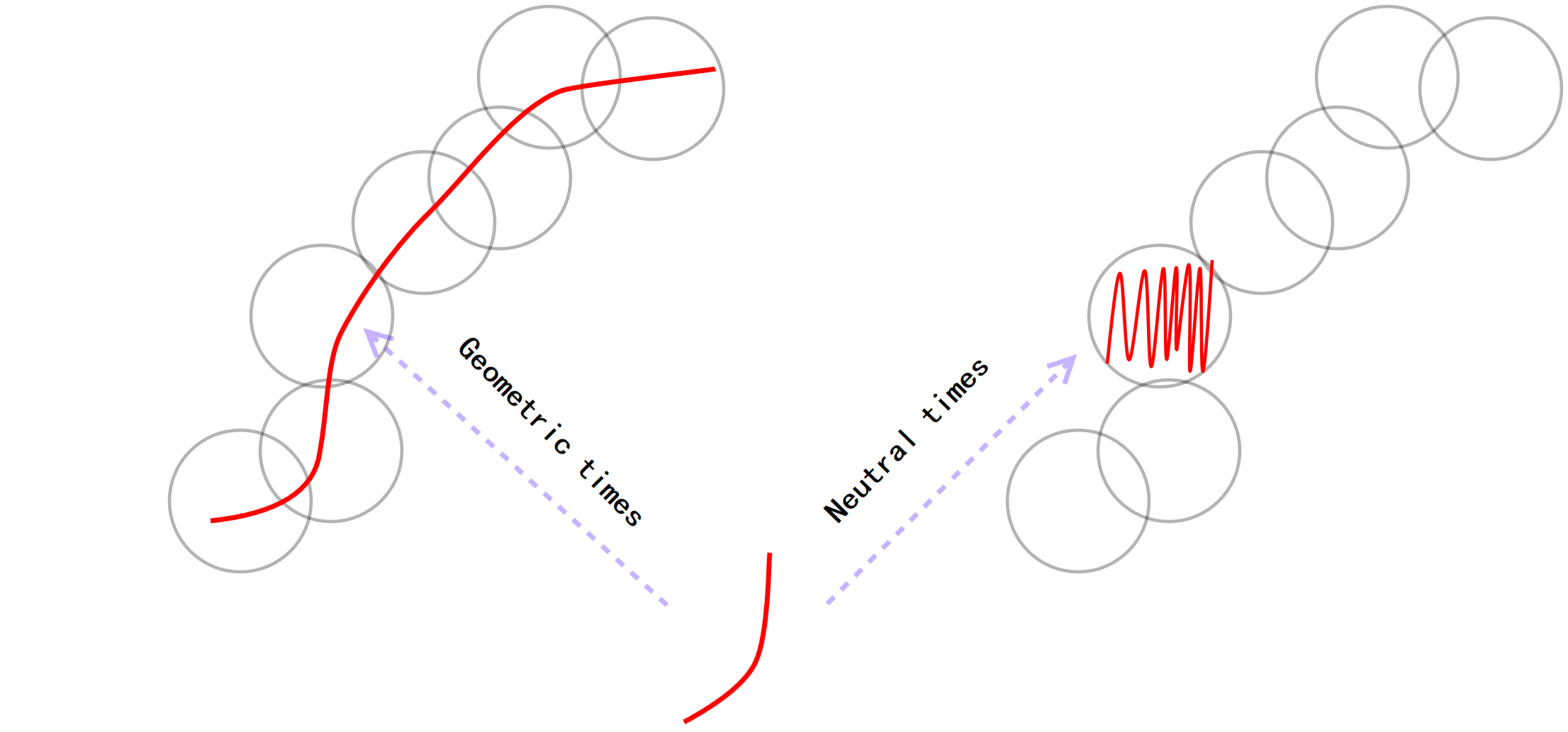}
		\caption{Growth at geometric and neutral times}
		\label{Pic expansion at Geometric and Neutral times}
	\end{figure}

	During geometric times, one only needs to count the number of dynamical balls that the curve visits, which contributes an entropy-type growth, while no significant local complexity is generated inside each dynamical ball.
	In contrast, during neutral times, the curve stays inside a single dynamical ball, so the global complexity vanishes, and the growth is entirely governed by local effects, captured by the Yomdin term $\lambda^{+}(f)/r$. See Figure \ref{Pic expansion at Geometric and Neutral times}.

	This separation of time scales explains the convex structure of the resulting bound and clarifies the mechanism responsible for the improvement over the classical Yomdin estimate. As a consequence, the classical Yomdin bound \eqref{classical Yomdin bound} can be refined to the following inequality:  $${\rm Vol}(f,\sigma)\leq \alpha\cdot h_{\rm top}(f) + (1-\alpha)\cdot\frac{\lambda^{+}(f)}{r}.$$

The separation of time into geometric and neutral components was first introduced by Buzzi-Crovisier-Sarig \cite{BCS22} and was further developed and systematized by Burguet \cite{Bur24}. This idea has since been deepened and applied in several related contexts; see, for instance, \cite{BLY25', BLY25, LuY25}.

We emphasize that all these works are formulated in the presence of a reference invariant measure, and the quantities under consideration are measure-theoretic invariants, such as metric entropy, Lyapunov exponents. In contrast, our approach focuses on purely topological quantities, namely the volume growth of the sub-manifolds and of the tangent cocycle. The absence of a reference measure leads to several substantial technical difficulties. We mention here two of the main ones.

\begin{itemize}
	\item \emph{The presence of periodic sources \footnote{A periodic point $p$ is called a \emph{periodic source} if both Lyapunov exponents of $p$ are positive.}.}
	When a reference hyperbolic measure $\mu$ is given, the analysis may be restricted to $\mu$-typical points, which are not influenced by periodic sources. In our setting, however, when considering, for example, the volume growth of a curve $\sigma$, a large portion of points in $\sigma$ may spend a long time near periodic sources. During such times, which still correspond to geometric times, the curve needs time to escape from the "trapping region" of these sources. As a consequence, one must carefully control the amount of topological entropy produced during these geometric times, avoiding an unnecessary decomposition of the curve into exponentially many pieces before it has moved away from periodic sources. To address this issue, we introduce a finer decomposition of time and define the notion of \emph{trapping times}, which allows us to prevent an excessive creation of entropy (see Proposition \ref{grow rate for major component in finite step n}).

	\item \emph{Lack of uniform control on the local complexity at geometric times}
	Without a reference measure, it becomes more delicate to justify that no local complexity is generated during geometric times in a uniform way. We overcome this difficulty by exploiting several partitions of the curve, as introduced in Proposition~\ref{volume growth for time n}, together with an upper semi-continuity property for sub-additive sequences, which yields the required control.
\end{itemize}

	\subsection{Proofs of Theorem \ref{main result} and Theorem \ref{entropy coincides with volume growth of sub manifolds}}\label{proof of main theorems}

	We now directly apply the two bounds established above in Theorem \ref{main theorem of decomposition of volume growth of submanifolds} and \ref{main theorem of decomposition of volume growth of derivatives} to prove Theorem \ref{main result} and Theorem \ref{entropy coincides with volume growth of sub manifolds}, in order to provide the reader with a clear overview of the global framework.

	\begin{proof}[Proof of Theorem \ref{main result}]

	The inequality
	$$ h_{\rm top}(f)\leq\liminf_{n\to+\infty}\frac{1}{n}\log\int_{M}\left\|Df^{n}_{x}\right\|\,d x$$ is, in a certain sense, already well known (see Przytycki \cite{Prz80}, Kozlovski \cite{Koz98}). Its proof relies only on the $C^r$ ($r>1$) regularity and follows as a consequence of Pesin theory. The only distinction from the classical formulations is that we work directly with the derivative $Df^{n}_{x}$, rather than with the induced map $(Df^{n}_{x})^{\wedge}$ acting on the exterior algebras of the tangent spaces. This difference is natural and expected in our setting, since we restrict ourselves to dimension two. For the reader's convenience and completeness, we reproduce this part of the argument in Theorem~\ref{volume of tangent cocycles bounds entropy}.

	The reverse inequality is provided by Theorem \ref{main theorem of decomposition of volume growth of derivatives}: $$\limsup_{n\to+\infty}\frac{1}{n}\log\int_{M}\left\|Df^{n}_{x}\right\|\,dx\leq \max \left\{h_{\rm top}(f),\,\, \frac{\lambda^{+}(f)}{r}\right\}=h_{\rm top}(f).$$

	The proof of Theorem \ref{main result} is complete.
	\end{proof}

	\begin{proof}[Proof of Theorem \ref{entropy coincides with volume growth of sub manifolds}]

		In the following arguments, the notation $\sup_{\sigma}$ refers to the supremum taken over all
		$C^{r}$ embedded curves $\sigma:[0,1]\to M$ with
		$\left\|\sigma\right\|_{C^{r}}\le 1$ and $\left\|d_{t}\sigma\right\| \geq 1/2, \,t\in[0,1]$.

		By the work of Cogswell \cite{Cog00} (see also Zang \cite{Zan22}), based purely on Pesin theory, for any hyperbolic ergodic measure $\mu$ and for $\mu$-almost every point $x$, $$h(f,\mu)\leq \liminf_{n\to+\infty}\frac{1}{n}\log {\rm Vol}\left(f^{n}\left(W^{u}_{\rm loc}(x)\right)\right)\leq\sup_{\sigma}\liminf_{n\to+\infty}\frac{1}{n}\log {\rm Vol}\left(f^{n}\left(\sigma\right)\right)$$ where $W^{u}_{\rm loc}(x)$ denotes the local unstable manifold at $x$.

		Hence, by the variational principle,

		$$h_{\rm top}(f)\leq\sup_{\sigma}\liminf_{n\to+\infty}\frac{1}{n}\log {\rm Vol}\left(f^{n}\left(\sigma\right)\right)\leq\liminf_{n\to+\infty}\frac{1}{n}\log\sup_{\sigma}{\rm Vol}\left(f^n(\sigma)\right).$$

		The reverse inequality is provided by Theorem \ref{main theorem of decomposition of volume growth of submanifolds},

		 $$\begin{aligned}
		\sup_{\sigma}\limsup_{n\to+\infty}\frac{1}{n}\log {\rm Vol}\left(f^{n}\left(\sigma\right)\right)
			&\leq \limsup_{n\to+\infty}\frac{1}{n}\log\sup_{\sigma}{\rm Vol}\left(f^n(\sigma)\right)\\
			&\leq \max \left\{h_{\rm top}(f),\,\, \frac{\lambda^{+}(f)}{r}\right\}\\
			&=h_{\rm top}(f).
		\end{aligned}$$

		The proof of Theorem \ref{entropy coincides with volume growth of sub manifolds} is complete.

	\end{proof}

	\medskip
	\medskip

	It remains to prove Theorem \ref{main theorem of decomposition of volume growth of submanifolds} (Section \ref{Volume growth of the sub-manifolds}) and Theorem \ref{main theorem of decomposition of volume growth of derivatives} (Section \ref{Volume growth of the tangent cocycle}).

\section{Preliminaries}
In this section, we collect several notions and basic facts that will be used throughout the paper.
We briefly recall Lyapunov exponents and their associated invariant splittings, and then introduce the induced dynamics on the projective tangent bundle, which provides a convenient framework for describing the measure-theoretic invariants.
\subsection{Lyapunov exponents}
The topological entropy of $f$ is denoted by $h_{\rm top}(f)$ and the metric entropy of an invariant measure $\mu$ is denoted by $h(f, \mu)$. By Oseledets Multiplicative Ergodic Theorem, there is an invariant set $\mathcal{R}$ (called the \emph{Lyapunov regular set}) with total measure (i.e., $\mu(\mathcal{R})=1$ for any invariant measure $\mu$) such that for any $x\in\mathcal{R}$, there are a splitting (called the  \emph{Oseledets splitting}) $T_{x}M=E^{1}\oplus E^{2}\oplus\cdots\oplus E^{l}$ and finitely many numbers (called the \emph{Lyapunov exponents}) $\lambda_{1}(x)>\lambda_{2}(x)>\cdots>\lambda_{l}(x)$ such that for any nonzero vector $v\in E^{j}$, we have $$\lim_{n\to \pm\infty}\frac{1}{n}\log\left\|Df^{n}_{x}(v)\right\|=\lambda_{j}.$$

In our special setting (surface diffeomorphism), given an invariant measure $\mu$, there are at most two well defined Lyapunov exponents for $\mu$-almost every $x$. The larger Lyapunov exponent $\lambda_{1}(x)$ is denoted by $\lambda^{+}(f,x)$ and it can be also defined as $$\lambda^{+}(f,x)=\lim_{n\to+\infty}\frac{1}{n}\log\left\|Df^{n}_{x}\right\|.$$ Similarly, the smaller Lyapunov exponent is denoted by $\lambda^{-}(f,x)$. We rewrite the Oseledets splitting at $x$ as $$T_{x}M=E^{+}_{x}\oplus E_{x}^{-}.$$ An invariant measure $\mu$ is called \emph{hyperbolic} if for $\mu$-a.e. $x$, one Lyapunov exponents of $x$ is positive and the other is negative. For an ergodic measure $\mu$ with $h(f,\mu)>0$, $\mu$ is hyperbolic by Ruelle's inequality.  We define $$\lambda^{+}(f, \mu):=\int\lambda^{+}(f,x) d\,\mu(x),\quad \lambda^{-}(f, \mu):=\int\lambda^{-}(f,x) d\,\mu(x).$$ If $\mu$ is ergodic, then $\lambda^{\pm}(f, \mu)=\lambda^{\pm}(f,x)$ for $\mu$-a.e. $x$.

\subsection{Tangent dynamics}\label{tangent dynamics}
With the Riemannian structure inherited from the tangent bundle $TM$, we denote the projective tangent bundle of $M$ by  $$\hat{M}:=\left\{(x, [v]):~x\in M, \,[v] \text{ is the linear subspace of $T_{x}M$ generated by the unit vector $v$}\right\} .$$ Let $\pi:\hat{M}\to M$ be the natural projection. Let $\hat{f}: \hat{M}\to \hat{M}$ be the induced map of $f$ (also called the \emph{canonical lift}) defined by $$\hat{f}(x, [v]):=\left(f(x), [Df_{x}(v)]\right).$$ If $f$ is of class $C^{r}$, then $\hat{f}$ is of class $C^{r-1}$.

Let $\rho:\hat{M}\to\mathbb{R}$ be the continuous function defined by $$\rho(x, [v]):=\log\left\|Df_{x}\left(v\right)\right\|.$$

Given an $\hat{f}$-invariant measure $\hat{\mu}$, the \emph{projection} of $\hat{\mu}$ is defined by $\mu:=\hat{\mu}\circ\pi^{-1}$ and $\hat{\mu}$ is called the \emph{lift} of $\mu$. $\hat{\mu}$ is called an \emph{unstable lift} of $\mu$ if for $\hat{\mu}$-almost every point $\hat{x}$,
\begin{equation}\label{Birkhoff average positive for hat mu}
	\lim_{n\to +\infty}\frac{1}{n}\sum_{i=0}^{n-1}\rho\left(\hat{f}^{i}\left(\hat{x}\right)\right)>0.
\end{equation}

We define $$\lambda(\hat{f},\hat{\mu}):=\int_{\hat{M}}\rho d\,\hat{\mu}.$$ We list some basic properties.

\begin{itemize}
	\item Given an $\hat{f}$-invariant measure $\hat{\mu}$, if (\ref{Birkhoff average positive for hat mu}) holds for $\hat{\mu}$-almost every point $\hat{x}$, then the Lyapunov exponent $\lambda^{+}(f,x)$ is positive for $\mu$-almost every $x$, where $\mu$ is the projection of $\hat{\mu}$. This follows from the fact that for $\hat{\mu}$-almost every point $\hat{x} = (x, [v]) \in \hat{M}$ with $\|v\| = 1$, $$ \frac{1}{n}\sum_{i=0}^{n-1}\rho\left(\hat{f}^{i}(\hat{x})\right) = \frac{1}{n}\log\left\|Df_{x}^{n}(v)\right\|. $$
	\item If $\hat{\mu}_{n}\xrightarrow{\text{weak $\ast$}}\hat{\mu}$, then $\lambda(\hat{f},\hat{\mu}_{n})\to \lambda(\hat{f},\hat{\mu})$. This is a consequence of the continuity of the function $\rho$.
	\item Let $\mu$ be an $f$-invariant measure with $\lambda^{+}(f,x)>0\geq\lambda^{-}(f,x)$ for $\mu$-almost every $x$. Then $\mu$ has a unique unstable lift $\hat{\mu}^{+}$ which can be defined by: $$\hat{\mu}^{+}:=\int_{\hat{M}}\delta_{(x, E^{+}_{x})} d\,\mu(x).$$ Moreover (refer to \cite[Proposition 3.8]{BCS25}, $$\lambda^{+}(f,\mu)=\lambda(\hat{f},\hat{\mu})\quad\text{if and only if}\quad\hat{\mu}=\hat{\mu}^{+}.$$

	For more detailed discussions on the lifted dynamics, we refer to \cite[Section 3.4]{BCS22} and the references therein.

\end{itemize}

\subsection{Notations and conventions}
\begin{itemize}
	\item Whenever we say that $\sigma:[0,1]\to M$ is a $C^{r}$ curve, we always assume that it is an embedded curve.
	\item We identify $\sigma$ with its image whenever there is no ambiguity; in particular, we freely write $x\in\sigma$.
	\item Once a curve $\sigma$ is fixed, for any $x\in\sigma$, we denote by $\hat{x}=(x,[v])\in\hat{M}$  where $v$ is the unit tangent vector of $\sigma$ at $x$.
	\item Given $\hat{x}:=(x,[v])\in \hat{M}$ where $v$ is a unit vector in $T_{x}M$, we shall slightly abuse notation and write $$Df(\hat{x}):=Df_{x}(v)\in T_{f(x)}M.$$
	\item Write $$\left\|Df\right\|:=\max_{x\in M}\left\|Df_{x}\right\|, \quad \left\|Df^{-1}\right\|:=\max_{x\in M}\left\|Df^{-1}_{x}\right\|.$$ Note that $\left\|Df\right\|, \left\|Df^{-1}\right\| \geq 1$.
	\item Throughout the paper, the notation $\sup_{\sigma}$ refers to the supremum taken over all
	$C^{r}$ embedded curves $\sigma:[0,1]\to M$ with
	$\left\|\sigma\right\|_{C^{r}}\le 1$ and $\left\|d_{t}\sigma\right\| \geq 1/2, \,t\in[0,1]$.
\end{itemize}

	\section{Volume growth of the sub-manifolds}\label{Volume growth of the sub-manifolds}
	The purpose of this section is to establish Theorem~\ref{main theorem of decomposition of volume growth of submanifolds} via two key technical results: Proposition~\ref{grow rate for major component in finite step n} and Proposition~\ref{volume growth for time n}.
	\begin{itemize}
		\item Proposition~\ref{grow rate for major component in finite step n} separates the volume growth into the entropic contribution and the Lyapunov-type contribution (the Yomdin term) via Burguet's reparametrizations, a variant of Yomdin's reparametrizations.
		\item Proposition~\ref{volume growth for time n} refines Proposition~\ref{grow rate for major component in finite step n} by controlling the error term arising from entropic contributions, more precisely, the local volume growth at geometric times in Proposition~\ref{grow rate for major component in finite step n}. It shows that this error term is negligible in the asymptotic regime.
	\end{itemize}

	\subsection{Reparametrization}

	Let $\sigma: [0,1]\to M$ be a $C^{r}$ curve. For an integer $s\ge 1$, the quantity $\left\|d^{s}\sigma\right\|$ denotes the supremum norm of the $s$-th derivative of $\sigma$, namely
	$$
	\left\|d^{s}\sigma\right\|
	:=
	\sup_{t\in[0,1]} \left\|d^{s}_{t}\sigma\right\|,
	$$
	where the norm is taken with respect to the Riemannian metric on $M$. We define the $C^{r}$ norm by $$
	\left\|\sigma\right\|_{C^{r}}
	:=
	\max_{1\le s\le r}\,
	\left\|d^{s}\sigma\right\|.$$ We say that $\sigma$ is \emph{bounded} if $$\max_{2\le s\le r}\,\left\|d^{s}\sigma\right\|\leq\frac{1}{6}\cdot\left\|d\sigma\right\|.$$ Note, consequently, we have
\begin{equation}\label{Property of boundedness}
	\left\|d\sigma\right\|\leq 2\cdot\left\|d_t\sigma\right\|,\,\forall t\in[0,1].
\end{equation} Moreover, the curve $\sigma$ is said to be \emph{$\varepsilon$-bounded} if $$\left\|d\sigma\right\|\leq \varepsilon.$$ Roughly speaking, a bounded curve is one that is nearly straight, whereas an $\varepsilon$-bounded curve is nearly straight and, in addition, has small length.

We often call an affine map from $[0,1]$ to itself a \emph{reparametrization}.

The following proposition is a reparametrization result in the spirit of Yomdin theory, developed by Burguet \cite[Lemma 12]{Bur24}. It shows that an $\varepsilon$-bounded curve can be decomposed, via affine reparametrizations, into finitely many pieces on which the image under $f$ remains bounded. Moreover, the number of such pieces and the sizes of the reparametrizations are quantitatively controlled by the expansion rates along the curve.

The ceiling function $\left\lceil x\right\rceil$ assigns to each real number $x$ the smallest integer $n$ such that $n\ge x$. Recall that, given a curve $\sigma$, $\hat{x}:=(x,[v])\in\hat{M}$  where $v$ is the unit tangent vector of $\sigma$ at $x\in \sigma$.

	\begin{Proposition}\cite[Lemma 12]{Bur24}\label{Burguet-Yomdin reparametrization}
		Let $f$ be a $C^r$ ($r>1$) diffeomorphism on a compact manifold $M$. There is a small constant $\varepsilon_{f}>0$ such that for any $\varepsilon_{f}$-bounded $C^r$ curve $\sigma :[0,1]\to M$ and any two integers $a,b\in\mathbb{Z}$, there is a family $\Theta$ of affine maps from $[0,1]$ to itself with the following properties:
		\begin{enumerate}
			\item $\left\{ x \in\sigma :~ \left\lceil\log\left\|Df_{x}\right\|\right\rceil=a, \,\left\lceil\log\left\|Df\left(\hat{x}\right)\right\|\right\rceil=b \right \}\subset \cup_{\theta\in \Theta} \sigma\circ\theta$,
			\item $f\circ \sigma\circ\theta$ is bounded for any $\theta\in\Theta$,
			\item $|\theta'|\leq e^{-\frac{a-b+3r}{r-1}}$ for any $\theta\in\Theta$,
			\item $\log\#\Theta\leq  A_{r} + \frac{a-b}{r-1}$ where $A_{r}$ is a universal constant depending only on $r$.
		\end{enumerate}

	\end{Proposition}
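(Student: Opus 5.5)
The plan is to follow the classical Yomdin scheme in the simplified form Burguet uses: a single affine subdivision at a scale dictated by $a-b$, followed by a universal polynomial reparametrization step that contributes the constant $A_r$.

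\emph{Setup.} First I would choose $\varepsilon_f$ smaller than the injectivity radius and so small that, in exponential charts at scale $\varepsilon_f$, $f$ is represented by a $C^r$ map $F$ between balls of $\mathbb{R}^2$. Its $C^r$ norm should be controlled by a constant $C_f$ depending only on $f$, and $\|DF\|$ should differ from $\|Df_x\|$ by a factor at most $e^{1/2}$ on each chart. Fix an $\varepsilon_f$-bounded curve $\sigma$ and integers $a,b$, and let $E\subset[0,1]$ be the set of parameters $t$ with $x=\sigma(t)$ satisfying $\lceil\log\|Df_x\|\rceil=a$ and $\lceil\log\|Df(\hat x)\|\rceil=b$. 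Since $\|Df(\hat x)\|\le\|Df_x\|$, we may assume $b\le a$.

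\emph{Step 1: affine subdivision.} Set $\delta:=e^{-\frac{a-b+3r}{r-1}}$. Cover $[0,1]$ by at most $\lceil\delta^{-1}\rceil$ consecutive intervals $J$ of length $\delta$, and keep only those meeting $E$. Their number is at most $2e^{\frac{a-b+3r}{r-1}}$, which is of the form $e^{A'_r+\frac{a-b}{r-1}}$. For each kept $J$, let $\theta_J$ be the affine map onto $J$ and put $g_J:=F\circ\sigma\circ\theta_J$.

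By the Fa\`a di Bruno formula, and because $\sigma$ is $\varepsilon_f$-bounded (so $\|d^k\sigma\|\le\|d\sigma\|\le\varepsilon_f$), the top-order (Hölder) part of $d^r g_J$ satisfies
$$\|d^r g_J\|\lesssim \delta^r\bigl(e^{a}\|d\sigma\|+C_f\|d\sigma\|^2\bigr)\lesssim \delta^{r}e^{a}\|d\sigma\|.$$
On the other hand, at a parameter $t_0\in J\cap E$ we have
$$\|d_{t_0}g_J\|=\delta\,\|Df(\hat x)\|\,\|d_{t_0}\sigma\|\ge \delta e^{b-1}\|d\sigma\|/2.$$
Dividing, the choice of $\delta$ gives
$$\frac{\|d^rg_J\|}{\|d_{t_0}g_J\|}\lesssim\delta^{r-1}e^{a-b}=e^{-3r}.$$
This is exactly where the exponent $\frac{1}{r-1}$ and the bound in item 3 come from.

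\emph{Step 2: from top derivative to boundedness.} A small $r$-th derivative relative to the first derivative at one point does not by itself make $g_J$ bounded. Intermediate derivatives of order $2\le s<r$ could still be large, and $\|dg_J\|$ could vary along $J$. To handle this, I would replace $g_J$ by its Taylor polynomial $P$ of degree $\lfloor r\rfloor$ (or $r-1$ when $r$ is an integer) at $t_0$; the remainder is controlled by $\|d^rg_J\|$, which is negligible compared with $\|d_{t_0}g_J\|$. Then I would apply a Yomdin-type polynomial lemma: for polynomials of bounded degree, $[0,1]$ can be cut into at most $N_r$ intervals (with $N_r$ depending only on the degree) such that on each the polynomial, affinely rescaled, has all derivatives of order $2,\dots,\lfloor r\rfloor$ bounded by a small multiple of its first derivative. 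I would prove this via root counting for $\langle P',P^{(s)}\rangle$-type expressions and Markov inequalities. Rescaling by a factor at most $1$ only improves the $r$-th derivative estimate relative to the first derivative, so each sub-piece $f\circ\sigma\circ\theta$ is bounded, with $\|d^{s}\|\le\frac16\|d\|$ for all $2\le s\le r$. This gives items 1 and 2. Moreover $|\theta'|\le\delta$ still holds, which is item 3, and $\#\Theta\le N_r\cdot e^{A'_r+\frac{a-b}{r-1}}$, which is item 4 with $A_r:=A'_r+\log N_r$. Pieces not meeting $E$ can simply be discarded.

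\emph{Main obstacle.} I expect Step 2 to be the crux. The difficulty is making the polynomial lemma produce a number of pieces independent of $a$, $b$ and $\sigma$, while ensuring that the first derivative does not collapse on some sub-piece: the norm $\|P'\|$ may vanish somewhere even though it is large at $t_0$. The first thing I would try is to subdivide only according to the zeros of $\frac{d}{dt}\|P'(t)\|^2$ and of the components of $P^{(s)}$, whose number is bounded by the degree, and to discard sub-pieces disjoint from $E$. On sub-pieces meeting $E$, the lower bound $\|dg\|\ge\delta e^{b-1}\|d\sigma\|/2$ is available at some point of the piece, which together with monotonicity on each sub-piece should give the comparability needed.
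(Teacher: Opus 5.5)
The paper does not prove this statement. It imports \cite[Lemma 12]{Bur24} and only notes that inserting $3r$ in the exponent costs a factor depending on $r$ alone, which is absorbed into $A_r$.

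Your reconstruction follows the usual Yomdin--Burguet scheme, and Step 1 is sound. With $V:=\delta e^{b}\|d\sigma\|$ you get $\|d^r g_J\|\lesssim e^{-3r}V$. If the implicit constants are not small enough, you may shrink $\delta$ by a factor depending only on $r$: item 3 allows this and item 4 absorbs it.

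The gap is in Step 2, exactly where you placed the crux.
\begin{itemize}
\item The polynomial lemma as you first state it is false. Take $P(t)=(t^2,t^2)$ and $\theta$ affine onto $[c,c+h]$. Then $\|d^2(P\circ\theta)\|/\|d(P\circ\theta)\|=h/(c+h)$, so no finite family of pieces with this ratio below some $\eta<1$ covers a neighbourhood of $t=0$.
\item Your repair does not close this. In the same example, cutting at the zeros of $\frac{d}{dt}\|P'\|^2$ and of the components of $P''$ leaves the single piece $[0,1]$. That piece meets $E$ if $1\in E$, and $\|P'\|$ is monotone on it, yet it is not bounded. Monotonicity plus a lower bound at one point says nothing about comparability of $\|P'\|$ across the piece.
\item For the same reason, the claim that rescaling ``only improves'' the ratio fails whenever $\sup\|P'\|$ on a sub-piece is much smaller than at $t_0$.
\end{itemize}

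What is missing is a localization to the band where the first derivative has the size dictated by $E$. On $E\cap J$ one has $\|g_J'\|\in[V/C_0,C_0V]$ with $C_0$ absolute; this uses $\lceil\log\|Df(\hat x)\|\rceil=b$ and $\|d_t\sigma\|\ge\frac12\|d\sigma\|$. All derivatives of $g_J-P$ of order at most $r$ are $\lesssim\|d^rg_J\|\ll V$. Hence $E\cap J$ lies in the set $\{t:\ V/(2C_0)\le\|P'(t)\|\le 2C_0V\}$. The argument then runs as follows.
\begin{itemize}
\item This set has $O(\deg P)$ components, since it is cut out by two polynomial inequalities of degree at most $2\deg P$.
\item On each component $I$, $\|P'\|$ varies by at most the factor $4C_0^2$.
\item Markov's inequality gives $\|P^{(s)}\|_I\le C_r|I|^{1-s}\sup_I\|P'\|$.
\item Cut $I$ into $N$ equal pieces, with $N$ depending only on $r$. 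On each piece the rescaled $P$ then has all derivatives of order $2,\dots,\deg P$ at most $\frac1{12}$ of its first derivative.
\item On such a piece of length $h$, the remainder adds at most $h^{s-1}\cdot O(\|d^rg_J\|/V)$ to the ratio. After the adjustment of $\delta$ above, this is at most $\frac1{12}$.
\end{itemize}
This gives $O_r(1)$ bounded pieces per kept $J$, and hence items 1--4.

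Some such step is unavoidable. The intermediate derivatives $\|d^sg_J\|$ may be of order $e^{(a-b)\frac{r-s}{r-1}}V$, so no uniform subdivision into $O_r(1)$ pieces can control them.
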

	\begin{Remark}
	 Compared with \cite[Lemma 12]{Bur24}, we slightly modify the third property by adding the term " $3r$ " in the exponent. This modification is acceptable, since the contraction factor is only required to depend on $r$, and any adjustment by a uniform constant depending solely on $r$ does not affect the validity of the result. The role of the additional term " $3r$ " is purely technical (see the remarks following Proposition~\ref{grow rate for major component in finite step n} for details).
	\end{Remark}

The following result, due to Burguet \cite[Lemma 8]{Bur24'}, can be regarded as an $n$-step and local version of the reparametrization result stated in Proposition~\ref{Burguet-Yomdin reparametrization}.

For $x\in M$, $n\in\mathbb{N}$ and $\varepsilon>0$, recall the usual \emph{$(n,\varepsilon)$ dynamical ball} at $x$ is defined by $$B(x,n,\varepsilon):=\left\{y\in M:\ d\left(f^{k}(y),f^{k}(x)\right)<\varepsilon \ \text{ for all } 0\le k<n\right\}.$$
		\begin{Proposition}\cite[Lemma 8]{Bur24'}\label{Corollary 2 Burguet-Yomdin reparametrization}
		Let $f$ be a $C^r$ ($r>1$) diffeomorphism on a compact manifold $M$. For any integer $p\geq 1$, there are a small constant $\varepsilon_{f^{p}}>0$ and a large constant $B_{p}$ such that for any  $\varepsilon_{f^{p}}$-bounded $C^r$ curve $\sigma :[0,1]\to M$, any $x\in\sigma$ and any $n\in\mathbb{N}$, there is a family $\Theta$ of affine maps from $[0,1]$ to itself with the following properties:
		\begin{enumerate}
			\item $B\left(x,n,\varepsilon_{f^{p}}\right)\cap\sigma\subset \cup_{\theta\in \Theta} \sigma\circ\theta$,
			\item $f^{k}\circ \sigma\circ\theta$ is $\varepsilon_{f^{p}}$-bounded for any $\theta\in\Theta$ and $0\leq k\leq n$,

			\item $$\log\#\Theta\leq B_{p}+n\cdot A_{r}+\frac{1}{r-1}\cdot\frac{1}{p}\sum_{k=0}^{n-1}\left(\left\lceil\log\left\|Df^{p}_{f^{k}(x)}\right\|\right\rceil-\left\lceil\log\left\|Df^{p}\left(\hat{f}^{k}\left(\hat{x}\right)\right)\right\|\right\rceil\right)$$ where $A_{r}$ is a universal constant depending only on $r$.
		\end{enumerate}

	\end{Proposition}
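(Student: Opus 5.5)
Apply the one-step reparametrization of Proposition~\ref{Burguet-Yomdin reparametrization} to the diffeomorphism $f^{p}$ and iterate it along the orbit of $x$, keeping only the pieces that meet the dynamical ball $B(x,n,\varepsilon_{f^{p}})$. First reduce to times that are multiples of $p$. Write $n=qp+s$ with $0\le s<p$. The last $s$ steps, and the passage between the time-$p$ steps and the intermediate times $k$ not divisible by $p$, will be handled by a fixed subdivision of each piece into at most $C_p$ affine pieces. The constant $C_p$ depends only on $\|f\|_{C^{r}},\dots,\|f^{p}\|_{C^{r}}$. These subdivisions are absorbed into $B_{p}$ (a bounded number of times) and into $nA_{r}$ (once per $p$ steps, after enlarging $A_r$ if needed). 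We choose $\varepsilon_{f^{p}}\le \varepsilon_{f^{p}}$-constant of Proposition~\ref{Burguet-Yomdin reparametrization} for $f^{p}$, and small enough that a bounded curve of length $\lesssim\varepsilon_{f^p}$ has $f^{j}$-images ($0\le j\le p$) of length $\le \varepsilon_{f^p}$ after a uniform subdivision.

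The induction runs as follows. Suppose at stage $k$ (a multiple of $p$) we have a family $\Theta_k$ such that each $f^{k}\circ\sigma\circ\theta$ is $\varepsilon_{f^p}$-bounded and the union covers $B(x,k,\varepsilon_{f^p})\cap\sigma$. For each $\theta\in\Theta_k$ meeting the ball, the curve $\gamma=f^{k}\sigma\theta$ lies in the $\varepsilon_{f^p}$-neighbourhood of $f^{k}x$. Let $a_k=\lceil\log\|Df^{p}_{f^k x}\|\rceil$ and $b_k=\lceil\log\|Df^{p}(\hat f^{k}\hat x)\|\rceil$. By uniform continuity of $Df^p$ on $M$ and on $\hat M$, every point $y\in\gamma$ in the ball satisfies $\lceil\log\|Df^p_y\|\rceil\in\{a_k-1,a_k,a_k+1\}$. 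The same holds for the tangent direction, once $\varepsilon_{f^p}$ is small: since $\gamma$ is bounded, its tangent directions near $f^k x$ are close to $\hat f^{k}\hat x$ along the relevant portion. This step needs care, because the tangent of $\gamma$ at $f^k y$ is $\hat f^k\hat y$, not $\hat f^{k}\hat x$. The direction closeness must come from the boundedness of all the $f^{j}\sigma\theta$ together with the dynamical-ball condition. Applying Proposition~\ref{Burguet-Yomdin reparametrization} to $\gamma$ for the at most nine pairs $(a,b)$ produces $\lesssim 9e^{A_r+(a_k-b_k+2)/(r-1)}$ pieces whose $f^{p}$-images are bounded. We then cut each piece into boundedly many subpieces to restore $\varepsilon_{f^p}$-boundedness, using property 3 (the factor $e^{-(a-b+3r)/(r-1)}$ together with $\|Df^{p}|_\gamma\|\approx e^{b}$ keeps the length small). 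Composing the affine maps gives $\Theta_{k+p}$.

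Summing logarithms over $q$ stages gives $\sum_{j<q}(a_{jp}-b_{jp})/(r-1)+q(A_r+C)$. This is indexed by $k\in p\mathbb{Z}$ only, while the claimed bound averages over all $k$ with the factor $1/p$. To fix this, we run the same construction starting at each offset $i\in\{0,\dots,p-1\}$, where the first $i$ steps cost only a constant. Because the curve count is the same cover-size quantity for every offset, we may take the minimum over offsets, which is at most the average, yielding exactly $\frac1p\sum_{k=0}^{n-1}(a_k-b_k)$. The main obstacle is the tangent-direction control above, namely ensuring that $\lceil\log\|Df^p(\hat f^k\hat y)\|\rceil$ is within a constant of $b_k$ for all $y$ in the ball along the bounded piece. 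I would first try to obtain it from the fact that bounded $\varepsilon$-short curves have nearly constant tangent direction, comparing $\hat y$ with $\hat x$ along the same piece containing both.
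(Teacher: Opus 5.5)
The gap is exactly the step you flag as the main obstacle, and it cannot be closed in the form you need. To restrict to nine pairs $(a,b)$ you need the tangent data $\lceil\log\|Df^{p}(\hat f^{k}\hat y)\|\rceil$ to be within a constant of $b_k$ for every $y\in B(x,n,\varepsilon_{f^p})\cap\sigma$. Your suggested route cannot give this, for three reasons:
\begin{itemize}
\item Boundedness is scale invariant. The conditions $\|d^{2}\gamma\|\le\frac16\|d\gamma\|$ and $\|d_t\gamma\|\ge\frac12\|d\gamma\|$ only bound the total turning of the tangent by about $1/3$, however small $\varepsilon$ is.
\item A fixed angle can change $\log\|Df^{p}(\cdot)\|$ by up to $\log(\|Df^{p}\|\,\|(Df^{p})^{-1}\|)$. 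So uniform continuity of $\log\|Df^p\|$ on $\hat M$ is of no use here.
\item Points of the ball need not lie on the piece through $f^kx$.
\end{itemize}
Worse, the comparison is false, and the count cannot be bounded by the data of $x$ alone. Let $f$ be a linear Anosov map of $\mathbb{T}^2$ with orthogonal eigendirections and eigenvalues $\lambda^{\pm1}$. In (unstable, stable) eigencoordinates centred at $x$, put $\sigma(t)=x+(g(Lt),Lt)$ with $L=\varepsilon/4$ and $g(u)=u\tan\eta\,\psi(u/L)+\delta\sin(\omega u)$. Here $\psi=1$ near $0$, $\psi=0$ on $[\frac12,1]$, $\delta=L\lambda^{-n}$, $\omega=cL^{-1}\lambda^{n/r}$, and $\eta,c>0$ are small depending only on $r$. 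Since $L^{s}\delta\omega^{s}=c^{s}L\lambda^{-n(1-s/r)}\le c^{s}L$ for $s\le r$, the curve $\sigma$ is $\varepsilon$-bounded. It is a single bounded piece containing both $x$ and the arc below, so comparing $\hat y$ with $\hat x$ along a common piece already fails at $k=0$.
\begin{itemize}
\item The arc $t\in[\frac12,1]$ lies in $B(x,n,\varepsilon)$, and its tangent is within $c\lambda^{-n(1-1/r)}$ of $E^{s}$.
\item Its $f^{n}$-image has length $\gtrsim c\,\varepsilon\lambda^{n/r}$. Each $f^{n}\circ\sigma\circ\theta$ has length at most $\varepsilon$, so any admissible $\Theta$ satisfies $\log\#\Theta\ge\frac nr\log\lambda-O(1)$.
\item On the other hand, $\hat f^{k}\hat x$ is within angle $O(\lambda^{-2k}/\eta)$ of $E^{u}$. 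Hence $a_k-b_k\le 1$ for $k\ge1$, and the right side of item 3 is at most $B_p+nA_r+\frac{n}{(r-1)p}+O(1)$.
\end{itemize}
This is violated for large $n$ as soon as $\lambda$ is large, e.g.\ a high power of a symmetric hyperbolic matrix with $\log\lambda>r(A_r+1)$. Along the wiggly arc the tangent data are $\approx-p\log\lambda$ for a positive proportion of times, while those of $x$ are $\approx p\log\lambda$.

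What your induction does prove is the version that covers only those $y\in B(x,n,\varepsilon_{f^p})\cap\sigma$ whose tangent data coincide with those of $x$ (or whose lifts stay $\varepsilon$-close to $\hat f^{k}\hat x$ in $\hat M$). There Proposition~\ref{Burguet-Yomdin reparametrization} for $f^{p}$ is applied with prescribed $(a,b)$ at each step, and no direction comparison is needed. The paper gives no proof of its own; it only cites Burguet. This restricted form is all it uses, in Case 2 of the Claim in the proof of Proposition~\ref{grow rate for major component in finite step n}, where the spanning points and the covered points share the data $\gamma',\gamma''$.

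Even for that version, two of your steps need repair.
\begin{itemize}
\item Property 3 does not keep $f^{p}\circ\gamma\circ\theta$ short. It only gives length $\lesssim e^{\,b-(a-b+3r)/(r-1)}\varepsilon$, which is huge when $b\approx a\gg1$. You get $O(1)$ subpieces only by discarding those that leave the $\varepsilon$-ball around $f^{k+p}x$: the tangent of a bounded curve stays in a cone of aperture about $1/3$, so it meets an $\varepsilon$-ball in an arc of length $O(\varepsilon)$.
\item A subdivision cost $\log C_p$ per block of $p$ steps, with $C_p$ depending on $\|f^{j}\|_{C^r}$, contributes $\frac np\log C_p$. This cannot be absorbed into a constant $A_r$ depending only on $r$. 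So boundedness at the intermediate times is not obtained at the claimed cost, and the minimum over offsets controls only one residue class mod $p$.
\end{itemize}
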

\begin{Remark}
Compared with \cite[Lemma 8]{Bur24'}, we insert ceiling operations in the third property. This modification is acceptable, since the resulting discrepancy is at most linear in $n$ and can therefore be absorbed into the constant $A_r$. Also, to simplify notation, we assume that the constant $A_{r}$ appearing here is the same as the one in Proposition~\ref{Burguet-Yomdin reparametrization}.
\end{Remark}

	\subsection{Decomposition of the volume growth}
	Recall that $\rho:\hat{M}\to\mathbb{R}$ is the continuous function $$\rho(x, [v]):=\log\left\|Df_{x}\left(v\right)\right\|$$ where $v$ is a unit vector in $T_{x}M$. Define another continuous function $\rho':\hat{M}\to\mathbb{R}$ (introduced by Burguet \cite{Bur24}) by $$\rho'(x,[v]):=\rho(x, [v])-\frac{1}{r}\log\left\|Df_{x}\right\|.$$

	\begin{Definition}
		We call $n\geq 1$   a \emph{geometric time} of $\hat{x}=(x,[v])$  if for any $0\leq k\leq n-1$,
	\begin{equation}\label{definition of geometric time}
		\sum_{i=k}^{n-1}\rho'\left(\hat{f}^{i}(\hat{x})\right)\geq n-k.
	\end{equation}
	We let $E(\hat{x})$ denote the set of all geometric times of $\hat{x}$.
	\end{Definition}
	Roughly speaking, the existence of infinitely many geometric times for a point $\hat{x}$ suggests that its projection $x=\pi(\hat{x})$ has a positive maximal Lyapunov exponent larger than $1$. At first glance, this may appear to be an overly strong requirement in the definition of geometric times. However, the precise expansion rate $1$ used in this definition is in fact irrelevant: since our arguments ultimately concern the dynamics of large iterates of $\hat{f}$ (see the proof of Theorem~\ref{main theorem of decomposition of volume growth of submanifolds}), any fixed normalization of the expansion threshold leads to an equivalent notion for our purposes.

	\begin{Lemma}\label{property of geometric time}
		Let $\hat x\in\hat M$. Suppose that $m\ge1$ is not a geometric time of $\hat x$, and let
		$n<m$ be the largest geometric time smaller than $m$ (set $n=0$ if there is no geometric time before $m$). Then
		$$\sum_{i=n}^{m-1}\rho'\left(\hat f^{i}(\hat{x})\right)<m-n.$$
	\end{Lemma}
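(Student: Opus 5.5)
Since $m$ is not a geometric time, the definition fails at $m$: there is some $k_0\in\{0,\dots,m-1\}$ with
$$\sum_{i=k_0}^{m-1}\rho'\left(\hat f^{i}(\hat x)\right)<m-k_0 .$$
The plan is to take $k_0$ to be the \emph{largest} such index. Set $S(k):=\sum_{i=k}^{m-1}\left(\rho'(\hat f^{i}(\hat x))-1\right)$ for $0\le k\le m$, with $S(m)=0$. Then $S(k_0)<0$ and $S(k)\ge 0$ for every $k_0<k\le m-1$. We will show that $k_0\le n$, and then that the sum from $n$ itself is negative, which is exactly the claim.

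First, suppose $k_0\ge 1$. For $0\le j\le k_0-1$ we have
$$\sum_{i=j}^{k_0-1}\left(\rho'(\hat f^{i}(\hat x))-1\right)=S(j)-S(k_0),$$
so we need $S(j)-S(k_0)\ge 0$, i.e. $S(j)\ge S(k_0)$. This is not immediate, so we choose $k_0$ differently: let $k^{*}\in\{0,\dots,m-1\}$ be an index where $S$ attains its \emph{minimum} over $\{0,\dots,m-1\}$, and among all minimizers take the largest one. The minimum is at most $S(k_0)<0$, so $S(k^{*})<0$. If $k^{*}\ge 1$, then $S(j)\ge S(k^{*})$ for all $j<k^{*}$, so every tail sum $\sum_{i=j}^{k^{*}-1}\rho'(\hat f^{i}(\hat x))\ge k^{*}-j$. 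Hence $k^{*}$ is a geometric time smaller than $m$, and by maximality of $n$ we get $k^{*}\le n$.

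Now we prove the inequality at $n$. If $n=0$, then $S(0)\le S(k^{*})<0$ when $k^{*}=0$. If instead $k^{*}\ge 1$, then $k^{*}$ is a geometric time, which would force $n\ge k^{*}\ge 1$ and contradict $n=0$; so this case cannot occur. Thus $S(0)<0$, which is the claim for $n=0$. If $n\ge1$, we first show $S(n)\le S(k^{*})$, and it suffices to treat $n>k^{*}$. Since $n$ is a geometric time, taking $k=k^{*}$ in its definition gives
$$\sum_{i=k^{*}}^{n-1}\left(\rho'(\hat f^{i}(\hat x))-1\right)\ge 0,\qquad\text{i.e.}\qquad S(k^{*})-S(n)\ge 0 .$$
So $S(n)\le S(k^{*})$. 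Combined with the minimality of $S(k^{*})$, this gives $S(n)=S(k^{*})<0$. (Note that this contradicts choosing $k^{*}$ as the largest minimizer unless $n=k^{*}$, but we only need $S(n)<0$.) The conclusion $S(n)<0$ is precisely $\sum_{i=n}^{m-1}\rho'(\hat f^{i}(\hat x))<m-n$.

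The main delicate step is choosing the minimizer of the partial tail sums $S$, rather than an arbitrary failing index. The minimizer is what makes the minimizing index itself a geometric time, and that is what ties it to $n$.
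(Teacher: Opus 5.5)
Your proof is correct, but it finds the failing index below $n$ differently from the paper.

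\textbf{The paper's route.} The paper takes the \emph{smallest} $k$ with $\sum_{i=k}^{m-1}\rho'(\hat f^{i}(\hat x))<m-k$ and rules out $k>n$ by contradiction. Such a $k$ would lie in $(n,m)$, so it is not a geometric time. Hence some $\ell<k$ fails for $k$, and adding the two failing blocks gives a failing index $\ell<k$ for $m$, contradicting minimality. The paper then subtracts the inequality given by $n$ being geometric.

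\textbf{Your route.} You take the largest minimizer $k^{*}$ of $S$ on $\{0,\dots,m-1\}$. The inequalities $S(j)\ge S(k^{*})$ for $j<k^{*}$ are exactly the geometric-time condition at $k^{*}$ (when $k^{*}\ge 1$). So $k^{*}\le n$ follows directly from the maximality of $n$, with no concatenation step, and the same subtraction finishes.

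\textbf{What each buys.} Your approach is the ``record time'' picture. Write $P(j)=\sum_{i<j}\bigl(\rho'(\hat f^{i}(\hat x))-1\bigr)$. Then a time $k\ge1$ is geometric iff $P(k)\ge\max_{j<k}P(j)$. Your argument actually shows $k^{*}=n$, since $k^{*}<n$ would make $n$ a larger minimizer of $S$. In other words, $n$ is exactly the last index where $P$ attains its maximum on $\{0,\dots,m-1\}$, and the lemma becomes $P(m)<\max_{j<m}P(j)$. This is slightly stronger and more transparent than the stated lemma. The paper's route avoids the extremal choice, but instead uses that every time in $(n,m)$ is non-geometric.

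\textbf{Presentation.} Delete the abandoned first attempt with the largest failing index $k_0$. You can also replace the parenthetical remark with the direct observation $k^{*}=n$, which makes the final step immediate.
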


	\begin{proof}
		Since $m$ is not a geometric time, there exists $k\in\{0,\dots,m-1\}$ such that
		\begin{equation}\label{nongeometric time at k}
			\sum_{i=k}^{m-1}\rho'\left(\hat f^{i}(\hat{x})\right)<m-k.
		\end{equation}
		Choose $k$ minimal with this property.
		  If $k>n$, since $k$ is not a geometric time, there exists
			$\ell\in\{0,\dots,k-1\}$ such that
			$$\sum_{i=\ell}^{k-1}\rho'\left(\hat f^{i}(\hat{x})\right)<k-\ell.$$
			Adding this to inequality (\ref{nongeometric time at k}) yields with $\ell<k$
			$$\sum_{i=\ell}^{m-1}\rho'\left(\hat f^{i}(\hat{x})\right)<m-\ell,$$ contradicting the minimality of $k$. Therefore we must have $k\leq n$. If $k=n$, the conclusion holds by inequality (\ref{nongeometric time at k}) (this remains true even in the case $n=0$, where $k$ must also be $0$). If $k<n$, since $n$ is a geometric time,
			$$\sum_{i=k}^{n-1}\rho'\left(\hat f^{i}(\hat{x})\right)\ge n-k.$$ Subtracting this from the inequality (\ref{nongeometric time at k}) yields
			$$\sum_{i=n}^{m-1}\rho'\left(\hat f^{i}(\hat{x})\right)<(m-k)-(n-k)=m-n,$$
			which is exactly the desired conclusion.
			
	\end{proof}

	Given a subset $E\subset \mathbb{N}$ and a number $L\in\mathbb{N}$, we define
	$$E^{L}:=\bigcup [i,j]$$ where the union runs over all $i,j\in E$ with $0\leq j-i\leq L$. Any element of $E^{L}(\hat{x}):=\left(E(\hat{x})\right)^{L}$ lies at distance at most $L$ from a geometric time. Since this discrepancy is uniformly bounded by $L$, it does not affect the asymptotic estimates as $n\to+\infty$. We can view $E^{L}(\hat{x})$ as the set of \emph{$L$-extended geometric times} of $\hat{x}$.

Let $\varepsilon_{f}\gg \varepsilon_{f^{p}}$ be the small constants in Proposition \ref{Burguet-Yomdin reparametrization} w.r.t. $f$ and $f^{p}$.

\begin{Definition}\label{Definition of periodic sources}
A periodic point $z$ of $f$ is called a \emph{periodic source} if both Lyapunov exponents of $z$ are positive. 

Let $T_{z}$ denote the period of $z$ and let $a:=\min_{i\neq j}\,d\left(f^{i}(z), f^{j}(z)\right)$.  We fix a small number $0<r_{z}<\min\{\frac{a}{100}, \frac{\varepsilon_{f}}{100\left\|Df\right\|}\}$ \footnote{The purpose of introducing the small number $r_{z}$ will be revealed in Proposition~\ref{volume growth for time n}.} such that for any $0\leq k \leq T_{z}-1$,
\begin{enumerate}
	\item  $r_{z}=r_{f^{k}\left(z\right)}$,
	\item 
	$$f^{-T_{z}}\left(B\left(f^{k}(z),r_{z}\right)\right)\subset B\left(f^{k}(z),r_{z}\right),$$
	\item $$f\left(B\left(f^{k}(z),r_{z}\right)\right)\subset B\left(f^{k+1}(z),\frac{a}{100}\right).$$
\end{enumerate}
We list all periodic sources by $z_1, z_{2}, z_{3},\cdots$ (since there are at most countably many of them). We may choose $r_z$ sufficiently small so that the associated balls $B(z_{i}, r_{z_{i}})$ are pairwise disjoint (since periodic sources form a discrete set).
\end{Definition}

Given a $C^{r}$ curve $\sigma:[0,1]\to M$ and a point $x\in\sigma$, recall that we denote by $\hat{x}=(x,[v])\in\hat{M}$  where $v$ is the unit tangent vector of $\sigma$ at $x$. Given any $L,m,n, p\in \mathbb{N}$ with $m\leq n$, any $E\subset [0,n)$ and a family of integers $\beta:=\left\{\left(\beta'_{i},\, \beta''_{i}\right)\right\}_{i\in [0,n)}$, we define

$$\begin{aligned}
	O^{p,m,L}_{n, E,\beta}:=\Big\{x\in\sigma:~&  \left(E(\hat{x})\cap [0,n)\right)^{L}=E,\,\,\exists 1\leq s\leq n, \text{ s.t. } x\in B(z_{s}, m, r_{z_{s}}) \text{ and }\\ &  \frac{1}{p}\left\lceil\log\left\|Df^{p}_{f^{i}(x)}\right\|\right\rceil= \beta'_{i}, \,\frac{1}{p}\left\lceil\log\left\|Df^{p}\left(\hat{f}^{i}\left(\hat{x}\right)\right)\right\|\right\rceil=\beta''_{i},\, i\in [m,n)\cap E,\\ & \left\lceil\log\left\|Df_{f^{i}(x)}\right\|\right\rceil=\beta'_{i}, \,\left\lceil\log\left\|Df\left(\hat{f}^{i}\left(\hat{x}\right)\right)\right\|\right\rceil=\beta''_{i},\, i\in [0,n)\setminus\left([m,n)\cap E\right)\Big \}.
\end{aligned}$$
If $m=0$, we define
$$\begin{aligned}
	O^{p,0,L}_{n, E,\beta}:=\Big\{x\in\sigma:~& \left(E(\hat{x})\cap [0,n)\right)^{L}=E,\,\,\forall 1\leq s\leq n, \text{ s.t. } x\notin B(z_{s},  r_{z_{s}}) \text{ and }\\&  \frac{1}{p}\left\lceil\log\left\|Df^{p}_{f^{i}(x)}\right\|\right\rceil= \beta'_{i}, \,\frac{1}{p}\left\lceil\log\left\|Df^{p}\left(\hat{f}^{i}\left(\hat{x}\right)\right)\right\|\right\rceil=\beta''_{i},\, i\in [m,n)\cap E,\\ & \left\lceil\log\left\|Df_{f^{i}(x)}\right\|\right\rceil=\beta'_{i}, \,\left\lceil\log\left\|Df\left(\hat{f}^{i}\left(\hat{x}\right)\right)\right\|\right\rceil=\beta''_{i},\, i\in [0,n)\setminus\left([m,n)\cap E\right)\Big \}.
\end{aligned}$$

	\begin{Proposition}\label{grow rate for major component in finite step n}
		Let $f$ be a $C^r$ ($r>1$) diffeomorphism on a compact surface $M$. Consider the following setting:
		\begin{itemize}
			\item any $L,m,n, p\in\mathbb{N}$ with $m\leq n$,
			\item any $\varepsilon_{f}$-bounded $C^{r}$ curve $\sigma:[0,1]\to M$,

			\item any  $O^{p,m,L}_{n, E,\beta}$ with $\beta:=\left\{\left(\beta'_{i}, \beta''_{i}\right)\right\}_{i\in [0,n)}$.
		\end{itemize}
		 Then we have
		 $$\begin{aligned}
		 	\log{\rm Vol}\left(f^{n}\left(O^{p,m,L}_{n, E,\beta}\right)\right)
		 	&\leq \#\left([m,n)\cap E\right)\cdot h_{\rm top}(f)+\left(m + \#\left([m,n)\setminus E\right)\right)\cdot\frac{\log\left\|Df\right\|}{r}\\
		 	&+\sum_{k\in [m,n)\cap E}\frac{\beta'_{k}-\beta''_{k}}{r-1}+ C_{n,L}.
		 \end{aligned}$$ where the error term $C_{n,L}$ satisfies $$\limsup_{L \to +\infty}\limsup_{n \to +\infty}\frac{C_{n,L}}{n}=C_{r}$$ for some universal constant $C_{r}$ depending only on $r$.

	\end{Proposition}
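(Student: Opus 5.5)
We build, by induction on the time $k\in[0,n]$, a family $\Theta_k$ of affine reparametrizations of $[0,1]$ with the following properties: $O^{p,m,L}_{n,E,\beta}\subset\bigcup_{\theta\in\Theta_k}\sigma\circ\theta$, and each $f^{k}\circ\sigma\circ\theta$ is bounded (or $\varepsilon_{f^{p}}$-bounded inside blocks where we iterate by $f^{p}$). We then conclude from
\[
{\rm Vol}\bigl(f^{n}(O^{p,m,L}_{n,E,\beta})\bigr)\le \#\Theta_n\cdot\max_{\theta}{\rm Vol}(f^{n}\circ\sigma\circ\theta),
\]
where the last volume is bounded by a constant because the final curves are bounded and small. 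The time interval $[0,n)$ is cut into three kinds of segments, each treated differently.

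\textbf{Trapping times $[0,m)$.} Here $x\in B(z_s,m,r_{z_s})$ for a periodic source $z_s$. By Definition~\ref{Definition of periodic sources}, the orbit stays within $a/100$ of the single orbit of $z_s$ up to time $m$. So at each step we may apply Proposition~\ref{Burguet-Yomdin reparametrization} with the prescribed integers $(\beta'_i,\beta''_i)$ without splitting into separated pieces; no entropy is created. The cost per step is $A_r+\frac{\beta'_i-\beta''_i}{r-1}\le A_r+\frac{\log\|Df\|+1}{r-1}$. Choosing instead the crude Yomdin bound (the $3r$ shift in property 3 is what makes this bookkeeping close), each step costs $\frac{\log\|Df\|}{r}+O_r(1)$. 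This gives the term $m\frac{\log\|Df\|}{r}$. For $m=0$ this block is empty.

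\textbf{Neutral times $[m,n)\setminus E$.} These are maximal gaps $(n_1,n_2)$ between consecutive extended geometric times. On each gap we again use Proposition~\ref{Burguet-Yomdin reparametrization} step by step, with the curve confined in one dynamical ball. Lemma~\ref{property of geometric time} gives $\sum_{i=n_1}^{n_2-1}\rho'<n_2-n_1$, i.e. $\sum(\beta'_i-\beta''_i)$ exceeds $\sum\bigl(\beta'_i-\frac{\beta'_i}{r}\bigr)-O(n_2-n_1)$. This is the key step: we must turn the product of the per-step costs $e^{(\beta'_i-\beta''_i)/(r-1)}$ into $e^{\log\|Df\|/r}$ per step. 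We use that the reparametrization lengths $|\theta'|\le e^{-(\beta'-\beta''+3r)/(r-1)}$ shrink correspondingly. Comparing the number of pieces against the total length of the image, which stays $O(1)$ since non-geometric times carry little expansion $\sum\rho'<$ length, the pieces at the end of a neutral gap number at most $e^{\sum(\beta'_i-\beta''_i)/(r-1)}$ times a contraction. Rebalancing through the inequality $\sum\rho\le\frac1r\sum\log\|Df\|+(n_2-n_1)$ yields $\frac{(n_2-n_1)\log\|Df\|}{r}+O_r(n_2-n_1)$.

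\textbf{Geometric times $[m,n)\cap E$.} On these we switch to $f^{p}$. We cover $f^{k}\sigma$ by $(n,\varepsilon_{f^p})$ dynamical balls; their number is at most $e^{\#(\text{block})(h_{\rm top}(f)+\delta)}$ by the definition of entropy, once the block is long, which is where $L\to\infty$ enters. Inside each ball we apply Proposition~\ref{Corollary 2 Burguet-Yomdin reparametrization}, which contributes $\sum\frac{\beta'_k-\beta''_k}{r-1}+nA_r+B_p$.

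\textbf{Main obstacle.} The transition between blocks is the hardest part, especially the claim that entropy is counted only on geometric blocks. A curve that is bounded at a geometric time has length bounded below, so it does not split into separated pieces before the next geometric time. Making this uniform without a reference measure, and handling blocks shorter than $L$ (absorbed into $C_{n,L}$), is where we expect the difficulty. All per-step constants are collected into $C_{n,L}$, giving $\limsup_L\limsup_n C_{n,L}/n=C_r$.
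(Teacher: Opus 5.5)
Your architecture is the paper's: an inductive reparametrization, Proposition~\ref{Burguet-Yomdin reparametrization} applied step by step without splitting at trapping and neutral times, and spanning sets plus Proposition~\ref{Corollary 2 Burguet-Yomdin reparametrization} on geometric blocks. However, the estimate that turns the per-step Yomdin cost $\frac{\beta'_k-\beta''_k}{r-1}$ into $\frac{\log\|Df\|}{r}$ at non-geometric times is missing, and the inequality you invoke for it points the wrong way. If $[c,d)$ is a maximal interval of $[0,n_E)\setminus E$, then $d$ is a genuine geometric time of every $\hat x$ with $x\in O^{p,m,L}_{n,E,\beta}$. By definition, $\sum_{k=c}^{d-1}\rho'(\hat f^{k}(\hat x))\ge d-c$, which is the opposite of your claim $\sum_{i=n_1}^{n_2-1}\rho'<n_2-n_1$ over a whole gap. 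This gives $\sum_{[c,d)}\beta''_k\ge\sum_{[c,d)}\frac{\beta'_k}{r}-O(d-c)$, hence $\sum_{[c,d)}\frac{\beta'_k-\beta''_k}{r-1}\le\sum_{[c,d)}\frac{\beta'_k}{r}+O\big(\frac{d-c}{r-1}\big)$. That is the paper's neutral bound. The same argument on the maximal blocks of $[0,m)\cap E$ (at most $m/L$ right endpoints, plus one truncated block costing $O(L)$) gives the trapping bound. An upper bound on $\sum\rho$, which is what Lemma~\ref{property of geometric time} provides, can only bound $\sum(\beta'_k-\beta''_k)$ from below. Comparing the number of pieces with the total length of the image cannot repair this: Proposition~\ref{Burguet-Yomdin reparametrization} gives no lower bound on the lengths of the pieces, and a length bound gives none of the $C^r$ control needed to enter the next geometric block. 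Your trapping step has the same defect. The bound $\frac{\beta'_k-\beta''_k}{r-1}\le\frac{\log\|Df\|+1}{r-1}$ fails because $\beta''_k$ can be close to $-\log\|Df^{-1}\|$. None of the stated tools yields a per-step cost $\frac{\log\|Df\|}{r}$ by itself.

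In the paper, Lemma~\ref{property of geometric time} plays a different role. Take a neutral step $a_{i+1}$, with $a_j$ the last geometric time before it. The inequality $\sum_{k=a_j}^{a_{i+1}-1}\rho'<a_{i+1}-a_j$, combined with the accumulated contractions $|\theta'|\le e^{-(\gamma'-\gamma''+3r)/(r-1)}$ and $\|d\sigma\|\le 2\|d_t\sigma\|$, shows that the new pieces are $\varepsilon_f$-bounded rather than merely bounded. This is why no entropy is produced at neutral times, and it is where the shift $3r$ is needed. It is not because bounded curves have length bounded below: there is no such bound, since the restriction of a bounded curve to a subinterval is still bounded. Finally, the tail $[n_E,n)$ after the last geometric time has no closing geometric time. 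There the per-step cost $\frac{\beta'_k-\beta''_k}{r-1}$ can be as large as $\frac{\log\|Df\|+\log\|Df^{-1}\|+2}{r-1}$ (when the curve direction is contracted) while the volume stays small. So reparametrizing up to time $n$ and using ${\rm Vol}\le\#\Theta_n\cdot\max_\theta{\rm Vol}$, as you propose, does not give the stated bound. The paper stops reparametrizing at $n_E$ and estimates the volume directly: $\log{\rm Vol}\le\log{\rm Vol}(f^{n_E}\circ\sigma\circ\theta)+\sum_{[n_E,n)}\beta''_k+O(1)\le(n-n_E)\frac{\log\|Df\|}{r}+2n+O(1)$. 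This tail estimate is the one place where your direction of the inequality from Lemma~\ref{property of geometric time} is the right one.
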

The proof of Proposition~\ref{grow rate for major component in finite step n} is rather involved. We therefore first explain the underlying idea.

The set $O^{p,m,L}_{n,E,\beta}$ represents a portion of the curve $\sigma$ on which all points share the same collection of (L-extended) geometric times $E$ (we call the complementary times of $E$ the \emph{neutral times}) and the same derivative data along their orbits. For simplicity, we assume that $\sigma$ intersects only one periodic source, namely a repeller $z$.

We decompose the time interval $[0,n)$ into three parts \footnote{In the proof of Proposition~\ref{grow rate for major component in finite step n}, an additional part, referred to as \emph{tail times}, is introduced for technical reasons.}
: trapping times, (partial) geometric times, and (partial) neutral times, $$[0,m),\quad [m,n)\cap E,\quad [m,n)\setminus E.$$

	\begin{itemize}
		\item Trapping times $[0,m)$. During this period, the set $O^{p,m,L}_{n,E,\beta}$ remains confined in a small neighborhood of the repeller $z$. Consequently, its growth is purely local \footnote{Positivity of the local volume growth is not excluded even near a repeller; it may occur when oscillatory effects dominate the geometric repelling of the region. See Example~\ref{example repeller positive volume growth}.}.
		 The corresponding growth rate is bounded by the Yomdin term $\frac{\lambda^{+}(f)}{r}$.
		\item Geometric times $[m,n)\cap E$.  At these times, $O^{p,m,L}_{n,E,\beta}$ exhibits both a \emph{global} growth, bounded by $\#\left([m,n)\cap E\right)\cdot h_{\rm top}(f)$, and a \emph{local} growth, bounded by

		$$\sum_{k\in [m,n)\cap E}\frac{\beta'_{k}-\beta''_{k}}{r-1}.$$

		In Proposition~\ref{grow rate for major component in finite step n}, we retain the latter term temporarily and later show, in Proposition~\ref{volume growth for time n}, that its contribution is negligible.
		\item Neutral times $[m,n)\setminus E$. At neutral times, the growth of $O^{p,m,L}_{n,E,\beta}$ is again purely local, with growth rate bounded by the Yomdin term $\frac{\lambda^{+}(f)}{r}$.
	\end{itemize}

It is worth emphasizing that, at neutral times, the curve does not necessarily fail to grow. Rather, by definition of geometric times, the growth rate is modest-specifically, it is bounded by a constant strictly less than $1$ (see the definition of geometric times (\ref{definition of geometric time})). The precise threshold is irrelevant for our purposes; what matters is that this bound is uniform and does not depend on the dynamics of $f$.

\begin{proof}[Proof of Proposition \ref{grow rate for major component in finite step n}]
	We divide the proof into three steps.
		\medskip
		\medskip

\noindent\textbf{\large Step 1: An inductive construction of reparametrizations.}
	\medskip

 We first define some numbers:
\begin{itemize}
	\item Let $n_E$ be the maximal geometric time in $[0,n)$, that is, the largest element of $E\cap[0,n)$. If no such geometric time exists, we set $n_{E}=0$.

	\item Let $\mathcal{A}$ be the subset of $[0,n_{E}]$ obtained by removing all the integers $k$ such that $$k, \, k+1\in [m,n_{E}]\cap E.$$
	We enumerate the elements of $\mathcal{A}$ as $$0=a_{1}<a_{2}<\cdots<a_{l_{0}}=n_{E}.$$
	\item We consider several parts  \footnote{$n_E$ is regarded as both a geometric time and a tail time, purely for technical convenience.} in $[0,n)$. Write $\mathcal{A}_{i}:=\{a_{1},\cdots,a_{i}\}\subset\mathcal{A}$.
	\begin{itemize}
		\item {\bf Trapping times}: $\Gamma^{1}_{i}:=\mathcal{A}_{i}\cap [0,m)$,
		\item {\bf Geometric times} \footnote{We emphasize that the set $\Gamma^{2}_{i}$ is not meant to represent all geometric times. For technical reasons, it only records the right endpoints of the geometric intervals in $[m,n_{E}]\cap E$.}: $\Gamma^{2}_{i}:=\left(\mathcal{A}_{i}\cap E\right)\setminus [0,m)$,
		\item {\bf Neutral times}: $\Gamma^{3}_{i}=\mathcal{A}_{i}\setminus\left(\Gamma^{1}_{i}\cup\Gamma^{2}_{i}\right)$,
		\item {\bf Tail times}: $[n_{E},n)$.
	\end{itemize}
\end{itemize}

Recall that $1\gg\varepsilon_{f}\gg\varepsilon_{f^{p}}>0$ and $A_{r}$ are the constants in Proposition \ref{Burguet-Yomdin reparametrization} w.r.t. $f$ and  $f^{p}$. Let $B_{p}$ be the constant in Proposition \ref{Corollary 2 Burguet-Yomdin reparametrization}. We may further assume that $\varepsilon_{f^{p}}$ is sufficiently small and $B_{p}$ is sufficiently large such that
\begin{itemize}
	\item for any $k\in\mathbb{N}$, the minimal cardinality of a $(k,\varepsilon)$-spanning set of $M$ is bounded above: $$\log r(f,k,\varepsilon_{f^{p}})  \footnote{ Recall (see \cite{Wal82} for background in ergodic theory) that a set $\Omega\subset M$ is called \emph{$(k,\varepsilon)$-spanning} if for every $x\in M$,
		there exists $y\in \Omega$ such that such that
		$$
		d\bigl(f^i(x),f^i(y)\bigr)<\varepsilon
		\quad\text{for all } i=0,1,\dots,k-1.$$ Denote by $r(f,k,\varepsilon)$ the minimal cardinality of a $(k,\varepsilon)$-spanning set of $M$.}\leq B_{p}+k\left(h_{\rm top }(f)+A_{r}\right),$$
	\item there is a family $\hat{\Theta}$ \footnote{Such a family $\hat{\Theta}$ can be obtained by adapting the argument in Lemma~\ref{Decomposition into varepsilon bounded curves}, yielding the bound $\#\hat{\Theta}\lesssim \frac{\varepsilon_{f}}{\varepsilon_{f^{p}}}$.}
	 of affine maps from $[0,1]$ into itself with $$\#\hat{\Theta}\leq B_{p},\quad\bigcup_{\theta\in\hat{\Theta}}\theta\left([0,1]\right)=[0,1]$$ such that for any $\varepsilon_{f}$-bounded curve $\widetilde{\sigma}$ and any $\theta\in\hat{\Theta}$, $\widetilde{\sigma}\circ\theta$ is an $\varepsilon_{f^{p}}$-bounded curve.
\end{itemize}

Define $$\Delta:=\left\{a_{k}:~a_{k+1}\in\Gamma^{2}_{l_{0}}\right\}.$$

	Given $1\leq s\leq n$ and a family of integers $\gamma:=\left\{\left(\gamma'_{i},\, \gamma''_{i}\right)\right\}_{i\in [0,n)}$, we define \footnote{In the case $m=0$, we define $\Omega^{p,0,L,s}_{n, E,\gamma}$ by replacing the condition "$x\in B(z_{s}, m, r_{z_{s}})$" in $\Omega^{p,m,L,s}_{n, E,\gamma}$ with "$x\notin B(z_{s}, r_{z_{s}})$", as before.} :
$$\begin{aligned}
	\Omega^{p,m,L,s}_{n, E,\gamma}:=\Big\{&x\in\sigma:~  \left(E(\hat{x})\cap [0,n)\right)^{L}=E,\,x\in B(z_{s}, m, r_{z_{s}}) \text{ and }\\ &  \frac{1}{p}\left\lceil\log\left\|Df^{p}_{f^{i}(x)}\right\|\right\rceil= \gamma'_{i}, \,\frac{1}{p}\left\lceil\log\left\|Df^{p}\left(\hat{f}^{i}\left(\hat{x}\right)\right)\right\|\right\rceil=\gamma''_{i},\, i\in \left(\left([m,n)\cap E\right)\setminus \Gamma^{2}_{l_{0}}\right)\cup \Delta,\\ & \left\lceil\log\left\|Df_{f^{i}(x)}\right\|\right\rceil=\gamma'_{i}, \,\left\lceil\log\left\|Df\left(\hat{f}^{i}\left(\hat{x}\right)\right)\right\|\right\rceil=\gamma''_{i},\, i\in \left(\left([0,n)\setminus\left([m,n)\cap E\right)\right)\cup\Gamma^{2}_{l_{0}}\right)\setminus \Delta\Big \}.
\end{aligned}$$

The definition of $\Omega^{p,m,L,s}_{n,E,\gamma}$ differs from that of $O^{p,m,L}_{n,E,\beta}$ in two respects: first, it fixes a periodic source $z_s$; second, it modifies the definition at times belonging to $\Delta$ and $\Gamma^{2}_{l_0}$ for technical convenience. These times constitute only a small portion of $[0,n)$ (at most $2n/L$) and therefore do not affect the asymptotic estimates.

Next we carry out an inductive construction of a family of reparametrizations to cover $\Omega^{p,m,L,s}_{n, E,\gamma}$.

\begin{Claim}
	There are families $\{\Theta\}_{1\leq i\leq l_{0}}$ of affine maps from $[0,1]$ to itself such that for any $1\leq i\leq l_{0}$,

		\begin{enumerate}
		\item  $\Omega^{p,m,L,s}_{n, E,\gamma}\subset \cup_{\theta\in \Theta_{i}} \sigma\circ\theta$,
		\item for each $\theta_{i}\in\Theta_{i}$ ($i>1$) with $a_{i}\notin\Gamma^{2}_{i}$, there are some $\theta_{i-1}\in \Theta_{i-1}$ and some affine map $\theta:[0,1]\to[0,1]$ with $|\theta'|\leq e^{-\frac{\gamma'_{a_{i-1}}-\gamma''_{a_{i-1}}+3r}{r-1}}$ such that $\theta_{i}=\theta_{i-1}\circ\theta$,
		\item for any $\theta\in\Theta_{i}$, $f^{a_{i}}\circ\sigma\circ\theta$ is $\varepsilon_{f}$-bounded,
		\item we have \footnote{By convention, we set $[a,b)=\emptyset$ if $a>b$.}
		$$\log\# \Theta_{i}\leq  \#\left([m,a_{i}]\cap E\right)\cdot h_{\rm top}(f)+ \sum_{k<a_{i}}\frac{\gamma'_{k}-\gamma''_{k}}{r-1}+ \#\Gamma^{2}_{i}\cdot 3B_{p}+ a_{i}\cdot 2A_{r}$$	where $A_{r}$ is the constant given in Proposition~\ref{Burguet-Yomdin reparametrization}, which coincides with that in Proposition~\ref{Corollary 2 Burguet-Yomdin reparametrization}.

		\end{enumerate}

\end{Claim}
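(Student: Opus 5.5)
We construct the families $\Theta_i$ by induction on $i$, following the enumeration $0=a_1<\dots<a_{l_0}=n_E$. For the base case we take $\Theta_1=\{\mathrm{id}\}$. Since $\sigma$ is $\varepsilon_f$-bounded and $a_1=0$, all four properties hold trivially; the bound in (4) is $\log 1=0$.

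For the inductive step, assume $\Theta_{i-1}$ has been built. There are two cases according to the type of $a_i$.

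\emph{Case 1: $a_i\notin\Gamma^2_i$.} Here $a_i$ is a trapping or neutral time. By construction of $\mathcal A$, this forces $a_i=a_{i-1}+1$, so we advance a single step of $f$. Fix $\theta_{i-1}\in\Theta_{i-1}$. The curve $f^{a_{i-1}}\circ\sigma\circ\theta_{i-1}$ is $\varepsilon_f$-bounded by (3). We apply Proposition~\ref{Burguet-Yomdin reparametrization} to it with $(a,b)=(\gamma'_{a_{i-1}},\gamma''_{a_{i-1}})$; these are the single-step data prescribed in the definition of $\Omega^{p,m,L,s}_{n,E,\gamma}$ at times outside $([m,n)\cap E)\cup\Delta$.

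This produces affine maps $\theta$ with the following properties:
\begin{itemize}
\item $|\theta'|\le e^{-(\gamma'_{a_{i-1}}-\gamma''_{a_{i-1}}+3r)/(r-1)}$, which gives (2);
\item at most $e^{A_r+(\gamma'_{a_{i-1}}-\gamma''_{a_{i-1}})/(r-1)}$ of them are needed;
\item the images $f^{a_i}\circ\sigma\circ\theta_{i-1}\circ\theta$ are bounded.
\end{itemize}
The contraction by $|\theta'|$ is exactly what upgrades "bounded'' to "$\varepsilon_f$-bounded''. Indeed, the length of the image is at most $\|Df\|\,\|d(f^{a_{i-1}}\sigma\theta_{i-1})\|\,|\theta'|$. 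Because $\gamma'_{a_{i-1}}\ge\log\|Df_{f^{a_{i-1}}x}\|$, the factor $e^{-3r/(r-1)}$ absorbs the constants; this is the stated purpose of the extra $3r$. If this falls short, we further subdivide by an $r$-dependent number of affine pieces, which costs at most $A_r$ more. The count then grows by at most $A_r+(\gamma'-\gamma'')/(r-1)\le 2A_r+(\gamma'-\gamma'')/(r-1)$, consistent with (4). Pieces missing $\Omega$ are discarded, which yields (1).

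\emph{Case 2: $a_i\in\Gamma^2_i$.} Then $a_{i-1}\in\Delta$, and $[a_{i-1}+1,a_i]$, or a block of this form, is a geometric interval contained in $[m,n_E]\cap E$. The step proceeds in three parts.
\begin{enumerate}
\item We first refine each $\theta_{i-1}$ by the family $\hat\Theta$, so that $f^{a_{i-1}}\circ\sigma\circ\theta_{i-1}\circ\hat\theta$ is $\varepsilon_{f^p}$-bounded. This costs $\log B_p$.
\item We cover $f^{a_{i-1}}(M)$ by $(a_i-a_{i-1},\varepsilon_{f^p})$-dynamical balls taken from a minimal spanning set. This costs at most $B_p+(a_i-a_{i-1})(h_{\rm top}(f)+A_r)$.
\item In each ball we apply Proposition~\ref{Corollary 2 Burguet-Yomdin reparametrization} to $f^p$-data over the interval. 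This costs at most $B_p+(a_i-a_{i-1})A_r+\sum_k(\gamma'_k-\gamma''_k)/(r-1)$, using the $\frac1p\lceil\cdot\rceil$ data recorded on $([m,n)\cap E)\setminus\Gamma^2_{l_0}$ and on $\Delta$.
\end{enumerate}
Item (2) of that proposition gives $\varepsilon_{f^p}$-boundedness, hence $\varepsilon_f$-boundedness, at time $a_i$.

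Summing these costs, the increment to $\log\#\Theta$ is at most
\[
\#([a_{i-1}+1,a_i]\cap E)\,h_{\rm top}(f)+3B_p+2(a_i-a_{i-1})A_r+\sum_{a_{i-1}\le k<a_i}\frac{\gamma'_k-\gamma''_k}{r-1}.
\]
This matches the accounting $\#\Gamma^2_i\cdot3B_p$ in (4), together with the entropy term indexed by $[m,a_i]\cap E$. No constraint of type (2) is required in this case.

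The main obstacle is the use of Proposition~\ref{Corollary 2 Burguet-Yomdin reparametrization} in Case 2, for two reasons. First, it is stated with the $f^p$-derivative data along the orbit of a \emph{single} point $x$, and we must check that the prescribed values $\gamma$ on $\Omega$ make the bound uniform over all points in a given ball. Second, the time length and the iterate $p$ need not match, so we must reconcile them, for instance by treating the interval in $f$-steps while grouping into $p$-blocks, with edge effects absorbed into $B_p$ and $A_r$. We would resolve the first issue by applying the proposition at one chosen point of $\Omega$ in each ball and noting that its error term depends only on the $\gamma$-data. Summing the increments over $i$ then gives (4) by telescoping.
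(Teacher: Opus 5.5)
Your Case~2 is essentially the paper's geometric-time step: refinement by $\hat\Theta$, a minimal $(a_i-a_{i-1},\varepsilon_{f^p})$-spanning set, then Proposition~\ref{Corollary 2 Burguet-Yomdin reparametrization}, with the same cost $3B_p+2(a_i-a_{i-1})A_r$. The gap is in Case~1, where you do not actually obtain property~(3).

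At a point of $\Omega^{p,m,L,s}_{n,E,\gamma}$, the piece produced by Proposition~\ref{Burguet-Yomdin reparametrization} satisfies
\[
\left\|d\left(f^{a_i}\circ\sigma\circ\theta_{i-1}\circ\theta\right)\right\|\lesssim e^{\gamma''_{a_{i-1}}}\cdot\varepsilon_f\cdot e^{-\frac{\gamma'_{a_{i-1}}-\gamma''_{a_{i-1}}+3r}{r-1}}=\varepsilon_f\cdot\exp\left(\frac{r\left(\gamma''_{a_{i-1}}-\gamma'_{a_{i-1}}/r\right)-3r}{r-1}\right).
\]
This is at most $\varepsilon_f$ only when the one-step value of $\rho'$ is at most about $3$. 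The inequality $\gamma'\ge\log\|Df\|$ does not help. What must be beaten is the expansion $e^{\gamma''}$ along the curve relative to $e^{\gamma'/r}$, and this fails, for example, at every step near a conformal periodic source, where $\rho'\approx(1-\frac1r)\log\|Df\|$.

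Your fallback subdivision does not repair this. It needs roughly $\exp\left(\max\{0,\frac{r\rho'-3r}{r-1}\}\right)$ extra pieces per step, a number that depends on $f$ and not only on $r$. Summed over trapping and neutral steps, these positive parts add costs of order $\log\|Df\|$ per step. That is incompatible with (4), and it would wipe out the Yomdin term $\frac{\log\|Df\|}{r}$ later on.

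The paper handles these steps by two mechanisms, neither of which appears in your proposal.

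\emph{Trapping times} ($a_i<m$; such times may well be geometric). Here the paper uses that the relevant points lie in $B(z_s,m,r_{z_s})$ with $r_{z_s}<\frac{\varepsilon_f}{100\|Df\|}$. One first restricts $f^{a_{i-1}}\circ\sigma\circ\theta_{i-1}$ to its part in $B(f^{a_{i-1}}(z_s),\frac{\varepsilon_f}{10\|Df\|})$. The bounded pieces from Proposition~\ref{Burguet-Yomdin reparametrization} then lie in $B(f^{a_i}(z_s),\frac{\varepsilon_f}{10})$, so after one further restriction they are $\varepsilon_f$-bounded at no extra cost in the count.

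\emph{Neutral times}. Here the argument is cumulative. Let $a_j$ be the last geometric time before $a_i$. Property~(2), which you list but never use, lets you write
\[
\theta_{i-1}\circ\theta=\theta_j\circ\widetilde\theta_j\circ\cdots\circ\widetilde\theta_{i-2}\circ\theta,
\]
where each factor contracts by $e^{-(\gamma'_{a_k}-\gamma''_{a_k}+3r)/(r-1)}$. By (3), $f^{a_j}\circ\sigma\circ\theta_j$ is $\varepsilon_f$-bounded. Lemma~\ref{property of geometric time}, adjusted for the ceilings, gives $\sum_{k=j}^{i-1}(\gamma''_{a_k}-\gamma'_{a_k}/r)<2(a_i-a_j)$. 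Hence the whole product is at most $2\varepsilon_f e^{-r(a_i-a_j)/(r-1)}\le\varepsilon_f$.

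Only the sum of $\rho'$ since the last geometric time is controlled, not its individual terms. That is exactly why (2) is carried through the induction and why the extra $3r$ is inserted; a step-by-step argument cannot work.
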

\begin{proof}[Proof of Claim]

	We start an inductive construction to build $\Theta_{0\leq i\leq l_{0}}$.
	\paragraph{\it{\bfseries\small  Initial step:}\\}
	For $i=1$ ($a_{1}=0$), the curve $\sigma$ is $\varepsilon_{f}$-bounded by our assumption. We define $\Theta_{1}:=\{{\rm Id:[0,1]\to[0,1]}\}$. The properties in the claim are trivially satisfied.
	\paragraph{\it{\bfseries \small Inductive step:}\\}
	Assume that we have families $\{\Theta_{k}\}_{1\leq k\leq i}$ of affine maps satisfying the four properties in the claim. We consider three cases:

	\begin{enumerate}
		\item $a_{i+1}\in \Gamma^{1}_{i+1}$.

		Note that in this case, $$a_{i+1}=a_{i}+1=i\in[0,m).$$ As a consequence, given any $\theta_{i}\in\Theta_{i}$, after composing with a suitable reparametrization, we may assume \footnote{This follows directly from the definition of $\Omega^{p,m,L,s}_{n, E,\gamma}$, since we only consider points lying in the dynamical ball $B(z_{s}, m, r_{z_{s}})$ where $r_{z_{s}}<\frac{\varepsilon_{f}}{100\left\|Df\right\|}$.} \begin{equation}\label{trapping case}
			f^{a_{i}}\circ\sigma\circ\theta_{i}\subset B\left(f^{a_{i}}\left(z_{s}\right), \frac{\varepsilon_{f}}{10\left\|Df\right\|}\right).
		\end{equation} Applying Proposition \ref{Burguet-Yomdin reparametrization} with $a=\gamma'_{a_{i}}, b=\gamma''_{a_{i}}$, we get a family $\Theta$ of affine maps from $[0,1]$ to itself with $$\log \#\Theta\leq \frac{\gamma'_{a_{i}}-\gamma''_{a_{i}}}{r-1}+A_{r} $$ such that  each $f^{a_{i+1}} \circ \sigma \circ \theta_{i} \circ \theta, \,\theta \in \Theta$ is bounded and $$|\theta'|\leq e^{-\frac{\gamma'_{a_{i}}-\gamma''_{a_{i}}+3r}{r-1}}.$$ By (\ref{trapping case}), $$f^{a_{i+1}} \circ \sigma \circ \theta_{i} \circ \theta\subset B\left(f^{a_{i+1}}\left(z_{s}\right), \frac{\varepsilon_{f}}{10}\right).$$ Hence we may assume, after composing $\theta$ with a suitable reparametrization \footnote{This can be shown directly from the definitions; see \cite[Lemma~13]{Bur24} and the references therein for a precise argument.},  that $f^{a_{i+1}} \circ \sigma \circ \theta_{i} \circ \theta$ is $\varepsilon_{f}$-bounded.  We then define $$\Theta_{i+1}:=\{\theta_{i}\circ\theta:~  \,\theta\in\Theta, \,\theta_{i}\in \Theta_{i}\}.$$ Note in this case,  $$\Gamma^{1}_{i+1}=\Gamma^{1}_{i}\cup\{a_{i+1}\},\quad \Gamma^{2}_{i+1}=\Gamma^{2}_{i},\quad \Gamma^{3}_{i+1}=\Gamma^{3}_{i}, \quad a_{i+1}-a_{i}=1,$$ and $$\#\left([m,a_{i}]\cap E\right)=\#\left([m,a_{i+1}]\cap E\right)=0.$$ Hence we have the desired bound on $\log \# \Theta_{i+1}$:
		$$\begin{aligned}
			\log \# \Theta_{i+1}
			&\leq \log \# \Theta_{i}+\# \log \Theta\\
			&\leq\#\left([m,a_{i+1}]\cap E\right)\cdot h_{\rm top}(f)+ \sum_{k<a_{i+1}}\frac{\gamma'_{k}-\gamma''_{k}}{r-1}+\#\Gamma^{2}_{i+1}\cdot 3B_{p}+ a_{i+1}\cdot 2A_{r}.
		\end{aligned}$$

		The remaining properties asserted in the claim are obvious.

	\item $a_{i+1}\in \Gamma^{2}_{i+1}$.

	In this case, $a_{i}\notin \Gamma^{2}_{i}$ and $(a_{i},a_{i+1}]\subset E$.

	Recall the definitions of $\varepsilon_{f^{p}}$, $B_p$ and $\hat{\Theta}$ ($\#\hat{\Theta}\leq B_{p}$) introduced before the claim.

	For any $\theta_{i}\in \Theta_{i}$ and $\hat{\theta}\in\hat{\Theta}$, $f^{a_{i}}\circ\sigma\circ\theta_{i}\circ\hat{\theta}$ is an $\varepsilon_{f^{p}}$-bounded curve. Next, we decompose each $f^{a_i}\circ \sigma \circ \theta_i \circ \hat{\theta}$ into smaller pieces, with the decomposition taken up to time $a_{i+1}-a_{i}$. We choose some $(a_{i+1}-a_{i}, \varepsilon_{f^{p}})$ spanning set $\Omega\subset f^{a_{i}}\circ\sigma\circ\theta_{i}\circ\hat{\theta}$ with $$\log\#\Omega=\log r(f,a_{i+1}-a_{i}, \varepsilon_{f^{p}})\leq B_{p}+\left(a_{i+1}-a_{i}\right)\cdot\left(h_{\rm top}(f)+A_{r}\right).$$ Note that we are only concerned with points $z\in f^{a_{i}}\circ\sigma\circ\theta_{i}\circ\hat{\theta}$ satisfying $$\frac{1}{p}\left\lceil\log\left\|Df^{p}_{f^{j}(z)}\right\|\right\rceil= \gamma'_{a_{i}+j}, \quad\frac{1}{p}\left\lceil\log\left\|Df^{p}\left(\hat{f}^{i}\left(\hat{z}\right)\right)\right\|\right\rceil=\gamma''_{a_{i}+j},\quad j\in [0, a_{i+1}-a_{i}).$$ where $\hat{z}:=(z,[v])$, with $v$ denoting the unit tangent vector to the curve
	$f^{a_{i}}\circ\sigma\circ\theta_{i}\circ\hat{\theta}$ at $z$. Hence, without loss of generality, we many assume that all points in $\Omega$ satisfy these relations.

	Applying Proposition \ref{Corollary 2 Burguet-Yomdin reparametrization} to each point $z\in \Omega$ with $n=a_{i+1}-a_{i}$, we have a family $\Theta_{z}$ of affine maps from $[0,1]$ to itself with the following properties:
	\begin{enumerate}
		\item $B(z,a_{i+1}-a_{i},\varepsilon_{f^{p}})\cap f^{a_{i}}\circ\sigma\circ\theta_{i}\circ\hat{\theta}\subset \cup_{\theta\in \Theta_{z}} f^{a_{i}}\circ\sigma\circ\theta_{i}\circ\hat{\theta}\circ\theta$,
		\item $f^{k}\circ f^{a_{i}}\circ\sigma\circ\theta_{i}\circ\hat{\theta}\circ\theta$ is $\varepsilon_{f^{p}}$-bounded for any $\theta\in\Theta_{z}$ and $0\leq k\leq a_{i+1}-a_{i}$,

		\item We have $$\log\#\Theta_{z}\leq B_{p}+\left(a_{i+1}-a_{i}\right)\cdot A_{r}+\frac{1}{r-1}\cdot\sum_{k=a_{i}}^{a_{i+1}-1}\left(\gamma'_{k}-\gamma''_{k}\right).$$

	\end{enumerate}
	Note that  $$\Gamma^{1}_{i+1}=\Gamma^{1}_{i},\quad \Gamma^{2}_{i+1}=\Gamma^{2}_{i}\cup\{a_{i+1}\},\quad \Gamma^{3}_{i+1}=\Gamma^{3}_{i}.$$ Since
	the half-open interval $(a_i,a_{i+1}]\subset E$, we have $$\#\left([m,a_{i+1}]\cap E\right)=\#\left([m,a_{i}]\cap E\right)+a_{i+1}-a_{i}.$$

We then define $$\Theta_{i+1}:=\left\{\theta_{i}\circ\hat{\theta}\circ\theta:~ \theta_{i}\in \Theta_{i},\,\hat{\theta}\in\hat{\Theta},\, \theta\in\Theta_{z}, \, z\in\Omega\right\}$$ and we have
$$\begin{aligned}
	\log \# \Theta_{i+1}
	&\leq \log \# \Theta_{i}+ \log \#\hat{\Theta}+\max_{z\in\Omega}\log\#\Theta_{z}+\log\#\Omega\\
	&\leq\#\left([m,a_{i+1})\cap E\right)\cdot h_{\rm top}(f)+ \sum_{k<a_{i+1}}\frac{\gamma'_{k}-\gamma''_{k}}{r-1} + \#\Gamma^{2}_{i+1}\cdot 3B_{p}+ a_{i+1}\cdot 2A_{r}.
\end{aligned}$$

The remaining properties asserted in the claim are obvious. The terms "$3B_{p}, 2A_{r}$" arise from this case.

		\item $a_{i+1}\in \Gamma^{3}_{i+1}$.

		In this case, $a_{i+1}-a_{i}=1$.

		Given any $\theta_{i}\in\Theta_{i}$, for the $\varepsilon_{f}$-bounded curve $f^{a_{i}}\circ\sigma\circ\theta_{i}$, we apply Proposition \ref{Burguet-Yomdin reparametrization} with $a=\gamma'_{a_{i}}, b=\gamma''_{a_{i}}$ and we get a family $\Theta$ of affine maps from $[0,1]$ to itself with $$\log \#\Theta\leq \frac{\gamma'_{a_{i}}-\gamma''_{a_{i}}}{r-1}+ A_{r}$$ such that  each $f^{a_{i+1}} \circ \sigma \circ \theta_{i} \circ \theta, \,\theta \in \Theta$ is bounded and $$|\theta'|\leq e^{-\frac{\gamma'_{a_{i}}-\gamma''_{a_{i}}+3r}{r-1}}.$$  Next we show that $f^{a_{i+1}}\circ \sigma\circ\theta_{i}\circ\theta,\, \theta\in\Theta$ is actually $\varepsilon_{f}$-bounded. Define $$j=\max\left\{k:~  a_{k}\in  E, \,\,a_{k}<a_{i+1}\right\}.$$ If the set in the right side is empty, we set $j=1$ which gives $a_{j}=a_{1}=0$.	We also note that $(a_{j}, a_{i+1}]\cap E=\emptyset$ by the maximality of $j$.  Consequently, $$a_{i+1}-a_{j}=i+1-j.$$ For any $x\in \Omega^{p,m,L,s}_{n, E,\gamma}$, since $a_{i+1}$ is not a geometric time while $a_{j}$ is the largest geometric time before $a_{i+1}$ \footnote{$a_{j}$ is the right endpoint of a geometric interval and is not necessarily larger than $m$.}, by Lemma \ref{property of geometric time}, $$\sum_{k=a_{j}}^{a_{i+1}-1}\rho'\left(\hat{f}^{k}(\hat{x})\right)=\sum_{k=a_{j}}^{a_{i}}\rho'\left(\hat{f}^{k}(\hat{x})\right)=\sum_{k=j}^{i}\rho'\left(\hat{f}^{a_{k}}(\hat{x})\right)< a_{i+1}-a_{j}=i+1-j.$$ Take the ceiling operation into consideration, we then have
		\begin{equation}\label{ceiling operation}
		\sum_{k=j}^{i}\gamma''_{a_{k}}-\frac{\gamma'_{a_{k}}}{r}<2\left(a_{i+1}-a_{j}\right)=2\left(i+1-j\right).
		\end{equation}

		Let $t\in[0,1]$ be such that $\sigma\circ\theta_{i}\circ\theta(t)=x\in \Omega^{p,m,L,s}_{n, E,\gamma}$ for some $x$. By the second assumption in the claim, there are some $\theta_{j}\in \Theta_{j}$ and some affine maps $\widetilde{\theta}_{j},\widetilde{\theta}_{j+1},\cdots, \widetilde{\theta}_{i-1}$ with  $$|\widetilde{\theta}'_{k}|\leq e^{-\frac{\gamma'_{a_{k}}-\gamma''_{a_{k}}+3r}{r-1}},\quad j\leq k\leq i-1$$ such that $$\theta_{i}=\theta_{j}\circ \widetilde{\theta}_{j}\circ\widetilde{\theta}_{j+1}\circ\cdots\circ\widetilde{\theta}_{i-1}.$$ We have
		$$\begin{aligned}
		\left\|d\left(f^{a_{i+1}}\circ \sigma\circ\theta_{i}\circ\theta\right)\right\|
			&\leq 2\left\|d_{t}\left(f^{a_{i+1}}\circ \sigma\circ\theta_{i}\circ\theta\right)\right\|\text{\quad by (\ref{Property of boundedness})}\\
			&\leq 2\left\|Df^{a_{i+1}-a_{j}}\left(\hat{f}^{a_{j}}(\hat{x})\right)\right\|\cdot \left\|d_{t}\left(f^{a_{j}}\circ \sigma\circ\theta_{i}\circ\theta\right)\right\|\\
			&\leq 2\left\|Df^{a_{i+1}-a_{j}}\left(\hat{f}^{a_{j}}(\hat{x})\right)\right\|\cdot \left\|d\left(f^{a_{j}}\circ \sigma\circ\theta_{i}\right)\right\|\cdot|\theta'|\\
			&\leq 2\left\|Df^{a_{i+1}-a_{j}}\left(\hat{f}^{a_{j}}(\hat{x})\right)\right\|\cdot \left\|d\left(f^{a_{j}}\circ \sigma\circ\theta_{j}\right)\right\|\cdot\left(\prod_{k=j}^{i-1}|\widetilde{\theta}'_{k}|\right)\cdot|\theta'|\\
			&\leq 2\exp{\left(\sum_{k=j}^{i}\gamma''_{a_{k}}\right)}\cdot\varepsilon_{f}\cdot \exp{\left(\sum_{k=j}^{i}-\frac{\gamma'_{a_{k}}-\gamma''_{a_{k}}+3r}{r-1}\right)}\\
			&=2\varepsilon_{f}\cdot\exp{\left(\sum_{k=j}^{i}\gamma''_{a_{k}}-\frac{\gamma'_{a_{k}}-\gamma''_{a_{k}}+3r}{r-1}\right)}\\
			&=2\varepsilon_{f}\cdot\exp{\left(\sum_{k=j}^{i}\frac{r\cdot\left(\gamma''_{a_{k}}-\frac{\gamma'_{a_{k}}}{r}\right)-3r}{r-1}\right)}\\
			&=2\varepsilon_{f}\cdot\exp{\left(\frac{r}{r-1}\cdot\left(\sum_{k=j}^{i}\gamma''_{a_{k}}-\frac{\gamma'_{a_{k}}}{r}\right)-\frac{3r\left(i-j+1\right)}{r-1}\right)}\\
			&\leq2\varepsilon_{f}\cdot\exp{\left(-\frac{r\cdot\left(i-j+1\right)}{r-1}\right)}\text{\quad by (\ref{ceiling operation})}\\
			&\leq2\varepsilon_{f}\cdot e^{-1}\\
			&\leq\varepsilon_{f}.
		\end{aligned}$$ This shows that $f^{a_{i+1}}\circ \sigma\circ\theta_{i}\circ\theta$ is not only bounded, but $\varepsilon_{f}$-bounded. One can also see from the above estimates the technical role played by the term " $3r$ " in Proposition~\ref{Burguet-Yomdin reparametrization}.

		We define $\Theta_{i+1}:=\{\theta_{i}\circ\theta:~\theta_{i}\in \Theta_{i}, \theta\in \Theta\}$. Then we have  $$\Gamma^{1}_{i+1}=\Gamma^{1}_{i},\quad \Gamma^{2}_{i+1}=\Gamma^{2}_{i},\quad \Gamma^{3}_{i+1}=\Gamma^{3}_{i}\cup\{a_{i+1}\},\quad a_{i+1}-a_{i}=1.$$ Note $$\#\left([m,a_{i+1}]\cap E\right)=\#\left([m,a_{i}]\cap E\right).$$  Hence we have the desired bound on $\log \# \Theta_{i+1}$:
		$$\begin{aligned}
			\log \# \Theta_{i+1}
			&\leq \log \# \Theta_{i}+\# \log \Theta\\
			&\leq\#\left([m,a_{i+1}]\cap E\right)\cdot h_{\rm top}(f)+ \sum_{k<a_{i+1}}\frac{\gamma'_{k}-\gamma''_{k}}{r-1} + \#\Gamma^{2}_{i+1}\cdot 3B_{p}+ a_{i+1}\cdot 2A_{r}.
		\end{aligned}$$
		The remaining properties asserted in the claim are obvious.
	\end{enumerate}
The proof of the claim is complete.
\end{proof}

We define a subset of $O^{p,m,L}_{n,E,\beta}$ associated with a fixed periodic source $z_s$ by\footnote{In the case $m=0$, we define $O^{p,0,L,s}_{n, E,\gamma}$ by replacing the condition "$x\in B(z_{s}, m, r_{z_{s}})$" in $O^{p,m,L,s}_{n, E,\gamma}$ with "$x\notin B(z_{s}, r_{z_{s}})$", as before.}
$$\begin{aligned}
	O^{p,m,L,s}_{n, E,\beta}:=\Big\{x\in\sigma:~&  \left(E(\hat{x})\cap [0,n)\right)^{L}=E,\,\,x\in B(z_{s}, m, r_{z_{s}}) \text{ and }\\ &  \frac{1}{p}\left\lceil\log\left\|Df^{p}_{f^{i}(x)}\right\|\right\rceil= \beta'_{i}, \,\frac{1}{p}\left\lceil\log\left\|Df^{p}\left(\hat{f}^{i}\left(\hat{x}\right)\right)\right\|\right\rceil=\beta''_{i},\, i\in [m,n)\cap E,\\ & \left\lceil\log\left\|Df_{f^{i}(x)}\right\|\right\rceil=\beta'_{i}, \,\left\lceil\log\left\|Df\left(\hat{f}^{i}\left(\hat{x}\right)\right)\right\|\right\rceil=\beta''_{i},\, i\in [0,n)\setminus\left([m,n)\cap E\right)\Big \}.
\end{aligned}$$
We note that by definition,
\begin{equation}\label{comparison between O and Omega}
	O^{p,m,L,s}_{n, E,\beta}\subset\bigcup_{\gamma}	\Omega^{p,m,L,s}_{n, E,\gamma}
\end{equation} where the union is taken over all possible $\gamma:=\left\{\left(\gamma'_{i},\, \gamma''_{i}\right)\right\}_{i\in [0,n)}$ such that $$\gamma'_{i}=\beta'_{i},\quad \gamma''_{i}=\beta''_{i},\quad i\in [0,n)\setminus\left(\Delta\cup\Gamma_{l_{0}}^{2}\right).$$ There are at most $$\left(\log\left\|Df\right\|+\log\left\|Df^{-1}\right\|+2\right)^{\frac{2n}{L}}$$ such $\gamma$, since both $\Delta$ and $\Gamma_{l_0}^2$ have cardinality at most $n/L$. Indeed, the set $\Gamma_{l_0}^2$ records the right endpoints of geometric intervals, and $\Delta$ has the same cardinality as $\Gamma_{l_0}^2$. The number of geometric intervals (including isolated ones) is relatively small-at most $n/L$-because the complement of $E$ in $[0,n)$ is a disjoint union of intervals whose lengths are all larger than $L$.

As a consequence of (\ref{comparison between O and Omega}) and the claim above, there is a family $\widetilde{\Theta}_{l_{0}}$ of affine maps from $[0,1]$ to itself such that

\begin{enumerate}
	\item  $O^{p,m,L,s}_{n, E,\beta}\subset \cup_{\theta\in \widetilde{\Theta}_{l_{0}}} \sigma\circ\theta$,
	\item for any $\theta\in\widetilde{\Theta}_{l_{0}}$, $f^{n_{E}}\circ\sigma\circ\theta$ is $\varepsilon_{f}$-bounded,
	\item recalling $a_{l_{0}}=n_{E}$,
	\begin{equation}\label{log widetilde{Theta}{l_{0}}}
		\begin{aligned}
		\log\# \widetilde{\Theta}_{l_{0}}
		&\leq\#\left([m,n)\cap E\right)\cdot h_{\rm top}(f)+ \sum_{k\in[0, n_{E})}\frac{\beta'_{k}-\beta''_{k}}{r-1}+ \frac{n}{L}\cdot 3B_{p}+ n\cdot 2A_{r}\\
		&\quad + \frac{2n}{L}\cdot\log \left(\log\left\|Df\right\|+\log\left\|Df^{-1}\right\|+2\right)+\frac{2n}{L}\cdot \frac{\log\left\|Df\right\|+\log\left\|Df^{-1}\right\|+2}{r-1}.
	\end{aligned}
\end{equation}	Note that the last term above $$\frac{2n}{L}\cdot \frac{\log\left\|Df\right\|+\log\left\|Df^{-1}\right\|+2}{r-1}$$ is introduced to to compensate for the discrepancy between (at the times $\Delta\cup\Gamma_{l_{0}}^{2}$) $$\sum_{k\in[0, n_{E})}\frac{\gamma'_{k}-\gamma''_{k}}{r-1} \quad\text{ and} \quad\sum_{k\in[0, n_{E})}\frac{\beta'_{k}-\beta''_{k}}{r-1}.$$

\end{enumerate}

\noindent\textbf{\large Step 2: Estimates along the time decomposition}

Next, we estimate from above the quantity: $$\sum_{k\in[0, n_{E})}\frac{\beta'_{k}-\beta''_{k}}{r-1}.$$
\begin{itemize}

	\item {\bf \small Geometric Bound:}  
	
	The estimation of the geometric component $\sum_{k\in [m,n)\cap E}\frac{\beta'_{k}-\beta''_{k}}{r-1}$ is rather delicate. We will show in  Proposition \ref{volume growth for time n} that this term is negligible, using a measure-theoretic argument. We retain this term in  Proposition~\ref{grow rate for major component in finite step n}.

	\item {\bf \small Neutral Bound:}

	Recall that $n_{E}$ is the maximal geometric time in $[0,n)$ (if no such geometric time exists, we set $n_{E}=0$). We consider the times in $[0,n_{E})\setminus E$ \footnote{This part contains the neutral component (i.e., the non-geometric times $[0,m)\setminus E$) of the trapping times $[0,m)$ and is treated here for technical convenience. The estimation of the remaining, geometric  component (i.e., $[0,m)\cap E$) will be carried out in {\bf Trapping Bound} below.}. We decompose $[0,n_{E})\setminus E$  into maximal disjoint intervals: $$[0,n_{E})\setminus E=\bigcup[c_{i}, d_{i}).$$ We note that $d_{i}$ must be a geometric time by maximality and consequently, for any $x\in O^{p,m,L,s}_{n, E,\beta}$, $$0<d_{i}-c_{i}\leq \sum_{k\in [c_{i},d_{i})}\rho'\left(\hat{f}^{k}(\hat{x})\right)\leq 10(d_{i}-c_{i})+\sum_{k\in [c_{i},d_{i})}\beta''_{k}-\frac{\beta'_{k}}{r}.$$ Here the term "$10(d_{i}-c_{i})$" is introduced to compensate for the discrepancy between the ceiling operation and the actual value. The above inequalities imply $$\sum_{k\in [c_{i},d_{i})}-\beta''_{k}\leq 10(d_{i}-c_{i})-\sum_{k\in [c_{i},d_{i})}\frac{\beta'_{k}}{r}.$$ Adding up all the intervals, we get $$\sum_{k\in [0,n_{E})\setminus E}-\beta''_{k}\leq 10\# \left([0,n_{E})\setminus E\right)-\sum_{k\in [0,n_{E})\setminus E}\frac{\beta'_{k}}{r}.$$ Hence, 

	 $$\begin{aligned}
	\sum_{k\in [0,n_{E})\setminus E}\frac{\beta'_{k}-\beta''_{k}}{r-1}
		&\leq \frac{10\# \left([0,n_{E})\setminus E\right)}{r-1}+\sum_{k\in [0,n_{E})\setminus E}\frac{\beta'_{k}}{r}\\
		&\leq \frac{10\# \left([0,n_{E})\setminus E\right)}{r-1}+\# \left([0,n_{E})\setminus E\right)\cdot\frac{\log\left\|Df\right\|}{r}\\
		&=\# \left([0,n_{E})\setminus E\right)\cdot\left(\frac{10}{r-1}+\frac{\log\left\|Df\right\|}{r}\right).
	\end{aligned}$$

\item {\bf \small Trapping Bound:} 

The estimate of $\frac{\beta'_{k}-\beta''_{k}}{r-1}$ for all $k\in [0,m)\setminus E$ has already been addressed in the {\bf Neutral Bound} above. It remains to treat the case where $k\in [0,m)\cap E$. We decompose $[0,m)\cap E$ into maximal disjoint intervals: $$[0,m)\cap E=\left(\bigcup [c_{i}, d_{i}]\right)\bigcup [m_{E}, m-1]$$ where $[m_{E}, m-1]$ denotes the last interval if $m-1, m\in E$ (which means that the geometric interval is truncated at $m-1$).
For each interval $[c_{i}, d_{i})$, by an almost identical argument as in the {\bf Neutral Bound}, we have $$\sum_{k\in [c_{i}, d_{i})}-\beta''_{k}\leq 10(d_{i}-c_{i})-\sum_{k\in [c_{i}, d_{i})}\frac{\beta'_{k}}{r}.$$
Adding up all the intervals, we get $$\sum_{i}\sum_{k\in [c_{i}, d_{i})}-\beta''_{k}\leq 10m - \sum_{i}\sum_{k\in [c_{i}, d_{i})}\frac{\beta'_{k}}{r}.$$

For the last interval $[m_{E}, m-1]$, there is some geometric time $j\in (m-1-L,m-1]$. As a consequence, \begin{equation}\label{tail bound for m}
	\sum_{k\in [m_{E}, m-1]}-\beta''_{k}\leq 10m-\sum_{k\in [m_{E}, m-1]}\frac{\beta'_{k}}{r}+L\cdot \left(\log\left\|Df\right\|+\log\left\|Df^{-1}\right\|+2\right).
\end{equation}

Hence,
	 $$\begin{aligned}
	&\quad\sum_{k\in [0,m)\cap E}\frac{\beta'_{k}-\beta''_{k}}{r-1}\\
	&= \sum_{i}\sum_{k\in [c_{i}, d_{i})}\frac{\beta'_{k}-\beta''_{k}}{r-1}+\sum_{d_{i}}\frac{\beta'_{d_{i}}-\beta''_{d_{i}}}{r-1}+\sum_{k\in [m_{E}, m-1]}\frac{\beta'_{k}-\beta''_{k}}{r-1}\\
	&\leq \frac{20m}{r-1}+\sum_{i}\sum_{k\in [c_{i}, d_{i})}\frac{\beta'_{k}}{r}+\sum_{d_{i}}\frac{\beta'_{d_{i}}-\beta''_{d_{i}}}{r-1}+\sum_{k\in [m_{E}, m-1]}\frac{\beta'_{k}}{r}+L\cdot\frac{ \log\left\|Df\right\|+\log\left\|Df^{-1}\right\|+2}{r-1}\\
	&\leq \#\left([0,m)\cap E\right)\cdot\frac{\log\left\|Df\right\|}{r}\\
	&\quad+\frac{20m}{r-1}+\frac{m}{L}\cdot\frac{\log\left\|Df\right\|+\log\left\|Df^{-1}\right\|+2}{r-1}+L\cdot\frac{ \log\left\|Df\right\|+\log\left\|Df^{-1}\right\|+2}{r-1}.
\end{aligned}$$
Note that, in the last step, we use the fact that there are relatively few (at most $m/L$) right endpoints $d_{i}$ among the maximal intervals in $[0,m)\cap E$, since the complement of $E$ in $[0,m)$ is a disjoint union of intervals whose lengths exceed $L$ (except possibly for the first and the last ones).
\item {\bf \small Tail Bound:}

For the times in $[n_{E},n)$, if $n_{E}<n-1$ which implies that $n-1$ is not a geometric time,  for any $x\in O^{p,m,L,s}_{n, E,\beta}$, by Lemma \ref{property of geometric time}, $$\sum_{k\in [n_{E},n-1)}\rho'\left(\hat{f}^{k}(\hat{x})\right)< n-n_{E}.$$ 

Taking the ceiling operation into consideration, we then have
\begin{equation}\label{direct assumption}
	\sum_{k\in [n_{E},n)}\beta''_{k}-\frac{\beta'_{k}}{r}\leq 2n +10\left(\log\left\|Df\right\|+\log\left\|Df^{-1}\right\|+2\right).
\end{equation} Note that the extra term $10\left(\log\left\|Df\right\|+\log\left\|Df^{-1}\right\|+2\right)$ accounts for the inclusion of the time $n-1$ in the sum and for the mismatch between $\beta''_{n_{E}}-\frac{\beta'_{n_{E}}}{r}$ and $\rho'\!\left(\hat{f}^{n_{E}}(\hat{x})\right)$.

For the tail times, instead of estimating $\sum_{k\in [n_{E},n)}\frac{\beta'_{k}-\beta''_{k}}{r-1}$, we directly consider the volume growth. Given any reparametrization $\theta\in \widetilde{\Theta}_{l_{0}}$, recalling the last paragraph in {\bf Step 1},  $f^{n_{E}}\circ\sigma\circ\theta$ is an $\varepsilon_{f}$-bounded curve and consequently, its volume (length) is less than 1. Define \footnote{We ignore the value at time $n_E$ in the definition of $\Omega_\theta$ for technical convenience, which results in a slightly larger set. Since this omission concerns only a single time instance, it does not affect the asymptotic regime.}

$$\begin{aligned}
	\Omega_{\theta}:=\Big\{t\in [0,1]:~& x:=\sigma\circ\theta (t),\quad\left\lceil\log\left\|Df_{f^{k}(x)}\right\|\right\rceil=\beta'_{k},\\ &  \left\lceil\log\left\|Df\left(\hat{f}^{k}\left(\hat{x}\right)\right)\right\|\right\rceil=\beta''_{k},\quad k\in (n_{E},n)\Big \}.
\end{aligned}$$ 
Note that our focus is solely on the restriction $\sigma\circ\theta|_{\Omega_{\theta}}$. By (\ref{direct assumption}), we have
 $$\begin{aligned}
	&\quad\log{\rm Vol}\left(f^{n}\circ\sigma\circ\theta|_{\Omega_{\theta}}\right)\\
	&\leq\log{\rm Vol}\left(f^{n_{E}}\circ\sigma\circ\theta\right)+\sum_{k\in [n_{E},n)}\beta''_{k}+10\left(\log\left\|Df\right\|+\log\left\|Df^{-1}\right\|+2\right)\\
	&\leq\sum_{k\in [n_{E},n)}\frac{\beta'_{k}}{r}+2n+20\left(\log\left\|Df\right\|+\log\left\|Df^{-1}\right\|+2\right)\\
	&\leq \left(n-n_{E}\right)\cdot\frac{\log\left\|Df\right\|}{r}+2n+20\left(\log\left\|Df\right\|+\log\left\|Df^{-1}\right\|+2\right).
\end{aligned}$$ Note that the term "$10\left(\log\left\|Df\right\|+\log\left\|Df^{-1}\right\|+2\right)$" in the first inequality is introduced to compensate for the mismatch at time $n_{E}$.

\end{itemize}

	\medskip
	\noindent\textbf{\large Step 3: Conclusion.}
	\medskip

By the estimates in {\bf Step 2},
 $$\begin{aligned}
	\sum_{k\in [0,n_{E})}\frac{\beta'_{k}-\beta''_{k}}{r-1}
	&=\sum_{k\in [m,n_{E})\cap E}\frac{\beta'_{k}-\beta''_{k}}{r-1}+\sum_{k\in [0,n_{E})\setminus E}\frac{\beta'_{k}-\beta''_{k}}{r-1} + \sum_{k\in [0,m)\cap E}\frac{\beta'_{k}-\beta''_{k}}{r-1}\\
	&\leq\sum_{k\in [m,n_{E})\cap E}\frac{\beta'_{k}-\beta''_{k}}{r-1}+\left(m + \#\left([m,n_{E})\setminus E\right)\right)\cdot\frac{\log\left\|Df\right\|}{r}\\
	&\quad + \frac{30n}{r-1} +\frac{n}{L}\cdot\frac{\log\left\|Df\right\|+\log\left\|Df^{-1}\right\|+2}{r-1}+L\cdot\frac{ \log\left\|Df\right\|+\log\left\|Df^{-1}\right\|+2}{r-1}.
\end{aligned}$$

Since $\varepsilon_{f}$-bounded curve has volume (length) less than 1, by the {\bf Tail Bound} in {\bf Step 2}, $$\log{\rm Vol}\left(f^{n}\left(O^{p,m,L,s}_{n, E,\beta}\right)\right)\leq \log\# \widetilde{\Theta}_{l_{0}}+ \left(n-n_{E}\right)\cdot\frac{\log\left\|Df\right\|}{r}+2n+20\left(\log\left\|Df\right\|+\log\left\|Df^{-1}\right\|+2\right).$$

By the definition of $n_{E}$,  $$ m + \#\left([m,n_{E})\setminus E\right)+n-n_{E}= m + \#\left([m,n)\setminus E\right).$$
Hence, by (\ref{log widetilde{Theta}{l_{0}}}) we have
		 $$\begin{aligned}
	\log{\rm Vol}\left(f^{n}\left(O^{p,m,l}_{n, E,\beta}\right)\right)
	&\leq \log n + \max_{1\leq s\leq n}\log{\rm Vol}\left(f^{n}\left(O^{p,m,L,s}_{n, E,\beta}\right)\right)\\
	&\leq \log n + \log\# \widetilde{\Theta}_{l_{0}} + \left(n-n_{E}\right)\cdot\frac{\log\left\|Df\right\|}{r}+2n+20\left(\log\left\|Df\right\|+\log\left\|Df^{-1}\right\|+2\right)\\
	&\leq \#\left([m,n)\cap E\right)\cdot h_{\rm top}(f)+\left(m + \#\left([m,n)\setminus E\right)\right)\cdot\frac{\log\left\|Df\right\|}{r}+\sum_{k\in [m,n)\cap E}\frac{\beta'_{k}-\beta''_{k}}{r-1}\\
	&\quad + \frac{n}{L}\cdot 3B_{p} + n\cdot \left(2A_{r}+3\right)+ \frac{30n}{r-1}\\
	&\quad+\left(\frac{3n}{L}\cdot\frac{1}{r-1}+\frac{L}{r-1}+\frac{2n}{L}+20\right)\cdot\left(\log\left\|Df\right\|+\log\left\|Df^{-1}\right\|+2\right).
\end{aligned}$$

Define $$C_{n,L}:=\frac{n}{L}\cdot 3B_{p} + n\cdot \left(2A_{r}+3\right)+ \frac{30n}{r-1} +\left(\frac{3n}{L}\cdot\frac{1}{r-1}+\frac{L}{r-1}+\frac{2n}{L}+20\right)\cdot\left(\log\left\|Df\right\|+\log\left\|Df^{-1}\right\|+2\right)$$ and $$ C_{r}:=2A_{r}+3+\frac{30}{r-1}$$ and note that $$\limsup_{L \to +\infty}\limsup_{n \to +\infty}\frac{C_{n,L}}{n}=C_{r}.$$

The proof of Proposition~\ref{grow rate for major component in finite step n} is then complete.

\end{proof}

\subsection{Estimation on the geometric component and proof of Theorem \ref{main theorem of decomposition of volume growth of submanifolds}}

Recall that given a subset $E\subset \mathbb{N}$ and a number $L\in\mathbb{N}$, we defined
$$E^{L}:=\bigcup [i,j]$$ where the union runs over all $i,j\in E$ with $0\leq j-i\leq L$.

Given a subset $E\subset[0,n)$ (assume that $n$ is given in advance), we write $$\delta_{E}\left(\hat{x}\right):=\frac{1}{n}\sum_{i\in E}\delta_{\hat{f}^{i}(\hat{x})}$$ and we let $\delta_{E}\left(x\right)$ denote the corresponding projection, i.e., $$\delta_{E}\left(x\right):=\frac{1}{n}\sum_{i\in E}\delta_{f^{i}(x)}.$$

The following proposition shows that limits of empirical measures along geometric times are $\hat f$-invariant and project to invariant measures with at least one positive Lyapunov exponent.

Recall the geometric times in Definition \ref{definition of geometric time} and the maximal Lyapunov exponent of $x$: $$\lambda^{+}(f,x):=\limsup_{n\to+\infty}\frac{1}{n}\log\left\|Df^{n}_{x}\right\| .$$

\begin{Proposition}\label{limit measures are hyperbolic}
		Let $f$ be a $C^1$ diffeomorphism on a compact surface $M$. We consider a doubly indexed family of points $\hat{x}_{n,p}\in \hat{M}$ together with a family of geometric time sets $E_{n}\subset [0,n)$ (common to all indices $p$). Let $0\leq m_{n}\leq n$ be a sequence of integers and assume $$\delta_{[m_{n},n)\cap E_{n}^{L}}\left(\hat{x}_{n,p}\right)\xrightarrow{\text{$\quad n \quad$}}\hat{\mu}^{L}_{p},\quad\quad \hat{\mu}^{L}_{p}\xrightarrow{\text{$\quad L \quad$}}\hat{\mu}_{p},\quad\quad \hat{\mu}_{p}\xrightarrow{\text{$\quad p \quad$}}\hat{\mu}$$ for some Borel measures (not necessarily probability measures) $\hat{\mu}^{L}_{p}, \hat{\mu}_{p}$ and $\hat{\mu}$. Then $\hat{\mu}_{p}$ and $\hat{\mu}$ are $\hat{f}$-invariant. Moreover, for the projection $\mu$-almost every $x$, $\lambda^{+}(f,x)>0$ (assuming $\mu$ is not zero measure) and $\hat{\mu}$ is an unstable lift of $\mu$.
\end{Proposition}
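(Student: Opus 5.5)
Two things have to be shown: invariance, and the Lyapunov statement, which comes from the fact that geometric times force $\rho'$ to have average at least $1$.

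\emph{Invariance.} Write $F_n:=[m_n,n)\cap E_n^{L}$ and compare $\hat f_*\delta_{F_n}(\hat x_{n,p})$ with $\delta_{F_n}(\hat x_{n,p})$. Their difference is $\frac1n$ times a signed sum of Dirac masses located at the boundary points of the maximal intervals of $F_n$. By definition of $E^L$, the gaps between these maximal intervals have length larger than $L$, except possibly the first and last. Hence the number of intervals is at most $n/L+2$, and the total variation of the difference is at most $2(n/L+2)/n$. Letting $n\to\infty$ and using continuity of $\hat f$, this gives $\|\hat f_*\hat\mu^L_p-\hat\mu^L_p\|\le 2/L$, where the norm is the dual norm on continuous functions. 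Letting $L\to\infty$ shows that $\hat\mu_p$ is invariant, and letting $p\to\infty$ shows that $\hat\mu$ is invariant.

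\emph{Positivity of $\int\rho'$.} Fix a maximal interval $[c,d]$ of $E_n$ (more precisely of $(E_n\cap[0,n))^L$) on which $\hat x_{n,p}$ has geometric times at both ends, or the truncated version at $m_n$. For a geometric time $d$ of $\hat x$, the definition gives $\sum_{i=k}^{d-1}\rho'(\hat f^i\hat x)\ge d-k$ for every $k<d$. Taking $k=c$ shows that each maximal interval of $E^L$ contributes an average of $\rho'$ at least $1$. The only losses are one endpoint per interval and the truncation at $m_n$, and these cost $O(\|\rho'\|_\infty\,(1/L+1/n))$ in total. Hence
\[
\int\rho'\,d\delta_{F_n}(\hat x_{n,p})\ \ge\ \delta_{F_n}(\hat x_{n,p})(\hat M)-O(1/L).
\]
For the truncation I intend to use the geometric time $d$ itself, with $k=\max(c,m_n)$; this handles the cut automatically. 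Since $\rho'$ is continuous, passing to the limits gives $\int\rho'\,d\hat\mu\ge\hat\mu(\hat M)$.

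\emph{Upgrading to almost every point.} This is the main obstacle, because an integral inequality does not by itself give a pointwise one. The inequality above holds for the whole measure, but ergodic components could in principle have $\int\rho'\le 0$. To handle this I would localize the argument: apply the same interval estimate to the restriction of the empirical measures to any $\hat f$-invariant set, or to test functions $g\rho'$ with $g\ge 0$ invariant. More precisely, the geometric-time inequality says that along each geometric block, every backward partial sum from $k$ to $d$ is at least $d-k$. Hence the measure $\hat\mu$ satisfies a maximal-ergodic-type statement: the set where the backward Birkhoff averages of $\rho'$ stay below $1$ has zero $\hat\mu$-mass. I would use the standard trick from \cite{Bur24} to make this rigorous, namely that limits of empirical measures along geometric times give $\hat\mu$-a.e. $\liminf\frac1n\sum_{i<n}\rho'\circ\hat f^{-i}\ge 1$ (cf.\ \cite{BCS22}). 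Then for $\hat\mu$-a.e.\ $\hat x$ the Birkhoff average of $\rho$ is at least $1+\frac1r\lambda\ge1>0$, where $\lambda$ is the average of $\log\|Df\|$; this is exactly condition (\ref{Birkhoff average positive for hat mu}). So $\hat\mu$ is an unstable lift, and by the first bullet of Section \ref{tangent dynamics} its projection satisfies $\lambda^+(f,x)>0$ for $\mu$-a.e.\ $x$.
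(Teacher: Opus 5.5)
\paragraph{Review.} Your invariance argument matches the paper's: boundary Dirac masses of the at most $n/L$ maximal intervals of $F_n$, then limits in $n$, $L$, $p$. Your bound $\int\rho'\,d\hat\mu\ge\hat\mu(\hat M)$ is also correct. But the step that carries the proposition, passing from this integral bound to a statement for $\hat\mu$-almost every point, is not proved. You defer it to an unnamed ``standard trick'', and the localizations you suggest do not work. An $\hat f$-invariant set or function $g$ for the limit $\hat\mu$ is only Borel, so $\int g\rho'\,d\delta_{F_n}(\hat x_{n,p})$ has no reason to converge to $\int g\rho'\,d\hat\mu$. Moreover, the empirical measures are not invariant, so there is no finite-$n$ object to which the localization could be applied. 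Likewise, ``the backward averages of $\rho'$ stay below $1$'' is an infinite-time condition and cannot be carried through weak-$*$ limits as it stands. That claim is true, but proving it requires exactly the missing ingredient described next.

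The missing idea is to encode geometric times by a \emph{closed} set defined by a \emph{finite-time} condition. The paper uses
\[
\Omega^{L}:=\Big\{\hat y\in\hat M:\ \exists\,1\le l\le L\ \text{with}\ \sum_{m=0}^{l-1}\rho\big(\hat f^{m}(\hat y)\big)\ge l\Big\}.
\]
If $i,i+1\in E_n^L$, then $i+l$ is a geometric time for some $1\le l\le L$, so $\hat f^{i}(\hat x_{n,p})\in\Omega^L$. This can fail only at the at most $n/L$ right endpoints of maximal intervals of $F_n$, so $\delta_{F_n}(\hat x_{n,p})(\Omega^L)\ge\delta_{F_n}(\hat x_{n,p})(\hat M)-\frac1L$, and since $\Omega^L$ is closed this passes to $\hat\mu^L_p$. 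Because $E_n^L\subset E_n^{L+1}$, the limits $\hat\mu^L_p$ increase with $L$. Hence $\hat\mu^L_p(\Omega^{L_0})\ge\alpha^{L_0}-\frac1{L_0}$ for every $L\ge L_0$, where $\alpha^{L_0}$ is the total mass of $\hat\mu^{L_0}_p$. Closedness again gives $\hat\mu(\Omega^{L_0})\ge\alpha^{L_0}-\frac1{L_0}$, and letting $L_0\to\infty$ yields $\hat\mu(\bigcup_L\Omega^L)=\hat\mu(\hat M)$. By invariance, $\hat\mu$-a.e.\ $\hat y$ has its whole forward orbit in $\bigcup_L\Omega^L$. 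Its orbit therefore splits into consecutive blocks on each of which $\rho$ averages at least $1$, which gives $\limsup_n\frac1n\sum_{i<n}\rho(\hat f^i\hat y)\ge 1$. Birkhoff's theorem then turns this into the limit required in (\ref{Birkhoff average positive for hat mu}). Separately, the inequality $\lambda\ge 0$ in your last line needs a word of justification; it follows from the pointwise bound $\rho'\le(1-\frac1r)\log\|Df_x\|$.
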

\begin{proof}
Given $L\in\mathbb{N}$, we decompose $[m_{n},n)\cap E_{n}^{L}$ into maximal disjoint intervals: $$[m_{n},n)\cap E_{n}^{L}=\bigcup_{i\geq 1}[a_{i}, b_{i}).$$ Note that there are relatively few (at most $n/L$) such intervals $[a_{i}, b_{i})$, as the complement of $E^{L}_{n}$ in $[0, n)$ consists of a disjoint union of intervals with lengths exceeding $L$, except possibly the first and the last intervals. Consequently, given any continuous function $h:\hat{M}\to\mathbb{R}$ and any $n,p\in\mathbb{N}$, $$\big|\delta_{[m_{n},n)\cap E_{n}^{L}}\left(\hat{x}_{n,p}\right)\left(h\right)-\delta_{[m_{n},n)\cap E_{n}^{L}}\left(\hat{x}_{n,p}\right)\left(h\circ\hat{f}\right)\big|\leq\frac{1}{n}\sum_{i}\big|h\left(\hat{f}^{a_{i}}\left(\hat{x}\right)\right)\big|+\big|h\left(\hat{f}^{b_{i}}\left(\hat{x}\right)\right)\big|\leq \frac{2}{L}\cdot\max_{\hat{y}}|h(\hat{y})|.$$ Passing to the limit in $n$, we have $$\big|\hat{\mu}^{L}_{p}\left(h\right)-\hat{\mu}^{L}_{p}\left(h\circ\hat{f}\right)\big|\leq \frac{2}{L}\cdot\max_{\hat{y}}|h(\hat{y})|.$$ Then passing to the limit in $L$ gives the $\hat{f}$-invariance of $\hat{\mu}_{p}$ and the $f$-invariant of $\mu_{p}$. As a consequence, $\hat{\mu}$ and $\mu$ are also invariant.

Next we show that for $\mu$-almost every $x$, $\lambda^{+}(f,x)>0$. We first note the following two facts:
\begin{itemize}
	\item Since $E_{n}^{L}\subset E_{n}^{L+1}$, the above convergence w.r.t. $L$ (i.e., $\hat{\mu}^{L}_{p}\xrightarrow{\text{$\quad L \quad$}}\hat{\mu}_{p}$) is an increasing convergence where \emph{increasing} means for any subset $A$, $$\hat{\mu}^{L}_{p}(A)\leq \hat{\mu}^{L+1}_{p}(A).$$
	\item By the convergences of the measures, we have $$\frac{\#\left([m_{n},n)\cap E_{n}^{L}\right)}{n} \xrightarrow{\text{$\quad n \quad$}} \alpha^{L}\quad \text{ and } \quad \alpha^{L}\xrightarrow{\text{$\quad L \quad$}} \alpha$$ where $\alpha^{L},\alpha\in[0,1]$ are defined (independent of $p$) by $$\alpha^{L}:= \hat{\mu}^{L}_{p}(\hat{M})=\mu^{L}_{p}(M),\quad\alpha:=\hat{\mu}_{p}(\hat{M})=\hat{\mu}(\hat{M})=\mu_{p}(M)=\mu(M).$$ Since we assume that $\mu$ is not zero measure, we may assume $\alpha^{L}, \alpha>0$. Also note that $\alpha^{L}$ converges increasingly to $\alpha$.
\end{itemize}

Note that by definition, for any point $\hat{x}_{n,p}$ and any integer $i$ with $i, i+1\in E_{n}^{L}$ \footnote{This additional condition $i+1\in E_n^{L}$ is a purely technical adjustment. It compensates for a minor asymmetry in the definition of geometric times: although $n$ itself may be declared a geometric time, the defining inequality only involves sums up to time $n-1$ (see Definition~\ref{definition of geometric time}). Requiring $i+1\in E_n^{L}$ avoids boundary issues caused by this convention and does not affect any asymptotic estimates.}, there is $1\leq l\leq L$ such that $l+i$ is a geometric time of $\hat{x}$. We then have (recalling $\rho(x, [v]):=\log\left\|Df_{x}\left(v\right)\right\|$) $$\hat{f}^{i}(\hat{x}_{n,p})\in \Omega^{L}:=\left\{\hat{x}\in\hat{M}:~ \exists 1\leq l\leq L \text{ such that } \sum_{m=0}^{l-1}\rho\left(\hat{f}^{m}\left(\hat{x}\right)\right)\geq l\right\}.$$ Note that in the definition of $\Omega^{L}$ above, we use the larger function $\rho$ rather than $\rho'$ for simplicity (since $\rho\geq\rho'$). Note that the number of indices $i$ satisfying $i\in E_n^{L}$ and $i+1\notin E_n^{L}$ is at most $\frac{n}{L}$, since each such $i$ corresponds to the right endpoint of a maximal geometric interval and the complement of $E_n^{L}$ consists of intervals of length at least $L$. As a consequence, for any $n,p\in\mathbb{N}$, we have $$\big|\delta_{[m_{n},n)\cap E_{n}^{L}}\left(\hat{x}_{n,p}\right)\left(\Omega^{L}\right)-\delta_{[m_{n},n)\cap E_{n}^{L}}\left(\hat{x}_{n,p}\right)\left(\hat{M}\right)\big |\leq \frac{1}{L}.$$ By the compactness of $\Omega^{L}$, passing the limit in $n$, for any $p\in\mathbb{N}$,  $$\big|\hat{\mu}_{p}^{L}\left(\Omega^{L}\right)-\hat{\mu}^{L}_{p}\left(\hat{M}\right)\big|=\big|\hat{\mu}_{p}^{L}\left(\Omega^{L}\right)-\alpha^{L}\big|\leq \frac{1}{L}.$$ Given $\varepsilon>0$, choose $L_{0}\in\mathbb{N}$ large enough such that for any $p\in\mathbb{N}$, $$\hat{\mu}_{p}^{L_{0}}\left(\Omega^{L_{0}}\right)\geq \alpha-\varepsilon.$$ Recalling that $\hat{\mu}^{L}_{p}\xrightarrow{\text{$\quad L \quad$}}\hat{\mu}_{p}$ is an increasing convergence, for any $L\geq L_{0}$ and any $p\in\mathbb{N}$, we have $$\hat{\mu}_{p}^{L}\left(\Omega^{L_{0}}\right)\geq \hat{\mu}_{p}^{L_{0}}\left(\Omega^{L_{0}}\right)\geq \alpha-\varepsilon.$$ By the compactness of $\Omega^{L_{0}}$, passing the limit in $L$, then in $p$, we have $$\hat{\mu}\left(\Omega^{L_{0}}\right)\geq \alpha-\varepsilon.$$ By the arbitrariness of $\varepsilon$, we have  $$\hat{\mu}\left(\bigcup_{L}\Omega^{L}\right)=\hat{\mu}\left(\hat{M}\right)=\alpha.$$ Therefore the $\hat{f}$-invariant set $\bigcap_{i\in\mathbb{Z}}\hat{f}^{i}\left(\bigcup_{L}\Omega^{L}\right)$ has $\hat{\mu}$ full measure in which for any $\hat{x}$, by definition, there is an increasing sequence $0=m_{0}<m_{1}<m_{2}<\cdots$ such that $$\sum_{m=m_{i}}^{m_{i+1}-1}\rho\left(\hat{f}^{m}\left(\hat{x}\right)\right)\geq m_{i+1}-m_{i}.$$ Hence for $\hat{\mu}$-almost every $\hat{x}$, $$\lim_{n\to+\infty}\frac{1}{n}\sum_{i=0}^{n-1}\rho\left(\hat{f}\left(\hat{x}\right)\right)=\limsup_{n\to+\infty}\frac{1}{n}\sum_{i=0}^{n-1}\rho\left(\hat{f}\left(\hat{x}\right)\right)\geq 1>0.$$ This implies for the projection $\mu$-almost every $x$, $\lambda^{+}(f,x)>0$ and $\hat{\mu}$ is an unstable lift of $\mu$ (see Section \ref{tangent dynamics}).
\end{proof}

The following proposition provides an upper bound for the volume growth of a sequence of curves, which can be viewed as a prototype of Theorem~\ref{main theorem of decomposition of volume growth of submanifolds}

Recall the small constant $\varepsilon_{f}$ in Proposition \ref{Burguet-Yomdin reparametrization}.

\begin{Proposition}\label{volume growth for time n}
	Let $f$ be a $C^r$ ($r>1$) diffeomorphism on a compact surface $M$ and let $\sigma_{n}$ be a sequence of $\varepsilon_{f}$-bounded $C^r$ curves. We have $$\limsup_{n \to +\infty}\frac{1}{n}\log{\rm Vol}\left(f^{n}\left(\sigma_{n}\right)\right)\leq \max \left\{h_{\rm top}(f),\,\, \frac{\log\left\|Df\right\|}{r}\right\}+4\log\left(\log\left\|Df\right\|+\log\left\|Df^{-1}\right\|+2\right)+C_{r}$$ where  $C_{r}$ is a universal constant depending only on $r$.
\end{Proposition}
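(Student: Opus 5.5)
The plan is to cover $\sigma_n$ by the pieces $O^{p,m,L}_{n,E,\beta}$, bound the number of pieces, apply Proposition~\ref{grow rate for major component in finite step n} to each piece, and then show by a limiting-measure argument that the leftover geometric term $\sum_{k\in[m,n)\cap E}\frac{\beta'_k-\beta''_k}{r-1}$ is negligible after normalization.

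\textbf{Covering and counting.} Fix $p$ and $L$. Every $x\in\sigma_n$ belongs to some $O^{p,m,L}_{n,E,\beta}$. Take $m=0$ if $x$ avoids all the balls $B(z_s,r_{z_s})$, $s\le n$. Otherwise take $m$ to be the length of the maximal trapping run of $x$ in the first such ball $B(z_s,m,r_{z_s})$; this uses that the balls are pairwise disjoint. The admissible data are counted as follows.
\begin{itemize}
\item There are at most $n+1$ choices of $m$.
\item The number of choices of $E=(E(\hat x)\cap[0,n))^L$ is $e^{n\eta(L)}$ with $\eta(L)\to0$. The reason is that $E$ is determined by at most $2n/L$ interval endpoints, since the gaps of $E$ have length larger than $L$.
\item Each $\beta'_i,\beta''_i$ ranges over an integer interval of length at most $\log\|Df\|+\log\|Df^{-1}\|+2$. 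This also holds at scale $p$, because $\frac1p\lceil\log\|Df^p\|\rceil$ lies in that range. Hence there are at most $(\log\|Df\|+\log\|Df^{-1}\|+2)^{2n}$ choices of $\beta$.
\end{itemize}
The number of pieces therefore contributes at most $2\log(\log\|Df\|+\log\|Df^{-1}\|+2)+\eta(L)$ to $\frac1n\log$. The stated factor $4$ leaves room for the mismatches between scale-$p$ and scale-$1$ data and for the ceilings. Consequently
$$\tfrac1n\log{\rm Vol}(f^n\sigma_n)\le \tfrac1n\max_{m,E,\beta}\log{\rm Vol}\big(f^n O^{p,m,L}_{n,E,\beta}\big)+(\text{counting term}).$$

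\textbf{Applying Proposition~\ref{grow rate for major component in finite step n}.} For a maximizing piece, Proposition~\ref{grow rate for major component in finite step n} gives a bound of the form
$$\alpha_n h_{\rm top}(f)+(1-\alpha_n)\frac{\log\|Df\|}{r}+\frac1n\sum_{k\in[m_n,n)\cap E_n}\frac{\beta'_k-\beta''_k}{r-1}+\frac{C_{n,L}}n,$$
where $\alpha_n=\#([m_n,n)\cap E_n)/n$. The first two terms are at most $\max\{h_{\rm top}(f),\log\|Df\|/r\}$. The error satisfies $\limsup_L\limsup_n C_{n,L}/n=C_r$. It remains to show that the geometric term has $\limsup$ at most $0$ after $n\to\infty$, then $L\to\infty$, then $p\to\infty$.

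\textbf{The geometric term (main obstacle).} Pick a point $x_{n,p}$ in the maximizing piece and let $\hat x_{n,p}$ be its lift. On $[m_n,n)\cap E_n$ the values $\beta'_k,\beta''_k$ equal $\frac1p\log\|Df^p_{f^k x}\|$ and $\frac1p\log\|Df^p(\hat f^k\hat x)\|$ up to $O(1/p)$. The geometric term is therefore, up to small errors, $\frac1{r-1}\,\delta_{[m_n,n)\cap E_n}(\hat x_{n,p})(\psi_p)$, where
$$\psi_p(\hat y)=\frac1p\big(\log\|Df^p_y\|-\log\|Df^p\hat y\|\big).$$
Here $\psi_p$ is continuous on $\hat M$. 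Pass to subsequences so that the empirical measures converge: first to $\hat\mu^L_p$ in $n$, then to $\hat\mu_p$ in $L$, then to $\hat\mu$ in $p$. The discrepancy between $E_n$ and $E_n^L$ costs at most $O(1/L)$. Proposition~\ref{limit measures are hyperbolic} shows that $\hat\mu_p$ and $\hat\mu$ are invariant and that $\hat\mu$ is an unstable lift of its projection $\mu$.

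By invariance of $\hat\mu_p$, the term $\int\frac1p\log\|Df^p\hat y\|\,d\hat\mu_p$ equals $\int\rho\,d\hat\mu_p$, which tends to $\int\rho\,d\hat\mu=\lambda^+(f,\mu)$ because $\hat\mu$ is the unstable lift. For the other term, the sequence $\frac1p\int\log\|Df^p\|\,d\mu$ is subadditive. One then needs upper semicontinuity in the form $\limsup_p\frac1p\int\log\|Df^p\|\,d\mu_p\le\lambda^+(f,\mu)$. The plan is to obtain it by bounding $\frac1p\int\log\|Df^p\|\,d\mu_p$ by $\frac1q\int\log\|Df^q\|\,d\mu_p+O(q/p)$ for fixed $q$, using $f$-invariance of $\mu_p$, then letting $p\to\infty$ and afterwards $q\to\infty$. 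Hence $\limsup_p\int\psi_p\,d\hat\mu_p\le0$. Since $\hat x_{n,p}$ was arbitrary, the geometric term is $\le o(1)$. Combining this with the covering bound gives the proposition.

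The step I expect to be hardest is making this triple limit interchange rigorous, especially the passage from the defining maximal pieces to a convergent sequence of empirical measures. This includes the case where the total mass $\alpha$ is zero, in which the geometric term is trivially small.
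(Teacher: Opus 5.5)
Your covering, counting, application of Proposition~\ref{grow rate for major component in finite step n} and the subadditive semicontinuity step all match the paper. However, the decisive identity for the geometric term is unjustified. You write $\int\rho\,d\hat\mu=\lambda^{+}(f,\mu)$ ``because $\hat\mu$ is the unstable lift''. Proposition~\ref{limit measures are hyperbolic} only gives positive Birkhoff averages of $\rho$ for $\hat\mu$-a.e.\ point. The equivalence $\lambda(\hat f,\hat\mu)=\lambda^{+}(f,\mu)\Leftrightarrow\hat\mu=\hat\mu^{+}$ recalled in Section~\ref{tangent dynamics} additionally needs $\lambda^{-}(f,x)\le 0$ for $\mu$-a.e.\ $x$. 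Suppose $\mu$ has an ergodic component with exponents $\lambda_1>\lambda_2>0$; such a component is necessarily carried by the orbit of a periodic source. A lift along the weak direction then has positive averages but $\int\rho=\lambda_2$. Your bound on the geometric term then stays of order $\frac{\lambda_1-\lambda_2}{r-1}$ times the mass of that component instead of vanishing. This can actually happen: near a source with $\lambda_1<2\lambda_2$, arcs tangent to the weak direction keep their tangents close to $E^{2}$ during long stays near the source, and these stays can consist of geometric times.

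Ruling this out is the entire purpose of the trapping times; in your proposal they only label the covering and are never used. The missing argument runs as follows.
\begin{enumerate}
\item At some time $\le m_n$ the orbit of $x_{n,p}$ lies outside $\bigcup_i B(f^i(z),r_z)$. This follows from the disjointness of the balls and the third property in Definition~\ref{Definition of periodic sources}.
\item Take $r>0$ with $f^j(B(z,r))\subset B(f^j(z),r_z)$ for $0\le j<T_z$. By the backward invariance $f^{-T_z}\bigl(B(f^k(z),r_z)\bigr)\subset B(f^k(z),r_z)$, the orbit can never afterwards enter $B(z,r)$.
\item Hence the empirical measures along $[m_n,n)\cap E_n^L$ give $B(z,r)$ zero mass for large $n$. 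By lower semicontinuity on open sets, $\mu(B(z,r))=0$.
\item Ergodic measures with two positive exponents are carried by periodic sources. So $\mu$-a.e.\ point has a unique positive exponent, $\hat\mu$ is the unique unstable lift, and $\int\rho\,d\hat\mu=\lambda^{+}(f,\mu)$.
\end{enumerate}

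There is a second, more technical problem. You choose the maximizing piece among the $O^{p,m,L}_{n,E,\beta}$ for fixed $p$ and $L$. So your point is really $x_{n,p,L}$, and $m$ and $E$ depend on $p$. The unstable-lift part of Proposition~\ref{limit measures are hyperbolic} needs one point for all $L$ and data $m_n,E_n$ common to all $p$. Its proof uses that $\hat\mu^{L}_{p}$ increases with $L$ and that the masses $\alpha^{L}$ do not depend on $p$. The paper secures this by partitioning $\sigma_n^{m_n}$ with $L$-independent data:
\begin{itemize}
\item the exact set $E(\hat x)\cap[0,n)$, giving at most $2^n$ classes whose cost is absorbed into $C_r$;
\item the four-component family $\gamma$, recording both the scale-$p$ and scale-$1$ ceilings at every time. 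This is where the factor $4$ really comes from.
\end{itemize}
It then observes $\Lambda^{p,m_n}_{n,E_n,\gamma}\subset O^{p,m_n,L}_{n,E_n^L,\beta(\gamma)}$ for every $L$ simultaneously. Relatedly, your remark that passing between $E_n$ and $E_n^L$ costs $O(1/L)$ is false in general, since $E_n^L\setminus E_n$ can have positive density. You should work with $E_n^L$ throughout, as in Proposition~\ref{grow rate for major component in finite step n}.
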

\begin{Remark*}
	We prove Proposition~\ref{volume growth for time n} by reformulating it in a convex combination form. More precisely, it suffices to show that there exists $\alpha \in [0,1]$ such that
	$$\limsup_{n \to +\infty}\frac{1}{n}\log{\rm Vol}\left(f^{n}\left(\sigma_{n}\right)\right)\leq \alpha\cdot h_{\rm top}(f)+\left(1-\alpha\right)\cdot\frac{\log\left\|Df\right\|}{r}+4\log\left(\log\left\|Df\right\|+\log\left\|Df^{-1}\right\|+2\right)+C_{r}.$$ 
	The parameter $\alpha$ arises from the relative proportions of different dynamical time regimes that interpolate between topological entropy and the Yomdin term. One can see from the proof that $\alpha$ corresponds, for some point $x$, to the proportion of geometric times (see Definition~\ref{definition of geometric time}) after excluding the times spent near periodic sources (which are referred to as \emph{trapping times} in the proof of Proposition \ref{volume growth for time n}).
\end{Remark*}
\medskip
\medskip
\begin{proof}[Proof of Proposition \ref{volume growth for time n}]

	Recall that the periodic sources are denoted by $z_{1}, z_{2}, z_{3}, \cdots$ with the corresponding radii $r_{z_{i}}$ (See Definition \ref{Definition of periodic sources}).

	Given $n\in\mathbb{N}$ and a point $x\in\sigma_{n}$, we define the \emph{trapping time} of $x$ by $$t_{n}(x):=\max\left\{0< k\leq n:~ \exists 1\leq s\leq n, \text{ s.t. } x\in B\left(z_{s}, k,r_{z_{s}}\right)\right\}$$ and set $t_{n}(x):=0$ if for any $1\leq s\leq n$, $$x\notin B\left(z_{s}, r_{z_{s}}\right).$$ Given $0\leq m\leq n$, define $$\sigma^{m}_{n}:=\{x\in\sigma_{n}:~ t_{n}(x)=m\}$$ and choose any $m_{n}$ (may not be unique) such that $${\rm Vol}\left(f^{n}\left(\sigma^{m_{n}}_{n}\right)\right)=\max_{0\leq m\leq n}{\rm Vol}\left(f^{n}\left(\sigma^{m}_{n}\right)\right).$$

	We define two partitions:
	\begin{itemize}
		\item Given $n\in\mathbb{N}$ and $E_{n}\subset[0,n)$, define $$\Lambda_{E_{n}}:=\left\{x\in\sigma^{m_{n}}_{n}:~E(\hat{x})\cap [0,n)=E_{n}\right\}$$ where, as previously defined, $E(\hat{x})$ is the set of geometric times of $\hat{x}$. Let $\mathcal{E}_{n}$ be the partition of $\sigma^{m_{n}}_{n}$ with such elements $\Lambda_{E_{n}}$ for all possible $E_{n}$. Note that for the cardinality, we have $$\#\mathcal{E}_{n}\leq 2^{n}.$$ We fix an element $\Lambda_{E_{n}}\in\mathcal{E}_{n}$ that achieves the maximal volume growth among all possible $\widetilde{E}_{n}$, i.e., $${\rm Vol}\left(f^{n}\left(\Lambda_{E_{n}}\right)\right)=\max_{\widetilde{E}_{n}}{\rm Vol}\left(f^{n}\left(\Lambda_{\widetilde{E}_{n}}\right)\right).$$
		\item  Given $p\in\mathbb{N}$ and a family of integers
		$$\gamma:=\left\{\left(\gamma'_{i},\, \hat{\gamma}'_{i},\, \gamma''_{i},\, \hat{\gamma}''_{i}\right)\right\}_{i\in [0,n)},$$ define
		$$\begin{aligned}
			\Lambda^{p,m_{n}}_{n, E_{n},\gamma}:=\Big\{x\in\Lambda_{E_{n}}:~\,i\in [0,n),\,\,\,&  \frac{1}{p}\left\lceil\log\left\|Df^{p}_{f^{i}(x)}\right\|\right\rceil= \gamma'_{i}, \,\frac{1}{p}\left\lceil\log\left\|Df^{p}\left(\hat{f}^{i}\left(\hat{x}\right)\right)\right\|\right\rceil=\hat{\gamma}'_{i},\,\\ & \left\lceil\log\left\|Df_{f^{i}(x)}\right\|\right\rceil=\gamma''_{i}, \,\left\lceil\log\left\|Df\left(\hat{f}^{i}\left(\hat{x}\right)\right)\right\|\right\rceil=\hat{\gamma}''_{i}\, \Big \}.
		\end{aligned}$$
		Let $\mathcal{E}_{n, E_{n}}^{p,m_{n}}$ be the partition of $\Lambda_{E_{n}}$ with such elements  $\Lambda^{p,m_{n}}_{n, E_{n},\gamma}$ for all possible $\gamma$.  We note two facts:
		\begin{itemize}
			\item For the cardinality, we have $$\#\mathcal{E}_{n, E_{n}}^{p,m_{n}}\leq \left(\log\left\|Df\right\|+\log\left\|Df^{-1}\right\|+2\right)^{4n}.$$
			\item For any $L\in \mathbb{N}$, by definitions (see the definition of $O^{p,m_{n}, L}_{n, E^{L}_{n},\beta}$ in front of Proposition \ref{grow rate for major component in finite step n}), $$\Lambda^{p,m_{n}}_{n, E_{n},\gamma}\subset O^{p,m_{n}, L}_{n, E^{L}_{n},\beta}$$ where the corresponding $\beta=\beta(\gamma):=\left\{\left(\beta'_{i},\, \beta''_{i}\right)\right\}_{i\in [0,n)}$ is defined by
			\begin{equation}\label{gamma generates beta}
				\beta'_{i}:=\gamma'_{i},\,\,\beta''_{i}:=\hat{\gamma}'_{i}, \,\, i\in[m_{n},n)\cap E_{n}^{L} \,\,\text{ and }\,\, \beta'_{i}:=\gamma''_{i},\,\,\beta''_{i}:=\hat{\gamma}''_{i}, \,\, i\in[0,n)\setminus\left([m_{n},n)\cap E_{n}^{L}\right).
			\end{equation}
		\end{itemize}

	\end{itemize}

	For each $p$, we fix an element $\Lambda^{p,m_{n}}_{n, E_{n},\gamma}\in\mathcal{E}_{n, E_{n}}^{p,m_{n}}$ that achieves the maximal volume growth, i.e., $${\rm Vol}\left(f^{n}\left(\Lambda^{p,m_{n}}_{n, E_{n},\gamma}\right)\right)=\max_{\widetilde{\gamma}}\,{\rm Vol}\left(f^{n}\left(\Lambda^{p,m_{n}}_{n, E_{n},\widetilde{\gamma}}\right)\right).$$

	We fix any point $x_{n,p}\in \Lambda^{p,m_{n}}_{n, E_{n},\gamma}$. Up to considering sub-sequences in $n, L,p$, we assume
	\begin{equation}\label{definitions for measures}
		\delta_{[m_{n},n)\cap E_{n}^{L}}\left(\hat{x}_{n,p}\right)\xrightarrow{\text{$\quad n \quad$}}\hat{\mu}^{L}_{p},\quad\quad \hat{\mu}^{L}_{p}\xrightarrow{\text{$\quad L \quad$}}\hat{\mu}_{p},\quad\quad \hat{\mu}_{p}\xrightarrow{\text{$\quad p \quad$}}\hat{\mu}
	\end{equation} for some Borel measures (not necessarily probability measures) $\hat{\mu}^{L}_{p}, \hat{\mu}_{p}$ and $\hat{\mu}$ (denote by $\mu^{L}_{p}, \mu_{p}$ and $\mu$ the corresponding projections on $M$). By the convergences of these measures, we have $$\frac{\#\left([m_{n},n)\cap E_{n}^{L}\right)}{n} \xrightarrow{\text{$\quad n \quad$}} \alpha^{L}\quad \text{ and } \quad \alpha^{L}\xrightarrow{\text{$\quad L \quad$}} \alpha$$ where $\alpha^{L},\alpha\in[0,1]$ are defined (independent of $p$) by $$\alpha^{L}:= \hat{\mu}^{L}_{p}(\hat{M})=\mu^{L}_{p}(M),\quad\alpha:=\hat{\mu}_{p}(\hat{M})=\hat{\mu}(\hat{M})=\mu_{p}(M)=\mu(M).$$
	By Proposition \ref{grow rate for major component in finite step n}, for any $p,L\in\mathbb{N}$, recalling that $\gamma$ is determined by $\Lambda_{E_n}$ and $p$, and that $\beta$ is determined by $\gamma$ as (\ref{gamma generates beta}),
	\begin{equation}\label{f n sigma volume intermediate step}
		\begin{aligned}
		\log{\rm Vol}\left(f^{n}\left(\sigma_{n}\right)\right)
		&\leq \log \left(n+1\right)+ \log{\rm Vol}\left(f^{n}\left(\sigma^{m_{n}}_{n}\right)\right)\\
		&\leq \log \left(n+1\right)+ \log\#\mathcal{E}_{n}+ \log{\rm Vol}\left(f^{n}\left(\Lambda_{E_{n}}\right)\right)\\
		&\leq \log \left(n+1\right)+ \log\#\mathcal{E}_{n}+\log\#\mathcal{E}_{n,E_{n}}^{p,m_{n}}+\log{\rm Vol}\left(f^{n}\left(\Lambda^{p,m_{n}}_{n, E_{n},\gamma}\right)\right)\\
		&\leq \log \left(n+1\right)+ \log\#\mathcal{E}_{n}+\log\#\mathcal{E}_{n,E_{n}}^{p,m_{n}}+\log{\rm Vol}\left(f^{n}\left(O^{p,m_{n}, L}_{n, E^{L}_{n},\beta}\right)\right)\\
		&\leq \log \left(n+1\right) + n\log2+4n\log\left(\log\left\|Df\right\|+\log\left\|Df^{-1}\right\|+2\right)+\#\left([m_{n},n)\cap E^{L}_{n}\right)\cdot h_{\rm top}(f)\\
		&+\left(m_{n}+\#\left([m_{n},n)\setminus E^{L}_{n}\right)\right)\cdot\frac{\log\left\|Df\right\|}{r} + \sum_{k\in [m_{n},n)\cap E^{L}_{n}}\frac{\beta'_{k}-\beta''_{k}}{r-1}+ C_{n,L}
	\end{aligned}
	\end{equation} where the error term $C_{n,L}$ satisfies $$\limsup_{L \to +\infty}\limsup_{n \to +\infty}\frac{C_{n,L}}{n}=C_{r}$$ for some universal constant $C_{r}$ depending only on $r$.

	Note that by definition, recalling $\rho(x, [v]):=\log\left\|Df_{x}\left(v\right)\right\|$, $$\frac{1}{n}\sum_{k\in [m_{n},n)\cap E^{L}_{n}}\frac{\beta'_{k}-\beta''_{k}}{r-1}\leq \frac{1}{r-1}\int\frac{\log\left\|Df^{p}_{(\cdot)}\right\|-\sum_{i=0}^{p-1}\rho\circ\hat{f}^{i}}{p} d\,\left(\delta_{[m_{n},n)\cap E_{n}^{L}}\left(\hat{x}_{n,p}\right)\right)+10.$$ Here the term "$10$" is introduced to compensate for the average discrepancy between the ceiling operation and the actual value.

	Putting $\frac{1}{n}$ on both sides in (\ref{f n sigma volume intermediate step}), we have

\begin{equation}\label{frac 1 n f n sigma volume intermediate step}
		\frac{1}{n}\log{\rm Vol}\left(f^{n}\left(\sigma_{n}\right)\right)\leq \alpha_{n,L}\cdot h_{\rm top}(f)+\left(1-\alpha_{n,L}\right)\cdot\frac{\log\left\|Df\right\|}{r}+\gamma_{n,L,p}
\end{equation} where $$\alpha_{n,L}:=\frac{\#\left([m_{n},n)\cap E^{L}_{n}\right)}{n}$$ and
	$$\begin{aligned}
\gamma_{n,L,p}
	&:= \frac{\log \left(n+1\right)}{n} + \log2+4\log\left(\log\left\|Df\right\|+\log\left\|Df^{-1}\right\|+2\right)+\frac{C_{n,L}}{n}+10\\
	&\quad+\frac{1}{r-1}\int\frac{\log\left\|Df^{p}_{(\cdot)}\right\|-\sum_{i=0}^{p-1}\rho\circ\hat{f}^{i}}{p} d\,\left(\delta_{[m_{n},n)\cap E_{n}^{L}}\left(\hat{x}_{n,p}\right)\right).
	\end{aligned}$$

	We take $\limsup_{n \to +\infty}$, then take $\limsup_{L \to +\infty}$ on both sides in (\ref{frac 1 n f n sigma volume intermediate step}): $$\limsup_{n \to +\infty}\frac{1}{n}\log{\rm Vol}\left(f^{n}\left(\sigma_{n}\right)\right)\leq \alpha\cdot h_{\rm top}(f)+\left(1-\alpha\right)\cdot\frac{\log\left\|Df\right\|}{r}+\limsup_{L\to+\infty}\limsup_{n \to +\infty}\gamma_{n,L,p}$$ where $$\alpha:=\limsup_{L\to+\infty}\limsup_{n \to +\infty}\alpha_{n,L}.$$ Consequently, for any $p\in\mathbb{N}$, $$\limsup_{n \to +\infty}\frac{1}{n}\log{\rm Vol}\left(f^{n}\left(\sigma_{n}\right)\right)\leq \max \left\{h_{\rm top}(f),\,\, \frac{\log\left\|Df\right\|}{r}\right\}+\limsup_{L\to+\infty}\limsup_{n \to +\infty}\gamma_{n,L,p}.$$
	
	It remains to show the following claim.

	\begin{Claim} $\gamma_{n,L,p}$ satisfies $$\limsup_{p\to+\infty}\limsup_{L\to+\infty}\limsup_{n \to +\infty}\gamma_{n,L,p}\leq 4\log\left(\log\left\|Df\right\|+\log\left\|Df^{-1}\right\|+2\right)+C_{r}$$ for some universal constant $C_{r}$ \footnote{The constant $C_{r}$ was defined earlier. We reuse the same notation here for simplicity. This causes no ambiguity, since the present $C_{r}$ may differ from the previous one only by a constant depending only on $r$.} depending only on $r$.
	\end{Claim}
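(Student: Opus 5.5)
The plan is to treat the terms of $\gamma_{n,L,p}$ separately. Only the integral term needs real work.

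\textbf{Elementary terms.} The contributions $\frac{\log(n+1)}{n}$, $\log 2$, $10$ and $4\log\left(\log\left\|Df\right\|+\log\left\|Df^{-1}\right\|+2\right)$ are harmless. By Proposition~\ref{grow rate for major component in finite step n}, $\limsup_{L}\limsup_{n}\frac{C_{n,L}}{n}=C_r$, so after enlarging $C_r$ by $\log 2+10$ it suffices to prove
$$\limsup_{p\to+\infty}\limsup_{L\to+\infty}\limsup_{n\to+\infty}\int g_p \,d\,\delta_{[m_n,n)\cap E_n^L}(\hat x_{n,p})\le 0,\qquad g_p:=\frac{\log\|Df^p_{(\cdot)}\|-\sum_{i=0}^{p-1}\rho\circ\hat f^i}{p}.$$

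\textbf{Passing to the limit measures.} For fixed $p$, the function $g_p$ is continuous on $\hat M$. The convergences (\ref{definitions for measures}) therefore give the limit $\int g_p\,d\hat\mu_p^L$ as $n\to\infty$, and then $\int g_p\,d\hat\mu_p$ as $L\to\infty$. By Proposition~\ref{limit measures are hyperbolic}, $\hat\mu_p$ is $\hat f$-invariant, so
$$\int \frac1p\sum_{i<p}\rho\circ\hat f^i\,d\hat\mu_p=\int\rho\,d\hat\mu_p\xrightarrow{\ p\ }\int\rho\,d\hat\mu=\lambda(\hat f,\hat\mu),$$
where the convergence uses continuity of $\rho$. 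For the other half, fix $q$ and use subadditivity of $\log\|Df^k\|$ together with invariance of $\mu_p$. Writing $p=kq+s$ gives
$$\int\tfrac1p\log\|Df^p\|\,d\mu_p\le \int\tfrac1q\log\|Df^q\|\,d\mu_p+O\!\left(\tfrac{q}{p}\right)\log\|Df\|.$$
Since $\frac1q\log\|Df^q\|$ is continuous, letting $p\to\infty$ yields $\limsup_p\le\int\frac1q\log\|Df^q\|\,d\mu$. Letting $q\to\infty$ (monotone by subadditivity) gives $\lambda^+(f,\mu)=\int\lambda^+(f,x)\,d\mu$. Hence the triple limsup is at most $\lambda^+(f,\mu)-\lambda(\hat f,\hat\mu)$. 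If $\mu=0$ this is trivially $0$.

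\textbf{Main obstacle: showing $\lambda(\hat f,\hat\mu)=\lambda^+(f,\mu)$.} Proposition~\ref{limit measures are hyperbolic} says $\hat\mu$ is an unstable lift of $\mu$ and that $\lambda^+>0$ $\mu$-a.e. To invoke the characterization from \cite[Proposition 3.8]{BCS25} quoted in Section~\ref{tangent dynamics}, I also need $\lambda^-(f,x)\le0$ for $\mu$-a.e. $x$; then the unstable lift is unique and equals $\hat\mu^+$.

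Ergodic measures with two positive exponents are carried by periodic sources. So it suffices to show that $\mu$ gives zero mass to every periodic source orbit. This is where the trapping times enter.

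1. All points of $\sigma^{m_n}_n$ have trapping time exactly $m_n$, and the empirical measures only use times $i\ge m_n$.
2. Property 2 of Definition~\ref{Definition of periodic sources}, $f^{-T_z}(B(f^kz,r_z))\subset B(f^kz,r_z)$, implies the following. If $f^i(x)\in B(f^k z_s,r_{z_s})$ for some $i\ge m_n$, then the backward iterates $f^{i-jT}(x)$ stay in the corresponding balls, all the way back to the start of the orbit. This would force the trapping time to exceed $m_n$ (I will have to check the bookkeeping between radii and residues mod $T_z$ carefully here).
3. Hence for $n\ge s$, the orbit segment indexed by $[m_n,n)$ avoids the open balls around the orbit of $z_s$. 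Passing to the limits, $\hat\mu$ gives zero mass to these open neighborhoods, so $\mu(\{\text{orbit of } z_s\})=0$ for every $s$.

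With $\lambda^-\le0$ a.e. established, $\hat\mu=\hat\mu^+$ and $\lambda(\hat f,\hat\mu)=\lambda^+(f,\mu)$. The integral term therefore has nonpositive triple limsup, which proves the claim.
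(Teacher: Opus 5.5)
Your reduction is the same as the paper's. You drop the harmless terms and pass to $\int g_p\,d\hat{\mu}_p$. You bound $\frac1p\int\log\|Df^p\|\,d\mu_p$ by subadditivity, which is the paper's lemma on sub-additive potentials done inline. You then reduce to showing that $\mu$ gives no mass to periodic-source orbits, so that $\hat{\mu}=\hat{\mu}^{+}$.

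The gap is in your step 2, and it is not a matter of bookkeeping: the implication is false as soon as $T_z\ge 2$ (it only works for fixed points). Property 2 controls $f^{-T_z}$ only. So $f^{i}(x)\in B(f^{k}z,r_z)$ yields $f^{i-jT_z}(x)\in B(f^{k}z,r_z)$ only at times $\equiv i \pmod{T_z}$. It says nothing about the intermediate times, nor about $x$ itself at time $0$. But $t_n(x)$ is defined through dynamical balls, i.e.\ shadowing at every time starting from $0$.

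For instance, take a period-2 source whose linearizations along the orbit are $\mathrm{diag}(1/2,4)$ and $\mathrm{diag}(4,1/2)$. Then $f^{\pm 2}$ is conformal near the orbit, so properties 1--3 of the definition of periodic sources hold for round balls when $r_z$ is small. Yet $x=z+(0,0.4r_z)$ has $f(x)=f(z)+(0,1.6r_z)$ and $f^{2}(x)=z+(0,0.8r_z)$. So $t_n(x)=1$, while the orbit is back in $B(z,r_z)$ at time $2$. Hence the statement ``the segment $[m_n,n)$ avoids the balls $B(f^{k}z_s,r_{z_s})$'' is not just unproved but false in general.

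The missing idea, which is how the paper argues, is to shrink the neighbourhood and look at the exit point. Choose $r>0$ with $f^{j}(B(z,r))\subset B(f^{j}z,r_z)$ for $0\le j<T_z$.

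First, if $y\notin\bigcup_{j}B(f^{j}z,r_z)$, then $f^{k}(y)\notin B(z,r)$ for all $k\ge 0$. Indeed, write $k=mT_z-j$; if $f^{k}(y)\in B(z,r)$, then $f^{mT_z}(y)\in B(f^{j}z,r_z)$, hence $y\in B(f^{j}z,r_z)$ by property 2.

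Second, take $n$ large enough that the whole orbit of $z$ is among $z_1,\dots,z_n$, and suppose $x=x_{n,p}$ lies in some $B(f^{j}z,r_z)$. Its first exit time $e$ from the tube satisfies $e\le t_n(x)=m_n$, by maximality in the definition of $t_n$. (If it does not exit before time $n$, then $m_n=n$ and there is nothing to prove.) By property 3, $f^{e}(x)\in B(f^{j+e}z,a/100)\setminus B(f^{j+e}z,r_z)$, and this set misses every $B(f^{l}z,r_z)$. If $x$ lies in none of these balls, take $e=0$.

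So $f^{k}(x_{n,p})\notin B(z,r)$ for all $k\ge m_n$, and the empirical measures give zero mass to the open set $\pi^{-1}B(z,r)$. The portmanteau inequality for open sets carries this through the three limits, contradicting $\mu(\{z\})>0$. With this replacement the rest of your argument goes through.

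Two minor points. First, $\frac1q\int\log\|Df^q\|\,d\mu$ need not be monotone in $q$; use Fekete/Kingman, $\inf_q=\lim_q$. Second, $\log\|Df^q\|$ can be negative, so your $O(q/p)$ error term should also involve $\log\|Df^{-1}\|$.
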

	\begin{proof}[Proof of Claim]
		Let us first show the following two properties:
		\begin{itemize}
			\item $\hat{\mu}_{p}$ and $\hat{\mu}$ defined in (\ref{definitions for measures}) are invariant measures,
			\item $\hat{\mu}$ is the unique unstable lift of $\mu$, provided that $\mu$ is not zero measure.
		\end{itemize}
		To see this, by Proposition \ref{limit measures are hyperbolic},
		\begin{itemize}
		\item $\hat{\mu}_{p}, \hat{\mu}$ are invariant measures,
		\item $\mu$-almost every $x$ has at least one positive Lyapunov exponent (i.e., $\lambda^{+}(f,x)>0$),
		\item $\hat{\mu}$ is an unstable lift of $\mu$.
		\end{itemize} We claim that $\mu$-almost every $x$ has a unique positive Lyapunov exponent (hence $\hat{\mu}$ is the unique unstable lift of $\mu$), due to the choice of $m_{n}$.   Indeed, going by contradiction, if not, $\mu$ would have an ergodic component with two positive Lyapunov exponents and this ergodic component must support on the orbits of some periodic source $z$ with $\mu$ positive measure (see \cite[Proposition 4.4]{Pol93} or \cite[Fact in Claim 7.3]{BCS22}). 
		
		Let $r>0$ be small enough such that if $y\notin \cup_{i} B\left(f^{i}(z), r_{z}\right)$,  then $f^{k}(y)\notin B(z, r)$ for all $k\in\mathbb{N}$. Indeed, it is enough to choose $r>0$ small enough such that for $0\leq j \leq T_{z}-1$ ($T_{z}$ denotes the period of $z$), \begin{equation}\label{choice of r}
			f^{j}\left(B\left(z,r\right)\right)\subset B\left(f^{j}(z),r_{z}\right).
		\end{equation} To see this, we write $k=m\cdot T_{z}-j$ for some $0\leq j \leq T_{z}-1$. Since $y\notin B\left(f^{j}(z), r_{z}\right)$, by the choice of $r_{z}$ (see the second property in Definition \ref{Definition of periodic sources}), $$f^{j}\circ f^{k}(y)=f^{mT_{z}}(y)\notin B\left(f^{j}(z), r_{z}\right).$$ By (\ref{choice of r}), $$f^{k}(y)\notin B\left(z, r\right).$$

		Since $\mu$ is not zero measure, we may assume for all large $n$, $$m_{n}+100<n.$$

		Since $x_{n,p}\in\sigma_{n}^{m_{n}}$, by the definition of $m_{n}$, there must exist some $0\leq i\leq m_{n}$ such that  $$f^{i-1}(x_{n,p})\in B\left(f^{i-1}(z), r_{z}\right),\quad f^{i}(x_{n,p})\notin B\left(f^{i}(z), r_{z}\right)\footnote{There is also a degenerate case where $x_{n,p}\notin B\left(z, r_{z}\right)$ which corresponds to the case $m_{n}=0$.}.$$ By the choice of $r_{z}$ (see the third property in Definition \ref{Definition of periodic sources}), we then have $$f^{i}(x_{n,p})\notin \bigcup_{i}B\left(f^{i}(z), r_{z}\right).$$ As a consequence, for any $k\in [m_{n},n)\cap E_{n}^{L}$, $$f^{k}(x_{n,p})\notin B(z, r).$$ This implies for large $n$, $$\delta_{[m_{n},n)\cap E_{n}^{L}}\left(x_{n,p}\right)\left(B(z,r)\right)=0$$ which is a contradiction. Hence $\mu$-almost every $x$ must have a unique positive Lyapunov exponent and  $\hat{\mu}$ is the unique unstable lift of $\mu$.

		 By taking limit in $n$, then in $L$, we have

		\begin{equation*}
			\begin{aligned}
				&\quad\limsup_{L\to+\infty}\limsup_{n \to +\infty}\int\frac{\log\left\|Df^{p}_{(\cdot)}\right\|-\sum_{i=0}^{p-1}\rho\circ\hat{f}^{i}}{p} d\,\left(\delta_{[m_{n},n)\cap E_{n}^{L}}\left(\hat{x}_{n,p}\right)\right)\\
				&= \int\frac{\log\left\|Df^{p}_{(\cdot)}\right\|-\sum_{i=0}^{p-1}\rho\circ\hat{f}^{i}}{p} d\,\hat{\mu}_{p} .\\
				&=\int\frac{\log\left\|Df^{p}_{(\cdot)}\right\|}{p} d\,\mu_{p}- \int \rho d\,\hat{\mu}_{p}.
			\end{aligned}
		\end{equation*}
		Taking limit in $p$ on both sides, by Lemma \ref{upper continuity of subadditive functions}, we then have
			$$\begin{aligned}
				&\quad\limsup_{p\to+\infty}\limsup_{L\to+\infty}\limsup_{n \to +\infty}\int\frac{\log\left\|Df^{p}_{(\cdot)}\right\|-\sum_{i=0}^{p-1}\rho\circ\hat{f}^{i}}{p} d\,\left(\delta_{[m_{n},n)\cap E_{n}^{L}}\left(\hat{x}_{n,p}\right)\right)\\
			&\leq \int\limsup_{p\to+\infty}\frac{\log\left\|Df^{p}_{(\cdot)}\right\|}{p} d\,\mu- \int \rho d\,\hat{\mu}.
		\end{aligned}$$
		Since $\hat{\mu}$ is the unique unstable lift of $\mu$, we have (see Section \ref{tangent dynamics}) $$\int\limsup_{p\to+\infty}\frac{\log\left\|Df^{p}_{(\cdot)}\right\|}{p} d\,\mu- \int \rho d\,\hat{\mu}=0 \footnote{If $\mu$ is the zero measure (e.g., if $\frac{m_n}{n}\to 1$), then the equality holds trivially.}.$$ Hence $$\limsup_{p\to+\infty}\limsup_{L\to+\infty}\limsup_{n \to +\infty}\gamma_{n,L,p}\leq 4\log\left(\log\left\|Df\right\|+\log\left\|Df^{-1}\right\|+2\right)+\log 2+10+C_{r}.$$ We can put $\log 2+10$ into $C_{r}$ which completes the proof of the claim.

	\end{proof}
The proof of Proposition \ref{volume growth for time n} is then complete.
\end{proof}

Now we use Proposition \ref{volume growth for time n} to prove Theorem \ref{main theorem of decomposition of volume growth of submanifolds}.
\begin{proof}[Proof of Theorem \ref{main theorem of decomposition of volume growth of submanifolds}]

Write
$$\chi(f):=\max \left\{h_{\rm top}(f),\,\, \frac{\log\left\|Df\right\|}{r}\right\}+4\log\left(\log\left\|Df\right\|+\log\left\|Df^{-1}\right\|+2\right)+C_r,\qquad\alpha\in[0,1]$$ where $C_{r}$ is the universal constant depending only $r$ from Proposition \ref{volume growth for time n}.

		In the following arguments, the notation $\sup_{\sigma}$ refers to the supremum taken over all
$C^{r}$ embedded curves $\sigma:[0,1]\to M$ with
$\left\|\sigma\right\|_{C^{r}}\le 1$ and $\left\|d_{t}\sigma\right\| \geq 1/2, \,t\in[0,1]$.

We claim \begin{equation}\label{uniform volume growth}
	\limsup_{n\to+\infty}\frac{1}{n}\log\sup_{\sigma}{\rm Vol}\left(f^n(\sigma)\right)\leq \chi(f).
\end{equation} Assume by contradiction that
$$\limsup_{n\to+\infty}\frac{1}{n}\log\sup_{\sigma}{\rm Vol}\left(f^n(\sigma)\right)>\chi(f).$$ Then there exist $n_k\to+\infty$ and $C^r$ embedded curves
$\sigma_{n_k}$ with
$\left\|\sigma\right\|_{C^{r}}\le 1$ and $\left\|d_{t}\sigma\right\| \geq 1/2, \,t\in[0,1]$ such that $$\limsup_{k\to+\infty}\frac{1}{n_k}\log{\rm Vol}\left(f^{n_k}(\sigma_{n_k})\right)>\chi(f).$$
Since each $\sigma_{n_k}$ can be decomposed into a uniformly bounded number of $\varepsilon_f$-bounded curves (see Lemma~\ref{Decomposition into varepsilon bounded curves}), we may, without loss of generality, assume that the sequence $\{\sigma_{n_k}\}$ itself consists of $\varepsilon_f$-bounded curves.
 Hence, by Proposition \ref{volume growth for time n},
$$\limsup_{k\to+\infty}\frac{1}{n_k}\log{\rm Vol}\left(f^{n_k}(\sigma_{n_k})\right)\leq \chi(f),$$
which is a contradiction.

\begin{Claim}
For any $q\geq 1$, $$\limsup_{n\to+\infty}\frac{1}{n}\log\sup_{\sigma}{\rm Vol}\left(f^n(\sigma)\right)=\limsup_{n\to+\infty}\frac{1}{qn}\log\sup_{\sigma}{\rm Vol}\left(f^{qn}(\sigma)\right).$$
\end{Claim}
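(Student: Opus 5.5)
The plan is to write $V_n:=\sup_{\sigma}{\rm Vol}(f^n(\sigma))$ and prove the two inequalities separately; note that $V_n$ is taken for $f$ itself, with the same admissible family of curves. The inequality ``$\geq$'' is immediate: $\{qn\}_{n\ge1}$ is a subsequence of $\mathbb{N}$, so
$$\limsup_{n\to+\infty}\frac{1}{qn}\log V_{qn}\le \limsup_{N\to+\infty}\frac{1}{N}\log V_N .$$

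For ``$\leq$'', given a large $N$, write $N=qn+j$ with $0\le j<q$. Every admissible $\sigma$ satisfies $f^{N}(\sigma)=f^{j}\bigl(f^{qn}(\sigma)\bigr)$, and $f^j$ multiplies arc length by at most $\|Df\|^{j}\le \|Df\|^{q}$. Hence
$${\rm Vol}\bigl(f^N(\sigma)\bigr)\le \|Df\|^{q}\,{\rm Vol}\bigl(f^{qn}(\sigma)\bigr),$$
and taking the supremum over $\sigma$ gives $V_N\le \|Df\|^{q}\,V_{qn}$. This step is essentially trivial; the only point to check is that the bound $\|Df\|^{j}$ is a constant depending only on $q$.

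Next, take logarithms and divide by $N$. Since $N/(qn)\to1$ and $\log V_{qn}\ge \log c>0$ for a uniform lower bound $c$ (curves with $\|d_t\sigma\|\ge 1/2$ have length at least $1/2$, and $f^{-qn}$ is Lipschitz, so lengths stay bounded below by an exponential), we have
$$\frac{1}{N}\log V_N\le \frac{q\log\|Df\|}{N}+\frac{qn}{N}\cdot\frac{1}{qn}\log V_{qn}.$$
Letting $N\to\infty$ gives $\limsup_N\frac1N\log V_N\le \limsup_n\frac1{qn}\log V_{qn}$. The sign issue when multiplying by $qn/N\to1$ needs a little care, but both sides are finite, because $V_n\le C\|Df\|^n$ and $V_n\ge \frac12\|Df^{-1}\|^{-n}$, so the ratio argument is valid.

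The main obstacle, such as it is, lies not in the claim itself but in how it will be used afterwards. Presumably one applies inequality (\ref{uniform volume growth}) to $f^q$ in place of $f$. For that, the admissible curve family must be the same for $f$ and $f^q$, which it is, since it depends only on $\sigma$. Then the claim, combined with $\lambda^+(f^q)=q\lambda^+(f)$, $h_{\rm top}(f^q)=qh_{\rm top}(f)$, and $\frac1q\log\|Df^q\|\to\lambda^+(f)$, absorbs the constants $4\log(\cdots)+C_r$ after division by $q$. For the claim itself, nothing beyond the elementary comparison above is needed.
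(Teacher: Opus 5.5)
Your proposal is correct and matches the paper's proof: ``$\geq$'' follows because $\{qn\}$ is a subsequence, and ``$\leq$'' follows from $V_N\le \|Df\|^{q}\,V_{qn}$ for $N=qn+j$ with $0\le j<q$, after dividing by $N$. You also note that $\frac{1}{n}\log V_n$ is bounded above and below, which justifies replacing $\frac{qn}{N}$ by $1$ in the limit; the paper leaves this implicit, and the bound $\log V_n\ge -n\log\|Df^{-1}\|-\log 2$ you give later is what is needed, not $\log c>0$.
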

\begin{proof}[Proof of Claim]
	Set
	$$ a_n :=\sup_{\sigma}{\rm Vol}\left(f^n(\sigma)\right).$$
	The inequality "$\geq$" follows since $\{qn\}$ is a sub-sequence of $\mathbb{N}$.
	For the reverse inequality, write $n=qk+m$ with $0\le m<q$. Since $f^m$ has
	uniformly bounded $C^1$ norm for $0\le m<q$, there exists $C>0$ such that
	$$a_n \le C\, a_{qk}.$$
	Hence
	$$\frac{1}{n}\log a_n\le\frac{qk}{n}\cdot\frac{1}{qk}\log a_{qk}+\frac{1}{n}\log C,$$
	and letting $n\to+\infty$ yields the result.
\end{proof}

Replacing $f$ by $f^{q}$ and putting $\frac{1}{q}$ on both sides in (\ref{uniform volume growth}), by the arbitrariness of $q$, we get $$\limsup_{n\to+\infty}\frac{1}{n}\log\sup_{\sigma}{\rm Vol}\left(f^n(\sigma)\right)\leq \max \left\{h_{\rm top}(f),\,\, \frac{\lambda^{+}(f)}{r}\right\}.$$
The proof of Theorem \ref{main theorem of decomposition of volume growth of submanifolds} is then complete.
\end{proof}

	\section{Volume growth of the tangent cocycle}\label{Volume growth of the tangent cocycle}
	In this section, we study the relationship between the volume growth of the tangent cocycle and entropy. More precisely, we establish the chain of inequalities:
	$$h_{\rm top}(f)\leq\liminf_{n\to+\infty}\frac{1}{n}\log\int_{M}\left\|Df^{n}_{x}\right\|\,dx\leq  \limsup_{n\to+\infty}\frac{1}{n}\log\int_{M}\left\|Df^{n}_{x}\right\|\,dx\leq \max \left\{h_{\rm top}(f),\,\, \frac{\lambda^{+}(f)}{r}\right\}.$$
	The rightmost inequality is Theorem~\ref{main theorem of decomposition of volume growth of derivatives}, while the leftmost inequality will be proved in Theorem~\ref{volume of tangent cocycles bounds entropy}.

\subsection{Upper bound of the volume growth rate}\label{Upper bound of the volume growth rate}
The following lemma asserts the existence of a point $z$ at which the exponential growth rate of $\int_{M}\left\|Df^{n}_{x}\right\|$ can be detected on arbitrarily small neighborhoods of $z$.
\begin{Lemma}\label{existence of a maximal point}
	Let $f$ be a $C^{1}$ diffeomorphism on a compact manifold $M$. There is a point $z$ such that for any neighborhood $U$ of $z$, $$\limsup_{n\to+\infty}\frac{1}{n}\log\int_{U}\left\|Df^{n}_{x}\right\|\,dx=\limsup_{n\to+\infty}\frac{1}{n}\log\int_{M}\left\|Df^{n}_{x}\right\|\,dx.$$
\end{Lemma}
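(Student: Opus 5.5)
We argue by a compactness and finite-covering (bisection) argument on $M$. Write $\Lambda:=\limsup_{n\to+\infty}\frac{1}{n}\log\int_{M}\|Df^{n}_{x}\|\,dx$, and for a Borel set $A$ write $\Lambda(A):=\limsup_{n\to+\infty}\frac{1}{n}\log\int_{A}\|Df^{n}_{x}\|\,dx$. Note that $\Lambda$ is finite: $\|Df^{n}_{x}\|\le\|Df\|^{n}$ gives $\Lambda\le\log\|Df\|$, and $\|Df^{n}_{x}\|\ge\|Df^{-1}\|^{-n}$ gives $\Lambda\ge-\log\|Df^{-1}\|$. Clearly $\Lambda(A)\le\Lambda(B)$ whenever $A\subset B$, so $\Lambda(U)\le\Lambda$ holds for every $U$, and only the reverse inequality needs work.

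The key elementary fact is finite subadditivity in the max sense: for finitely many sets $A_{1},\dots,A_{k}$ we have $\Lambda(A_{1}\cup\cdots\cup A_{k})=\max_{j}\Lambda(A_{j})$. Indeed,
$$\int_{\cup A_{j}}\|Df^{n}_{x}\|\,dx\le k\max_{j}\int_{A_{j}}\|Df^{n}_{x}\|\,dx.$$
Taking $\frac1n\log$ kills the factor $k$, and the limsup of a maximum of finitely many sequences equals the maximum of their limsups. (The inequality $\ge$ is just monotonicity.)

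Now suppose, for contradiction, that no such $z$ exists. Then every point $y\in M$ has an open neighborhood $U_{y}$ with $\Lambda(U_{y})<\Lambda$. By compactness, finitely many $U_{y_{1}},\dots,U_{y_{k}}$ cover $M$. The max-subadditivity then gives
$$\Lambda=\Lambda(M)\le\max_{j}\Lambda(U_{y_{j}})<\Lambda,$$
which is a contradiction. Hence there is a point $z$ such that every neighborhood $U$ of $z$ satisfies $\Lambda(U)\ge\Lambda$. Combined with monotonicity, this gives equality.

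There is no real obstacle. The only points to check are that $\Lambda$ is finite, so that the strict inequalities make sense, and that the constant factor $k$ disappears after applying $\frac1n\log$. Alternatively, one can write the argument as a nested sequence of closed sets of shrinking diameter, at each stage keeping a piece with full growth rate, and take $z$ as the intersection point. The covering argument above is shorter.
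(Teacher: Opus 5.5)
Your proposal is correct and follows the paper's proof essentially verbatim: you assume every point has a neighborhood with strictly smaller growth rate, extract a finite subcover by compactness, and use that $\limsup_{n}\frac{1}{n}\log$ of a finite sum equals the maximum of the individual rates to reach a contradiction. Your added check that the rate is finite, lying between $-\log\|Df^{-1}\|$ and $\log\|Df\|$, is a harmless extra detail that the paper leaves implicit.
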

\begin{proof}
	Write $$\lambda(U) := \limsup_{n\to+\infty} \frac{1}{n} \log \int_{U} \left\|Df^n_x\right\| \, dx.$$ Suppose for contradiction that for every $z \in M$, there exists an open neighborhood $U_z$ such that $\lambda(U_z) < \lambda(M)$.

	By the compactness of $M$, there exists a finite sub-cover $\{U_{z_i}\}_{i=1}^k$ such that $M = \bigcup_{i=1}^k U_{z_i}$. Using the property that for non-negative sequences $a_{n,i}$,
	$$ \limsup_{n\to+\infty} \frac{1}{n} \log \left( \sum_{i=1}^k a_{n,i} \right) = \max_{1 \le i \le k} \left( \limsup_{n\to+\infty} \frac{1}{n} \log a_{n,i} \right), $$
	we have:
	$$ \lambda(M) = \lambda\left( \bigcup_{i=1}^k U_{z_i} \right) = \max_{1 \le i \le k} \lambda(U_{z_i})<\lambda(M)$$ which is a contradiction.
\end{proof}

The main idea to prove Theorem~\ref{main theorem of decomposition of volume growth of derivatives} is to reduce the estimation of the volume growth of the tangent cocycle to that of one-dimensional submanifolds via Fubini’s theorem, which allows the two-dimensional integral of $\left\|Df^{n}_{x}\right\|$ to be decomposed into integrals along horizontal and vertical lines. The conclusion then follows by applying the curve volume growth estimate in Theorem \ref{main theorem of decomposition of volume growth of submanifolds}.
\begin{proof}[Proof of Theorem \ref{main theorem of decomposition of volume growth of derivatives}]

Let $\mathcal{A}:=\left\{\left(U_{i},\varphi_{i}\right):~\varphi_{i}:U_{i}\to[0,1]^{2}\right\}_{0\leq i<N}$ be a finite family of charts that cover $M$. Let $z$ be the point in Lemma \ref{existence of a maximal point}. Without loss of generality, we assume $z\in U_{0}$  and $U_{0}$ is a ball at $z$ with sufficiently small diameter such that for any line $\gamma$ in $[0,1]^{2}$, $\varphi^{-1}_{0}\left(\gamma\right)$ is contained in some $C^{r}$ embedded curve $\sigma: [0,1]\to M$ with $\left\|\sigma\right\|_{C^{r}}\leq 1$ and $\left\|d_{t}\sigma\right\| \geq 1/2, \,t\in[0,1]$. Let $K$ be a sufficiently large number (independent of $n$) such that

\begin{itemize}
	\item for any $0\leq i< N$ and any curve $\sigma$ on $U_{i}$, $$K^{-1}\cdot {\rm Vol}\left(\varphi_{i}\circ\sigma\right)\leq {\rm Vol}\left(\sigma\right)\leq K\cdot {\rm Vol}\left(\varphi_{i}\circ\sigma\right).$$
	
	\item  for any $0\leq i< N$ and any $n\geq 1$,
	\begin{equation}\label{volume growth in local coordinates}
		\int_{U_{0}^{i}}\left\|Df^{n}_{x}\right\|\,dx\leq K \int_{\varphi_{0}\left( U_{0}^{i}\right)}\left\|DF^{n}_{\left(x_{1},x_{2}\right)}\right\|\,dx_{1}dx_{2}
	\end{equation} where $\left(x_{1}, x_{2}\right)$ denotes the standard coordinates in $\mathbb{R}^{2}$ and $$F^{n}:=\varphi_{i}\circ f^{n}\circ\varphi^{-1}_{0},\quad U_{0}^{i}:=\left\{x\in U_{0}:~ f^{n}\left(x\right)\in U_{i}\right\}.$$
	
\end{itemize}

Given $n\in\mathbb{N}$, we fix any $0\leq i<N$ such that
\begin{equation}\label{major U0 i}
\int_{U_{0}}\left\|Df^{n}_{x}\right\|\,d \leq N\cdot \int_{U_{0}^{i}}\left\|Df^{n}_{x}\right\|\,dx.
\end{equation} 

Given a $C^{1}$ map $G:=(G_{1}, G_{2})$ on $\mathbb{R}^{2}$, denote partial derivative vectors of $G$ by $$\partial_{1}G_{(x_{1},x_{2})}:=\left(\frac{\partial G_{1}}{\partial x_{1}}\left(x_{1},x_{2}\right), \frac{\partial G_{2}}{\partial x_{1}}\left(x_{1},x_{2}\right)\right),\quad \partial_{2}G_{(x_{1},x_{2})}:=\left(\frac{\partial G_{1}}{\partial x_{2}}\left(x_{1},x_{2}\right), \frac{\partial G_{2}}{\partial x_{2}}\left(x_{1},x_{2}\right)\right).$$  Note that, by standard linear algebra, $$\left\|DG_{\left(x_{1},x_{2}\right)}\right\|\leq\left\|\partial_{1}G_{(x_{1},x_{2})}\right\|+\left\|\partial_{2}G_{(x_{1},x_{2})}\right\|$$ where $\left\|\cdot\right\|$ denotes the usual operator norm for matrices and the usual Euclidean norm for vectors.  Consider the foliation of horizontal lines
$$
\left\{[0,1]\times x_{2}:\ x_{2}\in[0,1]\right\}.
$$

By Fubini's theorem and by the choice of $K$, we have

$$\begin{aligned}
	\int_{\varphi_{0}\left(U_{0}^{i}\right)}
	\left\|\partial_{1} F^{n}_{\left(x_{1},x_{2}\right)}\right\|\,dx_{1}dx_{2}
	&=\int_{0}^{1}
	\left(
	\int_{\varphi_{0}\left(U_{0}^{i}\right)\cap([0,1]\times x_{2})}
	\left\|\partial_{1} F^{n}_{\left(x_{1},x_{2}\right)}\right\|\,dx_{1}
	\right)dx_{2}\\
	&=\footnotemark\int_{0}^{1}
	{\rm Vol}\left(F^{n}\left(\varphi_{0}\left(U_{0}^{i}\right)\cap\left([0,1]\times x_{2}\right)\right)\right)dx_{2}\\
	&\leq \max_{0\le x_{2}\leq 1}{\rm Vol}\left(F^{n}\left(\varphi_{0}\left(U_{0}^{i}\right)\cap\left([0,1]\times x_{2}\right)\right)\right)\\
	&\leq K\cdot \max_{0\le x_{2}\leq 1}{\rm Vol}\left(f^{n}\left(U_{0}^{i}\cap\varphi_{0}^{-1}\left([0,1]\times x_{2}\right)\right)\right)\\
	&\leq K\cdot \max_{0\le x_{2}\leq 1}{\rm Vol}\left(f^{n}\left(\varphi_{0}^{-1}\left([0,1]\times x_{2}\right)\right)\right).
\end{aligned}$$\footnotetext{Consider a $C^{1}$ diffeomorphism $G:=(G_{1}, G_{2})$ on $\mathbb{R}^{2}$. Fix $x_{2}\in[0,1]$ and consider the $C^{1}$ curve
$\gamma_{x_{2}}(x_{1}):=(x_{1},x_{2})$.
Then $G\circ\gamma_{x_{2}}(x_{1})=G(x_{1},x_{2})$ and
$$
d_{x_{1}}\big(G\circ\gamma_{x_{2}}\big)
=
\left(\frac{\partial G_{1}}{\partial x_{1}}(x_{1},x_{2}),
\frac{\partial G_{2}}{\partial x_{1}}(x_{1},x_{2})\right)
=:
\partial_{1}G_{(x_{1},x_{2})}.
$$
By the length formula for $C^{1}$ curves, the length of
$G(\gamma_{x_{2}})$ is therefore given by
$$
{\rm Vol}\big(G(\gamma_{x_{2}})\big)
=
\int \left\|\partial_{1}G_{(x_{1},x_{2})}\right\|\,dx_{1}.
$$}
Similarly, by considering vertical lines, $$\int_{\varphi_{0}\left(U_{0}^{i}\right)}
\left\|\partial_{2} F^{n}_{\left(x_{1},x_{2}\right)}\right\|\,dx_{1}dx_{2}\leq K\cdot \max_{0\le x_{1}\leq 1}{\rm Vol}\left(f^{n}\left(\varphi_{0}^{-1}\left(x_{1}\times[0,1] \right)\right)\right).$$ We define $\sigma_n$ to be the pre-image under $\varphi_0$ of a horizontal or vertical line in $[0,1]^2$ whose image under $f^n$ attains the maximal volume among all such coordinate lines above, namely, $${\rm Vol}\left(f^{n}\left(\sigma_{n}\right)\right)=\max_{0\leq x_{1},x_{2}\leq 1}\left\{{\rm Vol}\left(f^{n}\left(\varphi_{0}^{-1}\left([0,1]\times x_{2}\right)\right)\right),\,{\rm Vol}\left(f^{n}\left(\varphi_{0}^{-1}\left(x_{1}\times[0,1] \right)\right)\right)\right\}.$$
Hence
$$\begin{aligned}
	\int_{\varphi_{0}\left(U_{0}^{i}\right)}
	\left\| DF^{n}_{(x_{1},x_{2})} \right\| \, dx_{1} dx_{2}
	&\leq
	\int_{\varphi_{0}\left(U_{0}^{i}\right)}
	\left(
	\left\| \partial_{1} F^{n}_{(x_{1},x_{2})} \right\|
	+
	\left\| \partial_{2} F^{n}_{(x_{1},x_{2})} \right\|
	\right)
	\, dx_{1} dx_{2}\\
	&\leq
	2K\cdot{\rm Vol}\left(f^{n}\left(\sigma_{n}\right)\right).
\end{aligned}$$

	Applying Theorem \ref{main theorem of decomposition of volume growth of submanifolds} and recalling (\ref{volume growth in local coordinates}) and (\ref{major U0 i}) , we have
	\begin{equation*}
	\begin{aligned}
		\limsup_{n\to+\infty}\frac{1}{n}\log\int_{M}\left\|Df^{n}_{x}\right\|\,dx&=\limsup_{n\to+\infty}\frac{1}{n}\log\int_{U_{0}}\left\|Df^{n}_{x}\right\|\,dx\\
		&\leq\limsup_{n\to+\infty}\frac{1}{n}\log{\rm Vol}\left(f^{n}\left(\sigma_{n}\right)\right)\\
		&\leq \max \left\{h_{\rm top}(f),\,\, \frac{\lambda^{+}(f)}{r}\right\}.
	\end{aligned}
\end{equation*}
	The proof of Theorem \ref{main theorem of decomposition of volume growth of derivatives} is then complete.
\end{proof}
\subsection{Lower bound of the volume growth rate}
The goal of this section is to establish the following result, which can be viewed as a consequence of Pesin theory and is, in a certain sense, already known.
The distinction here lies in the formulation: Theorem~\ref{volume of tangent cocycles bounds entropy} is stated directly in terms of the derivative $Df^{n}_{x}$, rather than the induced map $(Df^{n}_{x})^{\wedge}$ on exterior algebras of tangent spaces, as considered by Przytycki~\cite{Prz80} and Kozlovski~\cite{Koz98}.

\begin{Theorem}\label{volume of tangent cocycles bounds entropy}
	Let $f$ be a $C^r$ ($r>1$) diffeomorphism on a compact surface $M$. $$ h_{\rm top}(f)\leq\liminf_{n\to+\infty}\frac{1}{n}\log\int_{M}\left\|Df^{n}_{x}\right\|\,d x.$$
\end{Theorem}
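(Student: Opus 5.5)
By the variational principle it suffices to show that for every ergodic measure $\mu$ with $h(f,\mu)>0$ we have
$$h(f,\mu)\le\liminf_{n\to+\infty}\frac{1}{n}\log\int_M\|Df^n_x\|\,dx .$$
If $h_{\rm top}(f)=0$ there is nothing to prove. Indeed, $\|Df^n_x\|\ge |\det Df^n_x|^{1/2}$, and by change of variables $\int_M |\det Df^n_x|\,dx={\rm Vol}(M)$. Cauchy--Schwarz then gives
$$\int_M\|Df^n_x\|\,dx\ \ge\ \int_M |\det Df^n_x|^{1/2}\,dx,$$
which is not obviously bounded below. So instead I use $\|Df^n_x\|\ge \|Df^{-n}_{f^n x}\|^{-1}$; more simply, $\|Df^n_x\|^2\ge|\det Df^n_x|$, and Jensen gives $\int \|Df^n\|\ge \int_{\{|\det|\ge 1\}}1$. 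The set $\{|\det Df^n|\ge1\}$ has measure bounded below because $\int|\det|={\rm Vol}(M)$ and $|\det Df^n|\le C^n$. This yields subexponential lower bounds, which suffices for the zero-entropy case. Now fix such a $\mu$. By Ruelle's inequality $\mu$ is hyperbolic with exponents $\lambda^+(\mu)>0>\lambda^-(\mu)$.

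I will use Pesin theory to build a "fat" set of positive Lebesgue measure foliated by curves transverse to stable manifolds. Choose a Pesin block $\Lambda$ with $\mu(\Lambda)>1/2$ on which local stable and unstable manifolds have uniform size and the angles between them are bounded away from zero. Given $\varepsilon>0$ and large $n$, Katok's entropy formula yields an $(n,\delta)$-separated set $S_n\subset\Lambda\cap f^{-n}\Lambda$ with $\#S_n\ge e^{n(h(f,\mu)-\varepsilon)}$; by Poincaré recurrence and a pigeonhole argument we may also assume all points of $S_n$ lie in one small Pesin chart. Around each $w\in S_n$, consider the small box $R_w$ consisting of points $y$ near $w$, in Pesin coordinates at $w$, whose orbit stays in the Pesin charts at $f^k w$ for $0\le k\le n$. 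Each $R_w$ is roughly a product of a full-size stable direction and an unstable interval of length about $e^{-n(\lambda^+(\mu)+\varepsilon)}$. Separation makes the boxes $R_w$ pairwise disjoint for distinct $w\in S_n$. The Lebesgue measure of $R_w$ is at least $e^{-n(\lambda^+(\mu)+2\varepsilon)}$, with constants depending on $\Lambda$.

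On $R_w$, the tempered distortion of Pesin charts together with the hyperbolic form of $f$ in charts (which needs $C^{1+\alpha}$, available since $r>1$) gives $\|Df^n_y\|\ge e^{n(\lambda^+(\mu)-2\varepsilon)}$. This holds because a vector in the unstable cone is expanded at rate $e^{\lambda^+-\varepsilon}$ in Lyapunov norm at each step, and the Lyapunov and Riemannian norms differ by at most $e^{\varepsilon n}$ at the endpoint $f^n w\in\Lambda$. Summing over the disjoint boxes,
$$\int_M\|Df^n_x\|\,dx\ \ge\ \sum_{w\in S_n}{\rm Leb}(R_w)\,e^{n(\lambda^+-2\varepsilon)}\ \ge\ e^{n(h(f,\mu)-\varepsilon)}e^{-n(\lambda^++2\varepsilon)}e^{n(\lambda^+-2\varepsilon)},$$
so the liminf is at least $h(f,\mu)-5\varepsilon$. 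Letting $\varepsilon\to0$ and taking the supremum over $\mu$ finishes the proof. Note that the argument uses only $\|Df^n\|$, which dominates the unstable expansion, so no exterior algebra is needed in dimension two.

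The main obstacle is the box construction: we need ${\rm Leb}(R_w)\gtrsim e^{-n(\lambda^++\varepsilon)}$ and uniform derivative growth on all of $R_w$, not only at $w$. I will do this in Lyapunov charts by a graph-transform or cone-invariance argument. The set of points staying $n$ steps in the charts contains a strip bounded by $f^{-n}$-preimages of unstable-cone curves of length $\approx\delta$ at $f^n w$. Its unstable width is the inverse of the expansion, and its stable width is of order $1$ because stable directions contract. This is standard Katok-style horseshoe bookkeeping, and I would follow Przytycki's argument adapted to $\|Df^n\|$.
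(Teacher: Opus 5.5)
\textbf{The gap: the case $h_{\rm top}(f)=0$.} Your zero-entropy paragraph fails, and it cannot be repaired, because the inequality is false in that case. The facts $\int_M|\det Df^n_x|\,dx={\rm Vol}(M)$ and $|\det Df^n_x|\le C^n$ give no quantitative lower bound on ${\rm Vol}(A_n)$, where $A_n=\{|\det Df^n_x|\ge 1\}$. Combined, they only yield $C^n{\rm Vol}(A_n)\ge\int_{A_n}|\det Df^n_x|\,dx\ge{\rm Vol}(A_n)$.

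For a counterexample, take the north--south map $f(z)=\lambda z$, $\lambda>1$, on the Riemann sphere with the spherical metric $2|dz|/(1+|z|^2)$. It is conformal, with $\|Df^n_z\|=\lambda^n(1+|z|^2)/(1+\lambda^{2n}|z|^2)$. A polar-coordinate computation gives
$$\int_{S^2}\|Df^n_x\|\,dx=\frac{8\pi n\lambda^n\log\lambda}{\lambda^{2n}-1},$$
so the $\liminf$ equals $-\log\lambda<0=h_{\rm top}(f)$. Here $A_n$ is the disc $|z|\le\lambda^{-n/2}$, of area about $4\pi\lambda^{-n}$. The exterior-algebra versions of Przytycki and Kozlovski avoid this because $(Df^n_x)^{\wedge}$ contains the degree-zero component, of norm $1$.

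So the statement must be read with $h_{\rm top}(f)>0$, or at least assuming a hyperbolic ergodic measure exists. The paper's proof also needs this, since its variational-principle step runs over hyperbolic ergodic measures, and the north--south map has none. The degenerate case is needed only for Theorem~\ref{main result} with $h_{\rm top}(f)=0$. There the hypothesis forces $\lambda^{+}(f)=0$, and then $\|Df^n_x\|\ge|\det Df^n_x|/\max_y\|Df^n_y\|$ integrates to ${\rm Vol}(M)e^{-o(n)}$, which suffices. You should replace your zero-entropy paragraph by this restriction, not by an argument.

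\textbf{The positive-entropy case.} For $h_{\rm top}(f)>0$ your argument is sound and follows the paper's route. For a hyperbolic ergodic $\mu$, you take a separated set of $\mu$-typical points of cardinality about $e^{n(h(f,\mu)-\varepsilon)}$ from Katok's formula. You then bound $\int\|Df^n_x\|\,dx$ from below on pairwise disjoint neighbourhoods of these points and conclude by the variational principle.

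The difference lies in the local estimate. The paper quotes Przytycki's Lemma~2.1, whose bound $C\varepsilon^2 2^{-n}$ loses a factor $2^{-n}$; it removes this loss by passing to iterates $f^q$. You instead reprove the estimate in Lyapunov charts, which is where $C^{1+\alpha}$ enters, with only an $e^{-4\varepsilon n}$ loss, so no iterate trick is needed. The price is the cone-invariance bookkeeping you sketch at the end.

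For the disjointness of the boxes $R_w$, make explicit that the charts have Riemannian radius below $\delta/2$, so that $R_w\subset B(w,n,\delta/2)$.
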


We first recall a result of Katok (Theorem 1.1 in \cite{Kat80}).

Let $\mu$ be an ergodic measure. Given $\lambda\in(0,1)$, we define $S_{\lambda}(n,\varepsilon)\subset M$ to be a finite set such that
$$\mu\left(\bigcup_{x\in S_{\lambda}(n,\varepsilon)} B(x,n,\varepsilon)\right)\ge \lambda,$$
and whose cardinality is minimal among all finite subsets of $M$ with this property. Given a subset $\Omega$, recall (see \cite{Wal82} for background in ergodic theory)  that a subset $S\subset \Omega$ is called a \emph{$(n,\varepsilon)$-separated set} if for any $x,y\in S$ with $x\neq y$, there is $0\leq k<n$ such that $d(f^{k}(x), f^{k}(y))>\varepsilon$. Moreover, a $(n,\varepsilon)$-separated set $S$ is called \emph{maximal} if its cardinality is larger than or equal to the cardinality of any other $(n,\varepsilon)$-separated set. Given a subset $\Omega$, let $S(n,\varepsilon,\Omega)$ denotes a maximal $(n,\varepsilon)$-separated set of $\Omega$. By maximality, the union of the dynamical balls $\{B(x,n,\varepsilon)\}_{x\in S(n,\varepsilon,\Omega)}$ covers $\Omega$.

\begin{Lemma}[Katok, \cite{Kat80}]\label{Katok entropy formula}
	Let $f$ be a homeomorphism on a compact metric space $X$ and let $\mu$ be an ergodic measure.  Then for any $\lambda\in(0,1)$, $$h(f,\mu)=\lim_{\varepsilon\to 0}\liminf_{n\to +\infty}\frac{1}{n}\log\# S_{\lambda}(n,\varepsilon)=\lim_{\varepsilon\to 0}\limsup_{n\to +\infty}\frac{1}{n}\log\# S_{\lambda}(n,\varepsilon).$$
\end{Lemma}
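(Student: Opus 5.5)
We would follow Katok's original argument and prove the two inequalities
$$\lim_{\varepsilon\to0}\limsup_{n\to+\infty}\frac1n\log\#S_\lambda(n,\varepsilon)\le h(f,\mu)\le \lim_{\varepsilon\to0}\liminf_{n\to+\infty}\frac1n\log\#S_\lambda(n,\varepsilon)$$
separately. Both rest on the Shannon--McMillan--Breiman theorem for a suitable finite partition, applied to the ergodic measure $\mu$. Since $\#S_\lambda(n,\varepsilon)$ is non-increasing in $\varepsilon$, the limits in $\varepsilon$ exist in $[0,+\infty]$, so only these two inequalities are needed.

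\textbf{Upper bound.} Fix $\varepsilon>0$ and $\delta>0$, and take a finite measurable partition $\xi$ of $X$ with ${\rm diam}(A)<\varepsilon$ for every $A\in\xi$. For $x$ in an atom $\xi^n(x)$ of $\xi^n:=\bigvee_{k=0}^{n-1}f^{-k}\xi$, the whole atom is contained in $B(x,n,\varepsilon)$. By Shannon--McMillan--Breiman and ergodicity, $-\frac1n\log\mu(\xi^n(x))\to h(f,\xi)\le h(f,\mu)$ for $\mu$-a.e.\ $x$. Hence for large $n$ the union $G_n$ of those atoms with $\mu(\xi^n(x))\ge e^{-n(h(f,\mu)+\delta)}$ has measure $>\lambda$. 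There are at most $e^{n(h(f,\mu)+\delta)}$ such atoms. Choosing one point in each gives a set of centers whose $(n,\varepsilon)$-balls cover $G_n$. Therefore
$$\#S_\lambda(n,\varepsilon)\le e^{n(h(f,\mu)+\delta)}\quad\text{for large } n,$$
and the upper bound follows since $\delta$ is arbitrary. If $h(f,\mu)=\infty$ there is nothing to prove.

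\textbf{Lower bound.} This is the main obstacle, because a single $(n,\varepsilon)$-ball may meet many atoms of a partition. We would follow Katok's construction.

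\emph{Choice of partition.} Given $\delta>0$, choose a finite partition $\xi=\{A_1,\dots,A_k\}$ with $h(f,\xi)>h(f,\mu)-\delta$ (or $>1/\delta$ if the entropy is infinite). Pick compact sets $B_i\subset A_i$ with $\mu(A_i\setminus B_i)$ so small that the modified partition $\eta=\{B_0,B_1,\dots,B_k\}$, with $B_0:=X\setminus\bigcup_{i\ge1}B_i$, satisfies $h(f,\eta)>h(f,\xi)-\delta$ (by continuity of conditional entropy) and $\mu(B_0)<\delta$.

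\emph{Counting atoms per ball.} Let $0<\varepsilon<\min_{i\ne j\ge1}d(B_i,B_j)/2$. Then any set of diameter $<2\varepsilon$ meets at most two elements of $\eta$, namely some $B_i$ and $B_0$. Let $W_n$ be the set of $x$ whose orbit up to time $n$ visits $B_0$ at most $2\delta n$ times and for which $\mu(\eta^n(x))\le e^{-n(h(f,\eta)-\delta)}$. By the Birkhoff ergodic theorem and Shannon--McMillan--Breiman, $\mu(W_n)>1-(1-\lambda)/2$ for large $n$. For $y$ with $B(y,n,\varepsilon)\cap W_n\neq\emptyset$, the ball $B(y,n,\varepsilon)\cap W_n$ is covered by at most
$$\binom{n}{2\delta n}2^{2\delta n}\le e^{n\,c(\delta)}$$
atoms of $\eta^n$, where $c(\delta)\to0$ as $\delta\to0$. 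This uses the fact that along points of $W_n$ the choice of element of $\eta$ is forced except at the at most $2\delta n$ visits to $B_0$.

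\emph{Conclusion.} Any family of $(n,\varepsilon)$-balls covering measure $\ge\lambda$ must cover measure at least $\lambda/2$ of $W_n$. By the previous step, such a family must contain at least
$$\frac{\lambda}{2}\,e^{n(h(f,\eta)-\delta-c(\delta))}$$
balls. Hence
$$\liminf_{n\to+\infty}\frac1n\log\#S_\lambda(n,\varepsilon)\ge h(f,\mu)-2\delta-c(\delta),$$
and letting $\varepsilon\to0$ and then $\delta\to0$ gives the lower bound.
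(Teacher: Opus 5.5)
Your argument is correct, and it is essentially Katok's original proof of his Theorem 1.1. That is exactly what the paper relies on here, since it gives no proof of its own and only cites Katok.

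One small slip: for a family of balls covering measure $\geq \lambda$ to cover measure at least $\lambda/2$ of $W_n$, you need $\mu(W_n) > 1-\lambda/2$. Your condition $\mu(W_n) > 1-(1-\lambda)/2$ does not give this when $\lambda < 1/2$. The fix is immediate, since $\mu(W_n)\to 1$.
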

The following lemma, due to Przytycki, as an application of Pesin theory, provides a uniform lower bound on the integral of the derivative over dynamical balls centered at typical points w.r.t. a hyperbolic ergodic measure.
\begin{Lemma}\cite[Lemma 2.1]{Prz80}\label{Przytycki Lemma}
		Let $f$ be a $C^r$ ($r>1$) diffeomorphism on a compact surface $M$ and let $\mu$ be a hyperbolic ergodic measure. There are a subset $\Omega\subset M$ with $\mu\left(\Omega\right)>\frac{1}{2}$ and a constant $C>0$ such that for any $z\in\Omega$, any $n\in\mathbb{N}$ and any $0<\varepsilon<2026$, $$\int_{B(z,n,\varepsilon)}\left\|Df^{n}_{x}\right\|\,dx\geq C\cdot\varepsilon^{2}\cdot\frac{1}{2^{n}}.$$
\end{Lemma}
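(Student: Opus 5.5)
The plan is to prove the estimate with Pesin theory in Lyapunov charts. The weak factor $2^{-n}$ is there to absorb the tempered (subexponential) distortions of the charts along the orbit of $z$. Fix a small $\delta>0$, say $\delta<\frac{1}{10}\min\{\log 2,\lambda^{+}(f,\mu),-\lambda^{-}(f,\mu)\}$, and take the $\delta$-Lyapunov charts $\Phi_x:[-q(x),q(x)]^2\to M$ for $\mu$-a.e. $x$. In these charts $f$ is $C^1$-close to $\mathrm{diag}(e^{\lambda^{+}},e^{\lambda^{-}})$, and the size $q$ and the chart distortion $\|D\Phi_x^{\pm1}\|\le K(x)$ satisfy $q(f^{\pm1}x)\ge e^{-\delta}q(x)$ and $K(f^{\pm1}x)\le e^{\delta}K(x)$. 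Take $\Omega$ to be a Pesin block $\{x: q(x)\ge q_0,\ K(x)\le K_0\}$, with $q_0,K_0$ chosen so that $\mu(\Omega)>1/2$. Then along the orbit of $z\in\Omega$ we have $q(f^kz)\ge q_0e^{-k\delta}$ and $K(f^kz)\le K_0e^{k\delta}$.

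Next, fix $z\in\Omega$, $n$, and $\varepsilon$. Since $\varepsilon<2026$ only needs to be handled up to constants, we may replace $\varepsilon$ by $\min\{\varepsilon,q_0\}$, which changes only $C$. In the chart at $f^nz$, consider the box $R_n:=[-\eta_n,\eta_n]^2$ with $\eta_n:=c\,\varepsilon\, e^{-2n\delta}/K_0$ and $c$ small. Pull it back by the chart maps: $Q:=\Phi_z(\tilde f^{-n}(R_n)\cap[-c\varepsilon/K_0,c\varepsilon/K_0]^2)$, where $\tilde f$ is the map $f$ in charts. By the standard cone and graph-transform argument for the almost-linear maps $\tilde f$:
\begin{enumerate}
\item The horizontal segments $\{(t,s): |t|\le c\varepsilon/K_0\}$ of the chart at $z$ stay $u$-admissible under $\tilde f^k$, i.e.\ they stay graphs over $E^u$ with slope at most $1$. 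Only a subsegment $J_s$ survives; its image under $\tilde f^k$ stays inside the chart box of size $\eta_k\ge\eta_n$ for all $0\le k\le n$, and $\tilde f^n(J_s)$ is a $u$-admissible graph of horizontal length $\asymp\eta_n$. This holds for every height $|s|\le c\varepsilon/K_0$, because vertical (stable) directions contract and the heights never leave the box.
\item Since every point of $Q$ stays within distance $K(f^kz)\eta_k\cdot O(1)<\varepsilon$ of $f^kz$ for $0\le k<n$, we have $Q\subset B(z,n,\varepsilon)$.
\end{enumerate}

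Then apply Fubini in the chart at $z$ with horizontal lines, in the same spirit as the proof of Theorem~\ref{main theorem of decomposition of volume growth of derivatives}. Using $\|Df^n_x\|\ge\|Df^n_x(v)\|$ for the unit tangent $v$ of the horizontal line, we get
$$\int_{B(z,n,\varepsilon)}\|Df^n_x\|\,dx\;\ge\;K_0^{-2}\int_{|s|\le c\varepsilon/K_0}\mathrm{Vol}\bigl(f^n(\Phi_z(J_s))\bigr)\,ds.$$
Each image $f^n(\Phi_z(J_s))$ has length at least $K(f^nz)^{-1}\eta_n\gtrsim \varepsilon e^{-3n\delta}/K_0^2$. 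Multiplying by the transverse width $2c\varepsilon/K_0$ gives a lower bound of $C\varepsilon^2e^{-3n\delta}\ge C\varepsilon^2 2^{-n}$, by the choice of $\delta$.

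I expect the main obstacle to be the second step. One must check uniformly in $s$ that the horizontal pieces remain $u$-admissible graphs spanning the whole shrinking box $R_n$ at time $n$, with the correct length, despite the nonlinearity. This is exactly where $C^{1+}$ regularity (through the Pesin charts, where the $C^1$-distance of $\tilde f$ to the linear map is at most $\delta$) and hyperbolicity of $\mu$ are used. I would argue by backward induction from $R_n$: the preimage under $\tilde f$ of a $u$-admissible graph of horizontal length $\ell$ in the box at $f^{k+1}z$ is a $u$-admissible graph of horizontal length at most $e^{-\lambda^{+}+2\delta}\ell$ lying inside the box at $f^kz$. For the uniformity in $s$ I would use the invariance of the stable cone, which keeps the vertical coordinate contracting.
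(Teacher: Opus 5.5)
Your argument is correct and is essentially the Pesin-chart argument behind Przytycki's Lemma 2.1, which the paper cites rather than reproves. Foliating the Bowen ball by chart-horizontal curves that stay in the unstable cone, and using $\|Df^n_x\|\ge\|Df^n_x(v)\|$ along them, is exactly the reduction to one-dimensional unstable direction that the paper's remark alludes to. One caution: run the admissibility induction forward as in your item 1, not backward. Preimages under $\tilde f$ of arbitrary $u$-admissible graphs need not be $u$-admissible, but the surviving pieces are sub-arcs of forward images of horizontal segments, and those are.
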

\begin{Remark}
	The original statement of Lemma~\ref{Przytycki Lemma} is formulated for the induced map  $\left(Df^{n}_{x}\right)^{\wedge}$ acting on the exterior algebra of the tangent space.
	However, one can see from the proof that the same conclusion holds when
	$\left(Df^{n}_{x}\right)^{\wedge}$ is replaced by $\left\|Df^{n}_{x}\right\|$, since for a hyperbolic measure on a surface the unstable dimension is equal to one.
\end{Remark}

\begin{proof}[Proof of Theorem \ref{volume of tangent cocycles bounds entropy}]
	Given a hyperbolic ergodic measure $\mu$, by Lemma \ref{Przytycki Lemma}, there are a subset $\Omega\subset M$ with $\mu\left(\Omega\right)\geq\frac{1}{2}$ and a constant $C>0$ such that for any $z\in\Omega$, any $n\in\mathbb{N}$ and any $0<\varepsilon<2026$, $$\int_{B(z,n,\varepsilon)}\left\|Df^{n}_{x}\right\|\,dx\geq C\cdot\varepsilon^{2}\cdot\frac{1}{2^{n}}.$$ Let $S(n,2\varepsilon,\Omega)$ be a maximal $(n,2\varepsilon)$ separated set of $\Omega$. By definition, for any $z_{1}, z_{2}\in S(n,2\varepsilon,\Omega)$, $$B(z_{1}, n,\varepsilon)\cap B(z_{2}, n,\varepsilon)=\emptyset.$$
We have
$$\begin{aligned}
	\int_{M}\left\|Df^{n}_{x}\right\|\,d x
	&\geq \# S(n,2\varepsilon,\Omega)\cdot\min_{z\in S(n,2\varepsilon,\Omega)}\int_{B(z,n,\varepsilon)} \left\|Df^{n}_{x}\right\|\,dx\\
	&\geq \# S(n,2\varepsilon,\Omega)\cdot C\cdot\varepsilon^{2}\cdot\frac{1}{2^{n}}.
\end{aligned}$$
Then $$\liminf_{n\to+\infty}\frac{1}{n}\log\int_{M}\left\|Df^{n}_{x}\right\|\,d x\geq \liminf_{n\to+\infty}\frac{1}{n}\log \#S(n,2\varepsilon
,\Omega)-\log 2.$$

Since $\mu(\Omega)>\frac{1}{2}$, we have $\#S(n,2\varepsilon,
\Omega)\geq \#S_{\frac{1}{2}}(n,2\varepsilon)$. By Lemma \ref{Katok entropy formula} and the arbitrariness of $\varepsilon$, we then have $$\liminf_{n\to+\infty}\frac{1}{n}\log\int_{M}\left\|Df^{n}_{x}\right\|\,d x\geq h(f,\mu)-\log 2.$$ By the variational principle, we have
\begin{equation}\label{rough estimations of volume grwoth 3}
\liminf_{n\to+\infty}\frac{1}{n}\log\int_{M}\left\|Df^{n}_{x}\right\|\,d x\geq h_{\rm top}(f)-\log 2.
\end{equation}
Note that $$\liminf_{n\to+\infty}\frac{1}{n}\log\int_{M}\left\|Df^{n}_{x}\right\|\,dx=\liminf_{n\to+\infty}\frac{1}{qn}\log\int_{M}\left\|Df^{qn}_{x}\right\|\,dx,\quad h_{\rm top}(f)=\frac{h_{\rm top}(f^{q})}{q}.$$ Replacing $f$ by $f^{q}$ and putting $\frac{1}{q}$ on both sides in (\ref{rough estimations of volume grwoth 3}), by the arbitrariness of $q$, we finally get the conclusion: $$\liminf_{n\to+\infty}\frac{1}{n}\log\int_{M}\left\|Df^{n}_{x}\right\|\,dx\geq h_{\rm top}(f).$$

\end{proof}

\section{Appendix}

	\begin{Lemma}\label{volume growth geq entropy abstract prototype}
	Let $f$ be a homeomorphism on a compact metric space $(X, d)$. Given $n\in\mathbb{N}$ and $\varepsilon>0$, let $\left\{B(x_{i}, n,\varepsilon)\right\}_{1\leq i\leq N}$ be a collection of mutually disjoint $(n,\varepsilon)$ dynamical balls. There is $0\leq k_{n}\leq n-1$ such that  $$\sum_{i=1}^{N-1} d\left(f^{k_{n}}\left(x_{i}\right), f^{k_{n}}\left(x_{i+1}\right)\right)\geq \frac{\left(N-1\right)\varepsilon}{n}.$$
   \end{Lemma}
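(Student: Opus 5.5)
The plan is a pigeonhole (averaging) argument over the $n$ time steps. It uses only two facts: disjointness of the dynamical balls forces each pair of consecutive centres to separate at some time, and there are only $n$ times available.

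\emph{Step 1: each consecutive pair separates at some time.} Fix $1\le i\le N-1$. The centre $x_{i+1}$ lies in its own ball $B(x_{i+1},n,\varepsilon)$. Since this ball is disjoint from $B(x_i,n,\varepsilon)$, we get $x_{i+1}\notin B(x_i,n,\varepsilon)$. By the definition of the dynamical ball there is then some $k_i\in[0,n-1]$ with
$$d\left(f^{k_i}(x_i),f^{k_i}(x_{i+1})\right)\geq\varepsilon.$$

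\emph{Step 2: sum over all times.} For $0\le k\le n-1$ put
$$S(k):=\sum_{i=1}^{N-1} d\left(f^{k}(x_{i}),f^{k}(x_{i+1})\right).$$
All summands are nonnegative, so for each $i$ we may discard every time except $k_i$. This gives
$$\sum_{k=0}^{n-1}S(k)=\sum_{i=1}^{N-1}\sum_{k=0}^{n-1}d\left(f^{k}(x_i),f^{k}(x_{i+1})\right)\geq\sum_{i=1}^{N-1}d\left(f^{k_i}(x_i),f^{k_i}(x_{i+1})\right)\geq (N-1)\varepsilon .$$

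\emph{Step 3: pigeonhole.} The sum in Step 2 has $n$ terms, so some $k_n\in[0,n-1]$ satisfies $S(k_n)\geq (N-1)\varepsilon/n$. This is exactly the asserted inequality.

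There is no real obstacle. The only point needing care is the boundary convention: Step 1 uses that the complement of the open ball $B(x_i,n,\varepsilon)$ gives a distance $\geq\varepsilon$ at some time $k\in[0,n-1]$, which matches the definition used in the paper.
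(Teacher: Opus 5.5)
Your proposal is correct and follows essentially the same argument as the paper: you get a separation $\geq\varepsilon$ for each consecutive pair from disjointness, then sum over $i$ and $k$, exchange the order of summation, and pigeonhole over the $n$ times. Your Step 1 also spells out why disjointness gives the separation (namely $x_{i+1}\in B(x_{i+1},n,\varepsilon)$, hence $x_{i+1}\notin B(x_i,n,\varepsilon)$), which the paper states without comment.
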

\begin{proof}
	Since the dynamical balls $\left\{B(x_{i}, n,\varepsilon)\right\}_{1\leq i\leq N}$ are mutually disjoint, 
	$$ \max_{0 \leq k \leq n-1} d\left(f^{k}(x_i), f^{k}(x_{i+1})\right) \geq \varepsilon. $$
	Hence
	$$ \sum_{i=1}^{N-1} \sum_{k=0}^{n-1} d\left(f^{k}(x_i), f^{k}(x_{i+1})\right) \geq \sum_{i=1}^{N-1} \max_{0 \leq k \leq n-1} d\left(f^{k}(x_i), f^{k}(x_{i+1})\right) \geq (N-1)\varepsilon. $$
	
	Changing the order of summation yields:
	$$ \sum_{k=0}^{n-1}  \sum_{i=1}^{N-1} d\left(f^{k}(x_i), f^{k}(x_{i+1})\right) \geq (N-1)\varepsilon. $$
	
	Hence, there must exist at least one index $k_n \in \{0, 1, \dots, n-1\}$ such that
	$$\sum_{i=1}^{N-1} d\left(f^{k_{n}}\left(x_{i}\right), f^{k_{n}}\left(x_{i+1}\right)\right)\geq \frac{\left(N-1\right)\varepsilon}{n}.$$
\end{proof}

Recall that $h_{\rm top}(f,\Omega)$ denotes the topological entropy on a subset $\Omega$, defined in the usual way by $$h_{\rm top}\left(f,\Omega\right):=\lim_{\varepsilon\to 0}\limsup_{n \to +\infty}\frac{1}{n}\log\# S(n,\varepsilon,\Omega)$$ where $S(n,\varepsilon,\Omega)$ denotes a maximal $(n,\varepsilon)$-separated subset of $\Omega$.

\begin{Lemma}\label{volume growth geq entropy}
Let $f$ be a $C^{1}$ diffeomorphism on a compact manifold $M$ and let $\sigma :[0,1]\to M$ be a $C^{1}$ curve. We have $$h_{\rm top}\left(f,\sigma\right)\leq \limsup_{n\to+\infty}\frac{1}{n}\log^{+} {\rm Vol}\left(f^{n}\left(\sigma\right)\right).$$
\end{Lemma}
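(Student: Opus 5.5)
The plan is to combine Lemma~\ref{volume growth geq entropy abstract prototype} with the elementary fact that the length of a curve bounds the sum of consecutive distances between points placed in order along it. Fix $\varepsilon>0$ and $n\in\mathbb{N}$. Let $S(n,2\varepsilon,\sigma)$ be a maximal $(n,2\varepsilon)$-separated subset of $\sigma$, with $N:=\#S(n,2\varepsilon,\sigma)$. By separation, the balls $B(x,n,\varepsilon)$, $x\in S(n,2\varepsilon,\sigma)$, are pairwise disjoint.

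Enumerate the points as $x_{1},\dots,x_{N}$ in increasing order of their parameters $t_{1}<\dots<t_{N}$ along $\sigma$; this is possible since $\sigma$ is embedded. Lemma~\ref{volume growth geq entropy abstract prototype} then gives some $0\le k_{n}\le n-1$ with
$$\sum_{i=1}^{N-1} d\bigl(f^{k_{n}}(x_{i}),f^{k_{n}}(x_{i+1})\bigr)\ \geq\ \frac{(N-1)\varepsilon}{n}.$$
For each $i$, the distance $d\bigl(f^{k_n}(x_i),f^{k_n}(x_{i+1})\bigr)$ is at most the length of the arc $f^{k_n}\circ\sigma|_{[t_i,t_{i+1}]}$. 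These arcs have disjoint interiors, so summing gives
$$\frac{(N-1)\varepsilon}{n}\le {\rm Vol}\bigl(f^{k_{n}}(\sigma)\bigr)\le \max_{0\le k\le n-1}{\rm Vol}\bigl(f^{k}(\sigma)\bigr).$$

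Set $a_{k}:=\max\{{\rm Vol}(f^{k}(\sigma)),1\}$ and $\chi:=\limsup_{k}\frac1k\log a_{k}=\limsup_{k}\frac1k\log^{+}{\rm Vol}(f^{k}(\sigma))$, which is finite because ${\rm Vol}(f^k\sigma)\le\|Df\|^k{\rm Vol}(\sigma)$. The last step, which is the only slightly delicate one, is converting the maximum over $k\le n-1$ into the limsup rate. Given $\delta>0$, there is $C_\delta$ with $a_{k}\le C_{\delta}e^{k(\chi+\delta)}$ for all $k\geq 0$. Hence
$$\max_{k<n}a_k\le C_\delta e^{n(\chi+\delta)},$$
so $N\le 1+nC_\delta e^{n(\chi+\delta)}/\varepsilon$. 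Taking $\frac1n\log$, then $\limsup_n$, then letting $\delta\to0$, gives $\limsup_n\frac1n\log\#S(n,2\varepsilon,\sigma)\le\chi$ for every $\varepsilon$. Letting $\varepsilon\to0$ yields $h_{\rm top}(f,\sigma)\le\chi$.

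The $\log^{+}$ in the statement exists precisely to handle curves whose iterates shrink, where $\log{\rm Vol}$ could be negative while the entropy is still nonnegative. Beyond this, no real obstacle is expected. The essential ingredients are the pigeonhole choice of time in Lemma~\ref{volume growth geq entropy abstract prototype} and the ordering of the points along the embedded curve, which lets a single arc-length bound control all the consecutive distances at once.
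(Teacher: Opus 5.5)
Your proof is correct and follows the paper's argument. You take a maximal $(n,2\varepsilon)$-separated set ordered along the embedded curve, use the pigeonhole Lemma~\ref{volume growth geq entropy abstract prototype} to choose a time $k_n$, and obtain the arc-length bound ${\rm Vol}(f^{k_n}(\sigma))\geq (N-1)\varepsilon/n$.

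The only difference is the final bookkeeping. Your uniform bound $a_k\le C_\delta e^{k(\chi+\delta)}$ replaces the paper's passage to $\frac{1}{k_n}\log^{+}{\rm Vol}(f^{k_n}(\sigma))$, which relies on the assertion $k_n\to+\infty$. The paper states that assertion without justification, and it only holds along the subsequence where $\#S(n,2\varepsilon,\sigma)$ grows exponentially, so your version is slightly cleaner.
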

\begin{proof}
	We may assume $h_{\rm top}\left(f,\sigma\right)>0$, otherwise the inequality is trivially true.

	Given $0<\delta<h_{\rm top}\left(f,\sigma\right)/2$, let $\varepsilon>0$ be such that $$\limsup_{n \to +\infty}\frac{1}{n}\log \# S(n,2\varepsilon,\sigma)\geq h_{\rm top}\left(f,\sigma\right)-\delta>0$$ where $S(n,2\varepsilon,\sigma)$ denotes a maximal $(n,2\varepsilon)$-separated set of $\sigma$. Write $S_{n}:=\#S(n, 2\varepsilon,\sigma)$ for simplicity. We enumerate the points in $S(n,2\varepsilon,\sigma)$ as $\{x_{1}, x_{2}, \cdots x_{S_{n}}\}$ in such a way that $\sigma^{-1}\left(x_{i}\right)< \sigma^{-1}\left(x_{i+1}\right)$. Note that By definition, for any $z, z'\in S(n,2\varepsilon,\sigma)$, $$B(z, n,\varepsilon)\cap B(z', n,\varepsilon)=\emptyset.$$ By Lemma \ref{volume growth geq entropy abstract prototype}, there is $0\leq k_{n}\leq n-1$ such that  $$\sum_{i=1}^{S_{n}-1} d\left(f^{k_{n}}\left(x_{i}\right), f^{k_{n}}\left(x_{i+1}\right)\right)\geq \frac{\left(S_{n}-1\right)\varepsilon}{n}.$$ As a consequence, $${\rm Vol}\left(f^{k_{n}}\left(\sigma\right)\right)\geq \frac{\left(S_{n}-1\right)\varepsilon}{n}.$$ Hence, for all large $n$, $$\frac{1}{k_{n}}\log^{+} {\rm Vol}\left(f^{k_{n}}\left(\sigma\right)\right)\geq\frac{n}{k_{n}}\cdot\left(\frac{1}{n}\log\frac{\left(S_{n}-1\right)\varepsilon}{n}\right)\geq \frac{1}{n}\log\frac{\left(S_{n}-1\right)\varepsilon}{n}.$$ Note that $k_{n}\to +\infty$ as $n\to +\infty$. Putting "$\limsup_{n\to+\infty}$" on both sides, we then have $$\limsup_{n\to+\infty}\frac{1}{n}\log^{+} {\rm Vol}\left(f^{n}\left(\sigma\right)\right)\geq \limsup_{n\to+\infty}\frac{1}{k_{n}}\log^{+} {\rm Vol}\left(f^{k_{n}}\left(\sigma\right)\right)\geq h_{\rm top}\left(f,\sigma\right)-\delta.$$ By the arbitrariness of $\delta$, we then have $$\limsup_{n\to+\infty}\frac{1}{n}\log^{+} {\rm Vol}\left(f^{n}\left(\sigma\right)\right)\geq h_{\rm top}\left(f,\sigma\right).$$ 

\end{proof}

The following lemma establishes an upper semi-continuity property for integrals of sub-additive potentials along converging sequences of invariant measures. This result is a special case of \cite[Lemma~2.3]{CFH08}.
	\begin{Lemma}\label{upper continuity of subadditive functions}
	Let $f$ be a homeomorphism on a compact metric space $X$. Let $\mu_{n}$ be a sequence of invariant probability measures converging to some invariant probability measure $\mu$ and let $\varphi_{n}$ be a sequence of sub-additive continuous functions. We have $$\limsup_{n\to+\infty}\int\frac{\varphi_{n}}{n}d\,\mu_{n}\leq \int\limsup_{n\to+\infty}\frac{\varphi_{n}}{n}d\,\mu.$$
	\end{Lemma}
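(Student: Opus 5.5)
We prove the lemma through the ergodic decomposition of $\mu$ and a Kingman-type argument. One first shows, for each fixed $k$, the inequality
$$\limsup_{n\to+\infty}\int\frac{\varphi_{n}}{n}\,d\mu_{n}\leq \int\frac{\varphi_{k}}{k}\,d\mu ,$$
and then lets $k\to+\infty$. Sub-additivity means $\varphi_{m+l}\leq \varphi_{m}+\varphi_{l}\circ f^{m}$. Continuity of each $\varphi_k$ on the compact space $X$ gives the bound $|\varphi_{j}|\leq C_{k}$ for all $1\le j\le k$.

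\textbf{Step 1: a uniform $k$-block estimate.} Fix $k$ and $0\le j<k$. Write $n=j+qk+s$ with $0\le s<k$ and iterate sub-additivity:
$$\varphi_{n}(x)\leq \varphi_{j}(x)+\sum_{i=0}^{q-1}\varphi_{k}\bigl(f^{j+ik}(x)\bigr)+\varphi_{s}\bigl(f^{j+qk}(x)\bigr),$$
with the convention $\varphi_0=0$. Averaging over $j\in[0,k)$ and using $|\varphi_j|,|\varphi_s|\le C_k$ gives
$$\varphi_{n}(x)\leq \frac{1}{k}\sum_{i=0}^{n-1}\varphi_{k}\bigl(f^{i}(x)\bigr)+C'_{k},$$
where the constant $C'_k$ depends only on $k$ and not on $n$ or $x$.

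\textbf{Step 2: integrate against the invariant measure $\mu_n$.} Invariance of $\mu_n$ gives $\int\varphi_{k}\circ f^{i}\,d\mu_{n}=\int\varphi_{k}\,d\mu_{n}$ for every $i$. Hence
$$\int\frac{\varphi_{n}}{n}\,d\mu_{n}\leq \int\frac{\varphi_{k}}{k}\,d\mu_{n}+\frac{C'_{k}}{n}.$$
Since $\varphi_k$ is continuous and $\mu_n\to\mu$ weakly, letting $n\to+\infty$ yields the displayed inequality for every fixed $k$.

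\textbf{Step 3: let $k\to+\infty$.} We now have
$$\limsup_{n\to+\infty}\int\frac{\varphi_n}{n}\,d\mu_n\leq \inf_{k}\int\frac{\varphi_{k}}{k}\,d\mu .$$
The sequence $a_k=\int\varphi_k\,d\mu$ is sub-additive by invariance of $\mu$, so by Fekete's lemma $\inf_k a_k/k=\lim_k a_k/k$. Kingman's sub-additive ergodic theorem applies because $\varphi_1^+$ is bounded and hence integrable. It shows that $\varphi_k/k$ converges $\mu$-a.e. to a function $\varphi^*$ with $\int\varphi^*\,d\mu=\lim_k a_k/k$, where both sides may equal $-\infty$. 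The limsup in the statement equals this a.e. limit, which proves the lemma.

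\textbf{Main obstacle.} The only delicate point is the uniformity in Step 1. The constant must not depend on $n$, which is why the averaging over the $k$ phases $j$ is essential. Once that uniform bound is in place, the rest is routine. If one prefers to avoid Kingman's theorem, Step 3 can instead use Fatou's lemma applied to $\sup_{k\ge K}\varphi_k/k$, which is bounded above, combined with monotone-limit arguments. This is the route of \cite[Lemma~2.3]{CFH08}, but Kingman's theorem is the cleaner path.
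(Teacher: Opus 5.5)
Your proof is correct and follows essentially the paper's route: a block decomposition from sub-additivity, integration against the invariant $\mu_n$, the weak limit for a fixed block length $k$, and then the infimum over $k$ identified with $\int\limsup_{n}\frac{\varphi_n}{n}\,d\mu$ via Kingman's theorem. The differences are minor. The phase-averaging in Step 1 is not essential here, because invariance of $\mu_n$ already makes the single-phase remainder independent of $n$; the paper uses only the $j=0$ phase. Averaging does give the exact coefficient $\frac1k$, and so avoids the paper's tacit use of $\int\varphi_m\,d\mu_n\ge 0$ in the step $\frac{k}{n}\int\varphi_m\,d\mu_n\le\frac1m\int\varphi_m\,d\mu_n$, although that slip only costs $O(1/n)$. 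Finally, the ergodic decomposition announced in your opening sentence is never actually used.
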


	\begin{proof}
		Fix $m \ge 1$. For any $n$, write $n = km + r$ with $0 \le r < m$. By the sub-additivity of $\varphi_n$, we have
		\[
		\varphi_n \le \sum_{j=0}^{k-1} \varphi_m \circ f^{jm} + \varphi_r \circ f^{km}.
		\]
		Integrating w.r.t. the $f$-invariant measure $\mu_n$, we obtain
		\[
		\int \varphi_n \, d\mu_n \le k \int \varphi_m \, d\mu_n + C_m,
		\]
		where $C_m:= \max_{0 \le r < m} \max_{x}\varphi_{r}(x)$. Dividing by $n$ yields
		\[
		\int \frac{\varphi_n}{n} \, d\mu_n \le \frac{k}{n} \int \varphi_m \, d\mu_n + \frac{C_m}{n} \le \frac{1}{m} \int \varphi_m \, d\mu_n + \frac{C_m}{n}.
		\]
		Since $\mu_n \to \mu$ and $\varphi_m$ is continuous, letting $n \to +\infty$ gives
		\[
		\limsup_{n \to +\infty} \int \frac{\varphi_n}{n} \, d\mu_n \le \frac{1}{m} \int \varphi_m \, d\mu.
		\]
		Taking the infimum over $m \ge 1$, and applying Kingman's Sub-additive Ergodic Theorem (specifically, $\inf_m \frac{1}{m} \int \varphi_m \, d\mu = \int \lim_n \frac{\varphi_n}{n} \, d\mu$), we conclude
		\[
		\limsup_{n \to +\infty} \int \frac{\varphi_n}{n} \, d\mu_n \le \inf_{m \ge 1} \frac{1}{m} \int \varphi_m \, d\mu = \int \limsup_{n \to +\infty} \frac{\varphi_n}{n} \, d\mu.
		\]
	\end{proof}
	\begin{Lemma}[Reverse Fatou Lemma]\label{Reverse Fatou Lemma}
		Let $X$ be a compact metric space and let $\mu$ be a Borel probability measure on $X$.
		Let $\{\varphi_{n}\}_{n\ge1}$ be a sequence of measurable functions on $X$ such that
		$\sup_{n\ge1}\left\|\varphi_{n}\right\|_\infty < +\infty$.
		Then
		$$	\int_X \limsup_{n\to+\infty} \varphi_{n}\,d\mu \geq\limsup_{n\to+\infty}\int_X \varphi_{n}\,d\mu.$$
	\end{Lemma}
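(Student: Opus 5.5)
The proof is the standard argument for Fatou's lemma applied to the functions $M_0-\varphi_n$, which are non-negative because the family is uniformly bounded. Set
$$M_0:=\sup_{n\ge1}\left\|\varphi_{n}\right\|_\infty<+\infty$$
and $g_n:=M_0-\varphi_n$. Each $g_n$ is a measurable function with values in $[0,2M_0]$ almost everywhere, and it is integrable because $\mu$ is a probability measure.

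First, Fatou's lemma for non-negative functions applies to $\{g_n\}$ and gives
$$\int_X \liminf_{n\to+\infty} g_n\,d\mu\le \liminf_{n\to+\infty}\int_X g_n\,d\mu .$$

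Second, rewrite both sides in terms of $\varphi_n$. Pointwise we have $\liminf_n (M_0-\varphi_n)=M_0-\limsup_n\varphi_n$. This function is measurable and bounded by $2M_0$, so it is integrable. On the right-hand side, $\int g_n\,d\mu=M_0-\int\varphi_n\,d\mu$ since $\mu(X)=1$, hence $\liminf_n\int g_n\,d\mu=M_0-\limsup_n\int\varphi_n\,d\mu$. Substituting gives
$$M_0-\int_X\limsup_{n}\varphi_n\,d\mu\le M_0-\limsup_n\int_X\varphi_n\,d\mu.$$
Cancelling the finite constant $M_0$ yields the stated inequality.

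The only step needing care is checking that all quantities are finite, so that moving integrals and limits past the constant $M_0$ is legitimate. Both $\limsup_n\varphi_n$ and $\limsup_n\int\varphi_n\,d\mu$ lie in $[-M_0,M_0]$, so no $\infty-\infty$ expression arises. The essential-supremum bound holds only almost everywhere, so each $\varphi_n$ may be modified on a null set without changing any of the integrals. Compactness of $X$ is not needed.
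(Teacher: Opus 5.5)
Your proof is correct. The paper states this lemma without proof, treating it as classical, and your argument (Fatou's lemma applied to the non-negative functions $M_0-\varphi_n$, with the finiteness and null-set remarks) is exactly the standard one.
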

	
The following lemma (see the book \cite[Lemma 3.33]{Pat99}) gives an abstract comparison between the pointwise exponential growth rate and the exponential growth rate of the integral. We provide a proof for completeness.
\begin{Lemma}\label{exponent less than topological entropy abstract version}
	Let $(X,\mathcal{A},\mu)$ be a probability space and $\varphi_n:X\to(0,+\infty)$ a sequence of integrable functions. Then
	$$
	\limsup_{n\to+\infty}\frac{1}{n}\log \varphi_n(x)
	\le
	\limsup_{n\to+\infty}\frac{1}{n}\log\int_X \varphi_n\,d\mu,
	$$
	for $\mu$-a.e.\ $x\in X$.
\end{Lemma}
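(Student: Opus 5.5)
We argue by contradiction with a Markov/Borel--Cantelli-type estimate. Write $\Lambda:=\limsup_{n\to+\infty}\frac{1}{n}\log\int_X\varphi_n\,d\mu$. If $\Lambda=+\infty$ there is nothing to prove, so assume $\Lambda<+\infty$ and fix $\delta>0$. It suffices to show that for $\mu$-a.e.\ $x$,
\[
\limsup_{n\to+\infty}\frac{1}{n}\log\varphi_n(x)\le\Lambda+2\delta .
\]
Letting $\delta\to0$ along a countable sequence then gives the claim.

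Consider the sets $A_n:=\{x:\ \varphi_n(x)>e^{n(\Lambda+2\delta)}\}$, which are measurable because $\varphi_n$ is measurable. By the definition of $\Lambda$, there is $n_0$ such that $\int_X\varphi_n\,d\mu\le e^{n(\Lambda+\delta)}$ for all $n\ge n_0$. Markov's inequality (using $\varphi_n>0$) then yields
\[
\mu(A_n)\le e^{-n(\Lambda+2\delta)}\int_X\varphi_n\,d\mu\le e^{-n\delta},\qquad n\ge n_0 .
\]
Hence $\sum_n\mu(A_n)<+\infty$. By the Borel--Cantelli lemma, $\mu$-a.e.\ $x$ belongs to only finitely many $A_n$. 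For such $x$ we have $\frac1n\log\varphi_n(x)\le\Lambda+2\delta$ for all large $n$, which is the desired bound.

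The remaining details are minor. If $\Lambda=-\infty$, we run the same argument with $\Lambda$ replaced by an arbitrary real number $-K$ (since then $\int\varphi_n\le e^{-nK}$ eventually) and conclude that the pointwise $\limsup$ is $-\infty$ almost everywhere. We also intersect the countably many full-measure sets corresponding to $\delta=1/j$ (and $K\in\mathbb{N}$ in the degenerate case). I do not expect any real obstacle; the only point needing care is choosing the exponent strictly above $\Lambda$ so that the tail $e^{-n\delta}$ is summable.
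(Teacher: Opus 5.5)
Your proof is correct and follows essentially the same route as the paper: a Markov/Chebyshev bound on $A_n=\{\varphi_n \gtrsim e^{n(\Lambda+\text{margin})}\}$, summability of $\mu(A_n)$, then Borel--Cantelli and a countable intersection over the margin. Your $2\delta$ margin and your separate handling of $\Lambda=\pm\infty$ are small refinements that make explicit points the paper passes over; in particular, the paper does not treat the case $\Lambda=-\infty$.
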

\begin{proof}
	Write $$\lambda := \limsup_{n \to +\infty} \frac{1}{n} \log \int_X \varphi_{n} \, d\mu.$$ Given $\varepsilon > 0$, we have
	$$\int_X \varphi_{n} \, d\mu \ge \int_{A_n} \varphi_{n} \, d\mu \ge \mu(A_n)\cdot e^{n(\lambda + \varepsilon)}$$ where $$A_n := \left\{x \in X:~ \varphi_{n}(x) \ge e^{n(\lambda + \varepsilon)}\right\}.$$
	Rearranging and taking the limit yields:
	$$\limsup_{n \to +\infty} \frac{1}{n} \log \mu(A_n) \le \limsup_{n \to +\infty} \frac{1}{n} \log \int_X \varphi_{n} \, d\mu - (\lambda + \varepsilon) = -\varepsilon < 0.$$
	Hence $$\sum_{n=1}^{+\infty} \mu(A_n) <+\infty.$$ By the Borel-Cantelli Lemma, for $\mu$-a.e. $x$, there exists $N(x)$ such that for all $n \ge N(x)$, $$\varphi_{n}(x) < e^{n(\lambda + \varepsilon)},$$ which proves the lemma by the arbitrariness of $\varepsilon$.
\end{proof}

\begin{Lemma}\label{simple property of ergodicity}
	Let $f$ be a homeomorphism on a compact metric space $X$. Let $\mu$ be an ergodic probability measure and let $A$ be a subset with $\mu(A)>0$. Let $\varphi: X \to \mathbb{R}$ be a measurable function. If for any $x \in A$, there exists $n_x \in \mathbb{N}$ such that $\varphi(f^{-n}(x))=0$ for all $n \geq n_x$, then $\varphi(x)=0$ for $\mu$-a.e. $x$.
\end{Lemma}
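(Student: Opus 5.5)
We argue by contradiction, using invariance of $\mu$ under $f$ together with ergodicity. Let $Z:=\{x:\varphi(x)\neq 0\}$ and suppose $\mu(Z)>0$. The hypothesis says that every $x\in A$ has a backward orbit that eventually avoids $Z$: $f^{-n}(x)\notin Z$ for all $n\ge n_x$. The plan is to show that this is incompatible with the Poincaré recurrence / ergodic theorem applied to $f^{-1}$, which is also measure-preserving and ergodic for $\mu$.

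The key step is as follows. Since $\mu$ is ergodic for $f$, it is also ergodic for $f^{-1}$ (the invariant sets are the same). Apply the Birkhoff ergodic theorem to $f^{-1}$ and the indicator $\mathbf{1}_{Z}$. For $\mu$-a.e. $x$,
$$\lim_{N\to+\infty}\frac{1}{N}\sum_{n=0}^{N-1}\mathbf{1}_{Z}\left(f^{-n}(x)\right)=\mu(Z)>0.$$
In particular, for $\mu$-a.e. $x$ we have $f^{-n}(x)\in Z$ for infinitely many $n\ge 0$. Since $\mu(A)>0$, there is a point $x\in A$ in this full-measure set. For this $x$, the hypothesis gives $\mathbf{1}_{Z}(f^{-n}(x))=0$ for all $n\ge n_x$, so the Birkhoff averages tend to $0$. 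This contradicts $\mu(Z)>0$. Hence $\mu(Z)=0$, i.e. $\varphi=0$ $\mu$-almost everywhere.

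The only points requiring care are measurability issues. $Z$ is measurable because $\varphi$ is measurable. The set $A$ need not be assumed measurable beyond $\mu(A)>0$; if necessary, replace $A$ by a measurable subset of positive measure, or use outer measure, since the argument only needs a single point of $A$ in a full-measure set. I expect no genuine obstacle. The mildly delicate step is justifying ergodicity of $f^{-1}$ and the use of backward Birkhoff averages, which is immediate because $f$ is a homeomorphism preserving $\mu$. As an alternative, Poincaré recurrence for $f^{-1}$ on $Z$ combined with ergodicity, via $\mu\bigl(\bigcup_{n\ge0}f^{n}Z\bigr)=1$, gives the same conclusion.
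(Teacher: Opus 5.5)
Your proof is correct and follows essentially the paper's strategy of combining ergodicity with recurrence of backward orbits into $\{\varphi\neq 0\}$. The only difference is packaging. You apply Birkhoff's theorem to the ergodic map $f^{-1}$, which gives infinitely many backward visits to $\{\varphi\neq 0\}$ from $\mu$-a.e.\ point, hence from some point of $A$. The paper instead first uses ergodicity to show that the $f$-invariant set of points whose backward orbit eventually stays in $\{\varphi=0\}$ has full measure, and then needs only Poincar\'e recurrence for $f^{-1}$; this is essentially the alternative you sketch at the end.
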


\begin{proof}
Define $$Z:= \left\{x \in X \mid \exists N \in \mathbb{N} \text{ s.t. } \forall n \geq N, \varphi(f^{-n}(x)) = 0\right\}.$$  
	By definition, $Z$ is $f$-invariant, and $A \subset Z$. Since $\mu(A) > 0$, ergodicity implies $\mu(Z) = 1$. 
	
	If $\mu(\{\varphi \neq 0\}) > 0$, the Poincaré recurrence theorem applied to $f^{-1}$ implies that for $\mu$-a.e. $x \in \{\varphi \neq 0\}$, the backward orbit $f^{-n}(x)$ returns to $\{\varphi \neq 0\}$ infinitely often. This contradicts $\mu(Z) = 1$. Thus, $\varphi = 0$ $\mu$-a.e.
\end{proof}

	\begin{Lemma}[Decomposition into $\varepsilon$-bounded curves]\label{Decomposition into varepsilon bounded curves}
		Let $M$ be a compact manifold and let $r>1$.
		Then for any $0<\varepsilon\le \frac{1}{100}$, any $C^{r}$ curve $\sigma:[0,1]\to M$ with
		$$
		\left\|\sigma\right\|_{C^{r}}=\max_{1\le s\le r}\left\|d^{s}\sigma\right\|\le 1 \,\text{ and }\, \left\|d_{t}\sigma\right\| \geq 1/2, \,t\in[0,1],
		$$ there are $\varepsilon$-bounded $C^{r}$ curves $\{\sigma_{j}\}_{1\leq j \leq \left\lceil \frac{1}{\varepsilon}\right\rceil}$ such that
		$$
		\sigma\left([0,1]\right)=\bigcup_{j=1}^{\left\lceil \frac{1}{\varepsilon}\right\rceil}\sigma_{j}\left([0,1]\right).
		$$
	\end{Lemma}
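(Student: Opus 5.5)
We subdivide the parameter interval and use the $C^{r}$ bounds on $\sigma$ to show that each piece is $\varepsilon$-bounded. Set $N:=\left\lceil \frac{1}{\varepsilon}\right\rceil$. For $1\le j\le N$ we define the affine maps $\theta_{j}(t):=\frac{j-1+t}{N}$ from $[0,1]$ onto $[\frac{j-1}{N},\frac{j}{N}]$, and put $\sigma_{j}:=\sigma\circ\theta_{j}$. The intervals $\theta_j([0,1])$ cover $[0,1]$, so the images $\sigma_{j}([0,1])$ cover $\sigma([0,1])$. Each $\sigma_{j}$ is again a $C^{r}$ embedded curve, because it is the restriction of $\sigma$ to a subinterval, affinely reparametrized.

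It remains to check that each $\sigma_{j}$ is $\varepsilon$-bounded, i.e. $\left\|d\sigma_{j}\right\|\le\varepsilon$. By the chain rule, $d_{t}\sigma_{j}=\frac{1}{N}\,d_{\theta_{j}(t)}\sigma$. Hence
$$\left\|d\sigma_{j}\right\|\le \frac{1}{N}\left\|d\sigma\right\|\le\frac{1}{N}\left\|\sigma\right\|_{C^{r}}\le \frac{1}{N}\le\varepsilon,$$
since $N\ge 1/\varepsilon$. If "$\varepsilon$-bounded" is meant to include the boundedness condition $\max_{2\le s\le r}\left\|d^{s}\sigma_j\right\|\le\frac16\left\|d\sigma_j\right\|$ used elsewhere, we check that too. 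For $s\ge2$,
$$\left\|d^{s}\sigma_{j}\right\|=N^{-s}\left\|d^{s}\sigma\right\|\le N^{-s},$$
and by the hypothesis $\left\|d_t\sigma\right\|\ge 1/2$,
$$\left\|d\sigma_{j}\right\|\ge \frac{1}{2N}.$$
It therefore suffices that $N^{-s}\le \frac{1}{12N}$, i.e. $N^{s-1}\ge 12$. This holds for all $s\ge2$ because $N\ge 100$, which follows from $\varepsilon\le\frac1{100}$.

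The only delicate point is the case of non-integer $r$. There the top-order term of the $C^{r}$ norm is a Hölder-type seminorm of $d^{\lfloor r\rfloor}\sigma$ of exponent $r-\lfloor r\rfloor$. Under affine rescaling by $1/N$ this seminorm scales by $N^{-r}$, so the same inequality $N^{1-r}\le \frac1{12}$ is needed. This can fail when $r$ is close to $1$. If it does, we refine the subdivision to $N'=\lceil 12^{1/(r-1)}\rceil\cdot N$ pieces. That gives a number of pieces depending only on $r$ and $\varepsilon$, which is all the application in Theorem~\ref{main theorem of decomposition of volume growth of submanifolds} uses, where only a uniformly bounded number of pieces matters. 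In the integer-order convention adopted in the paper's definition of $\left\|\cdot\right\|_{C^r}$ this step does not arise, and exactly $\left\lceil \frac{1}{\varepsilon}\right\rceil$ curves suffice.
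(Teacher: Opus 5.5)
Your proof is correct and follows essentially the same route as the paper. You use the same uniform subdivision into $\lceil 1/\varepsilon\rceil$ affine pieces of length $1/N$ (the paper's $b_N=1-\delta$ coincides with your $(N-1)/N$), the same first-derivative bound $\le 1/N\le\varepsilon$, and the same ratio estimate $2N^{1-s}\le 2/100<1/6$ coming from $\|d_t\sigma\|\ge 1/2$. Your H\"older-seminorm digression is unnecessary under the paper's integer-order definition of $\|\cdot\|_{C^r}$, as you yourself point out.
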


	\begin{proof}
		Take $N = \left\lceil \frac{1}{\varepsilon} \right\rceil$ and define the step size $\delta = \frac{1}{N}$. Note that by assumption $\varepsilon \le \frac{1}{100}$, we have $\delta \le \varepsilon \le \frac{1}{100}$.

		Define the translation constants $b_j$ for $j=1, \dots, N$ as follows:
		$$ b_j = \begin{cases} (j-1)\delta, & 1 \le j < N \\ 1 - \delta, & j = N \end{cases} $$
		The affine maps $\theta_j: [0,1] \to [0,1]$ are given by $\theta_j(t) = \delta t + b_j$. By construction, $\theta_j([0,1]) = [b_j, b_j+\delta] \subset [0,1]$, and the union of these images covers $[0,1]$ (with the last interval $[1-\delta, 1]$ potentially overlapping with the previous one).

		Let $\sigma_j = \sigma \circ \theta_j$. We verify the $\varepsilon$-boundedness for each $1 \le j \le N$:

		\begin{enumerate}
			\item \textbf{First derivative bound.}

			Since $\left\|d_t\sigma\right\| \le 1$ and $\theta_j' = \delta$, we have
			$$ \left\|d\sigma_j\right\| = \sup_{t \in [0,1]} \left\|d_t\sigma \cdot \theta_j'\right\| \le 1 \cdot \delta \le \varepsilon. $$

			\item \textbf{Higher order derivative bound.}

			For $2 \le s \le r$, the chain rule gives $\left\|d^s\sigma_j\right\| \le \delta^s \left\|d^s\sigma\right\| \le \delta^s$. Recalling the lower bound $\left\|d_t\sigma\right\| \ge 1/2$, we have $\left\|d\sigma_j\right\| \ge \frac{1}{2}\delta$. Consequently,
			$$ \frac{\left\|d^s\sigma_j\right\|}{\left\|d\sigma_j\right\|} \le \frac{\delta^s}{\frac{1}{2}\delta} = 2\delta^{s-1} \le 2\delta \le \frac{2}{100} < \frac{1}{6}, $$
			which satisfies the boundedness condition for all $2 \le s \le r$.
		\end{enumerate}
		Therefore, each $\sigma_j$ is $\varepsilon$-bounded, and $\sigma([0,1]) = \bigcup_{j=1}^N \sigma_j([0,1])$.
	\end{proof}

	The following example shows that the local volume growth rate can still be positive in a small neighborhood of a repeller. Here the positivity of the local volume growth is a purely local phenomenon: although the curve is exponentially restricted to an increasingly small neighborhood of the repeller, the rapidly growing oscillation frequency produces a strong transverse amplification of derivatives. The resulting growth reflects a subtle balance between geometric restriction (uniform expansion) and oscillatory stretching.
	\begin{Example}\label{example repeller positive volume growth}
Let $A = \operatorname{diag}(a, 3)$ with $a > 1$, and let $\sigma: [0, 1] \to \mathbb{R}^2$ be a $C^2$ curve defined by $\sigma(x) = (x, x^5 \sin(1/x))$. The volume (i.e., arc length) growth rate restricted in $[-1,1]^2$ \footnote{The restriction to $[-1,1]^2$ is meant to mimic the restriction to a dynamical ball, which is justified here by the uniform expansion of the map.}, defined by
\[ {\rm Vol}(A, \sigma)_{[-1,1]^2} := \limsup_{n \to +\infty} \frac{1}{n} \log \operatorname{Length}(A^n \sigma \cap [-1,1]^2), \]
is given by:
\[ {\rm Vol}(A, \sigma)_{[-1,1]^2} =
\begin{cases}
	\frac{1}{5} \log 3, & 1 < a \le 3^{1/5} \\
	\log 3 - 4 \log a, & 3^{1/5} < a < 3^{1/4} \\
	0, & a \ge 3^{1/4}
\end{cases} \]
In particular, the growth rate is positive if and only if $1 < a < 3^{1/4}$.
	\end{Example}

	\begin{proof}
		\textbf{Step 1. Parametrization and derivatives.}

		Inside $[-1,1]^2$, the image curve $A^n\sigma$ admits the parametrization
		$$
		\gamma_n(u)=(u,g_n(u)), \qquad u\in[0,1],
		$$
		where
		$$
		g_n(u)=3^n a^{-5n}u^5\sin\!\left(\frac{a^n}{u}\right).
		$$

		The admissible domain to have $\gamma_n(u)=(u,g_n(u))\in[-1,1]^2$ is determined by
		$$
		u\in[0,1],\qquad |g_n(u)|\le1.
		$$
		Denote by $I_n$ the set of $u$ satisfying these conditions. The length of
		$A^n\sigma\cap[-1,1]^2$ is
		$$
		L_n:=\int_{I_n}\sqrt{1+|g_n'(u)|^2}\,du.
		$$
		A direct computation gives
		$$
		g_n'(u)
		=5\cdot3^n a^{-5n}u^4\sin\!\left(\frac{a^n}{u}\right)
		-3^n a^{-4n}u^3\cos\!\left(\frac{a^n}{u}\right).
		$$

		\medskip
		\textbf{Step 2. Case analysis.}

	The figure \ref{example} below illustrates how the local volume growth transitions from bounded oscillations to amplified and eventually explosive behavior as the parameter $a$ varies across different regimes.
		\begin{figure}[htbp]
			\centering
			\includegraphics[width=1\textwidth]{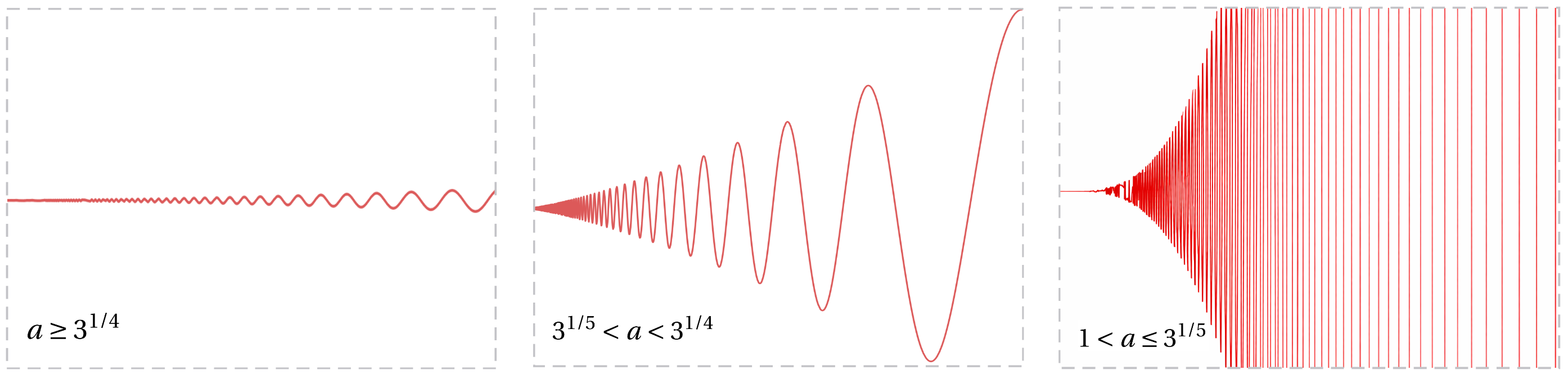}
			\caption{Local volume growth for different value of $a$}
			\label{example}
		\end{figure}

		\smallskip

		\begin{itemize}
			\item 	\emph{Case 1: $a\ge 3^{1/4}$.}

			In this range, $3a^{-4}\le1$. For $u\in[0,1]$, we have
			$$
			|g_n(u)|\le (3a^{-5})^n\leq 1,
			$$
			which implies $I_{n}=[0,1]$. Also note
			$$
			|g_n'(u)|
			\le 5\cdot3^n a^{-5n}+3^n a^{-4n}
			=5(3a^{-5})^n+(3a^{-4})^n.
			$$
			As $a>1$ and $3a^{-4}\le1$, there exists $C>0$ such that
			$|g_n'(u)|\le C$ for all $n$ and $u\in I_n$. Hence
			$$
			1\le \sqrt{1+|g_n'(u)|^2}\le\sqrt{1+C^2},
			$$
			which implies
			$$
			{\rm Vol}(A, \sigma)_{[-1,1]^2}=\limsup_{n\to+\infty}\frac1n\log L_n\leq0.
			$$
			Since $I_{n}=[0,1]$, the image curve has length at least one. Hence $${\rm Vol}(A, \sigma)_{[-1,1]^2}=0$$

			\item  \emph{Case 2: $3^{1/5}<a<3^{1/4}$.}

			In this range, $3a^{-5}<1$ and $3a^{-4}>1$. Since
			$$
			|g_n(u)|\le (3a^{-5})^n\leq 1,
			$$
			we have $I_n=[0,1]$ as in the previous case.

			Note $$5\cdot3^n a^{-5n}u^4\sin\!\left(\frac{a^n}{u}\right)\leq 5\cdot3^n a^{-5n}\to 0.$$
			As a consequence, for all large $n$, $$\sqrt{1+|g_n'(u)|^2}\le1+|g_n'(u)|\leq 2+ 3^n a^{-4n}.$$ Hence $${\rm Vol}(A, \sigma)_{[-1,1]^2}\leq \log3-4\log a.$$ It remains to prove the reverse inequality.

			 For all large $n$, by Lemma \ref{Calculus results} with $b=1$, $$L_{n}\geq \int_{0}^{1}|g_n'(u)|du\geq \int_{0}^{1}\Big|3^n a^{-4n}u^3\cos\!\left(\frac{a^n}{u}\right)\Big|du-1\geq 3^n a^{-4n}\cdot \frac{1}{2\pi(1+2\pi)^{4}} -1.$$ Hence $${\rm Vol}(A, \sigma)_{[-1,1]^2}\geq \log3-4\log a.$$

			\item 	\emph{Case 3: $1<a\le 3^{1/5}$.}

			In this range, $3a^{-5}\ge1$.
			For a lower bound, consider $[0,u_n]$ with
			$$
			u_n:=(3^{n}a^{-5n})^{-1/5}=3^{-n/5}a^n.
			$$
			Then $|g_n(u)|\le1$ on $[0,u_n]$, hence $[0,u_n]\subset I_n$. Also note $$\sqrt{1+|g_n'(u)|^2}\geq |g_n'(u)|\geq  \left|3^n a^{-4n}u^3\cos\!\left(\frac{a^n}{u}\right)\right|-5\cdot3^n a^{-5n}u^4.$$ Applying Lemma \ref{Calculus results} with $b=u_n=3^{-n/5}a^n$, we have $$\int_{0}^{u_{n}}\left|3^n a^{-4n}u^3\cos\!\left(\frac{a^n}{u}\right)\right|du\geq \frac{1}{2\pi\left(1+2\pi\right)^{4}}\cdot 3^{n/5}.$$ Hence $$L_{n}\geq \int_{0}^{u_{n}}\left|3^n a^{-4n}u^3\cos\!\left(\frac{a^n}{u}\right)\right|du-\int_{0}^{u_{n}}5\cdot3^n a^{-5n}u^4du\geq \frac{1}{2\pi\left(1+2\pi\right)^{4}}\cdot 3^{n/5}-1$$ which gives $${\rm Vol}(A, \sigma)_{[-1,1]^2}\geq \frac15\log3.$$

			It remains to prove the reverse inequality. On $[0, u_{n}]$, $$\sqrt{1+|g_n'(u)|^2}\leq |g_n'(u)|+1\leq 3^n a^{-4n}u^3+5\cdot3^n a^{-5n}u^4 +1.$$ By a direct computation,  $$\limsup_{n \to +\infty}\frac{1}{n}\log\int_{0}^{u_{n}}\left(3^n a^{-4n}u^3+5\cdot3^n a^{-5n}u^4 +1\right)du \leq \frac15\log3.$$ So we only have to focus on the volume growth on the interval $[u_{n},1]$.

			We next estimate the number of intervals of monotonicity for $g_n$ on $[u_{n},1]$. Note that the critical points (i.e., $g'_{n}(u)=0$) are determined by
			\[
			5u\sin\!\frac{a^n}{u}
			=
			a^n\cos\!\frac{a^n}{u}.
			\]
			Introducing the variable $t=a^n/u$, this equation is equivalent to
			\[
			\tan t=\frac{t}{5}.
			\]
			For $u\in [u_{n},1]$, we have
			\[
			t\in\Bigl[a^n,\frac{a^n}{u_n}\Bigr]=[a^n,3^{n/5}].
			\]
			On each interval $(k\pi-\frac{\pi}{2},k\pi+\frac{\pi}{2})$, the function
			$\tan t - t/5$ is strictly increasing, hence the above equation admits at most one solution.
			Therefore the number of solutions in $[a^n,3^{n/5}]$ is bounded by
			\[
			\frac{3^{n/5}-a^n}{\pi}+1 \le \frac{3^{n/5}}{\pi}+1.
			\]
			Each critical point contributes at most one change of monotonicity, so the number of intervals of monotonicity on $[u_{n},1]$ is bounded by
			\[
			\frac{3^{n/5}}{\pi}+2.
			\]

			Restricted on $[-1,1]^2$, the length of the curve on each interval of monotonicity is at most $4$. Hence the volume growth rate on the interval $[u_{n},1]$ restricted on $[-1,1]^2$ is bounded above by also $\frac15\log3.$ This proves the reverse inequality.

		\end{itemize}

	\end{proof}

		\begin{Lemma}\label{Calculus results}
		For any $a,b>0$ and $n\in\mathbb{N}$,
		$$
		\int_{0}^{b} u^{3}\left|\cos\frac{a^{n}}{u}\right|\,du
		\ge
		\frac{a^{4n} b^{4}}{2\pi\left(a^{n}+2\pi b\right)^{4}}.
		$$
	\end{Lemma}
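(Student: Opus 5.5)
The plan is to change variables so that the oscillation becomes periodic, and then compare the oscillatory integral with a monotone one, one period at a time. Write $c:=a^{n}>0$ and $T:=c/b>0$. Substituting $t=c/u$ (so $u=c/t$ and $du=-c\,t^{-2}\,dt$) turns the left-hand side into
$$\int_{0}^{b}u^{3}\Bigl|\cos\frac{c}{u}\Bigr|\,du=c^{4}\int_{T}^{+\infty}\frac{|\cos t|}{t^{5}}\,dt.$$
On the right-hand side, $\frac{c^{4}b^{4}}{2\pi(c+2\pi b)^{4}}=\frac{c^{4}}{2\pi(T+2\pi)^{4}}$. So the claim reduces to the one-variable inequality
$$\int_{T}^{+\infty}\frac{|\cos t|}{t^{5}}\,dt\ \ge\ \frac{1}{2\pi(T+2\pi)^{4}}\qquad\text{for all } T>0.$$

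To prove it, split $[T,+\infty)$ into consecutive intervals $J_k:=[T+k\pi,\,T+(k+1)\pi]$ for $k\ge0$. The function $|\cos t|$ has period $\pi$, so $\int_{J_k}|\cos t|\,dt=2$. Since $t^{-5}$ is decreasing, it is at least $(T+(k+1)\pi)^{-5}$ on $J_k$. Hence
$$\int_{J_k}\frac{|\cos t|}{t^{5}}\,dt\ \ge\ \frac{2}{(T+(k+1)\pi)^{5}}.$$
Again by monotonicity, $(T+(k+1)\pi)^{-5}\ge \frac1\pi\int_{J_{k+1}}t^{-5}\,dt$, because $t^{-5}\le (T+(k+1)\pi)^{-5}$ on $J_{k+1}$. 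Summing over $k\ge0$ gives
$$\int_{T}^{+\infty}\frac{|\cos t|}{t^{5}}\,dt\ \ge\ \frac{2}{\pi}\int_{T+\pi}^{+\infty}t^{-5}\,dt=\frac{1}{2\pi(T+\pi)^{4}}\ \ge\ \frac{1}{2\pi(T+2\pi)^{4}}.$$
Multiplying by $c^{4}$ and undoing the substitution yields the stated bound.

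The only step that needs care is this comparison with the monotone integral. The first attempt, bounding by a single window $[T,T+2\pi]$, only gives something of order $(T+2\pi)^{-5}$. That is too weak by a factor of $T$ when $T$ is large. Summing over all periods and shifting the comparison by one period, as above, recovers the $(T+\pi)^{-4}$ rate and avoids this loss. Everything else is a routine substitution.
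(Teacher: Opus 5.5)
Your proof is correct and takes essentially the same route as the paper: the substitution $t=a^{n}/u$, splitting into windows of length $\pi$ on which $\int|\cos t|\,dt=2$, bounding $t^{-5}$ by its value at the right endpoint, and comparing the resulting sum with $\frac{2}{\pi}\int t^{-5}\,dt$ shifted by one period. The only difference is cosmetic. You anchor the windows at $T=a^{n}/b$, using the $\pi$-periodicity of $|\cos t|$, whereas the paper uses the multiples $k\pi$ with $k\ge\lceil a^{n}/(b\pi)\rceil$; this avoids the ceiling and gives the slightly sharper intermediate bound $(T+\pi)^{-4}$.
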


	\begin{proof}
		By the change of variables $t=a^{n}/u$, we obtain
		$$
		\int_{0}^{b} u^{3}\left|\cos\frac{a^{n}}{u}\right|\,du
		=
		a^{4n}\int_{a^{n}/b}^{+\infty} t^{-5}|\cos t|\,dt .
		$$

		Let
		$$
		k_{0}:=\left\lceil \frac{a^{n}}{b\pi}\right\rceil .
		$$
		Since $t^{-5}|\cos t|\ge 0$, we have
		$$
		\int_{a^{n}/b}^{\infty} t^{-5}|\cos t|\,dt
		\ge
		\sum_{k\ge k_{0}}
		\int_{k\pi}^{(k+1)\pi} t^{-5}|\cos t|\,dt .
		$$
		For each $k\ge k_{0}$, using that $t^{-5}$ is decreasing on $[k\pi,(k+1)\pi]$ and
		$$
		\int_{k\pi}^{(k+1)\pi}|\cos t|\,dt=2,
		$$
		we obtain
		$$
		\int_{k\pi}^{(k+1)\pi} t^{-5}|\cos t|\,dt
		\ge
		2\,((k+1)\pi)^{-5}.
		$$
		Therefore,
		$$
		\int_{a^{n}/b}^{\infty} t^{-5}|\cos t|\,dt
		\ge
		2\sum_{k\ge k_{0}}((k+1)\pi)^{-5}
		\ge
		\frac{2}{\pi}\int_{(k_{0}+1)\pi}^{\infty} t^{-5}\,dt
		=
		\frac{1}{2\pi}((k_{0}+1)\pi)^{-4}.
		$$
		Since $(k_{0}+1)\pi\le a^{n}/b+2\pi$, it follows that
		$$
		\int_{a^{n}/b}^{\infty} t^{-5}|\cos t|\,dt
		\ge
		\frac{1}{2\pi}(a^{n}/b+2\pi)^{-4}.
		$$
		Consequently,
		$$
		\int_{0}^{b} u^{3}\left|\cos\frac{a^{n}}{u}\right|\,du
		\ge
		\frac{a^{4n} b^{4}}{2\pi\left(a^{n}+2\pi b\right)^{4}}
		.$$
	\end{proof}

\section*{Acknowledgements}
		
		We are grateful to Mingyang Xia from Dalian University of Technology for many helpful and stimulating discussions during the early stages of this work. We also thank J\'{e}r\^{o}me Buzzi from Universit\'{e} Paris-Saclay for many valuable comments.

			Yuntao Zang is supported by National Key R\&D Program of China (2022YFA1005802) and NSFC (12201445, 12471186).

\vskip 5pt

\flushleft{\bf Yuntao Zang} \\
\small Soochow College,  Soochow University, Suzhou, 215006, P.R. China\\
\textit{E-mail:} \texttt{ytzang@suda.edu.cn}\\

\end{document}